\newtheorem{thm}{Theorem}
\newtheorem{rem}{Remark}
\theoremstyle{definition}
\begin{document}

\begin{frontmatter}



\title{\Large Dynamics of interval maps generated by erasing substitutions \tnoteref{l1}}

\author{A. Della Corte\fnref{adc}}
\ead{alessandro.dellacorte@unicam.it}
\fntext[adc]{Mathematics Division, School of Sciences and Technology, University of Camerino (Italy).
Address: via Madonna delle Carceri 9A, Camerino (MC), Italy. ORCID: 0000-0002-1782-0270 (corresponding author).}

\author{M. Farotti\fnref{mf}}
\ead{marco.farotti@studenti.unicam.it.}
\fntext[mf]{Mathematics Division, School of Sciences and Technology, University of Camerino (Italy).
Address: via Madonna delle Carceri 9A - Camerino (MC), Italy.}


\begin{abstract}
\noindent We study the discrete dynamics of interval maps generated by the action of erasing block substitutions on the binary expansion. After establishing some general properties of these maps, we categorize erasing block substitutions in a hierarchy of classes displaying progressively stronger erasing character. We investigate how this affects the dynamics of the corresponding interval maps, showing that the richest dynamical behavior (Devaney and Li-Yorke chaos, infinite topological entropy) is achieved at a precise step in this hierarchy, which we name completely erasing substitutions. 
\end{abstract}



\begin{keyword}

Topological dynamics \sep Erasing substitutions \sep Devaney chaos \sep Li-Yorke chaos \sep Topological entropy.



\MSC[2020] 26A18 \sep 37B40 \sep 37B10 \sep 37B20.

\end{keyword}

\end{frontmatter}


\newtheorem{defn}{Definition}[section]
\newtheorem{corol}{Corollary}
\newtheorem{lem}{Lemma}[section]
\section{Introduction}
\label{intr}

\noindent The study of dynamical systems has greatly benefited from the symbolic approach, and in particular from substitutive dynamics. Within this framework, several ideas have found their neat, ideal formulation, generating techniques and results that proved fruitful, for instance, in ergodic theory, chaos theory, number theory  and crystallography (a standard general reference on substitutive dynamical systems is \cite{fogg2002substitutions}). While substitutions and block substitutions are generally assumed to map symbols or words to nonempty words, recently some attention has been devoted to the properties of \textit{erasing} substitutions, which include the empty word in their  range \cite{durand2010cobham,durand2009syndeticity,reidenbach2011restricted}.
This interest is understandable, coming from a natural extension of the somewhat narrow original concept of substitution, and it will probably become wider, since a variety of real-world processes that can be potentially formalized in a symbolic dynamical context easily include erasing phenomena - think about DNA transcription and coding, information transmission with errors, model reduction for physical systems in which some states are intrinsically negligible.

\noindent In this paper, we focus on the dynamical properties of maps defined by the action of erasing block substitutions on the binary expansion of reals in the unit interval (a model case of this type is studied in detail in \cite{dellacorte2021simplest}). The systematic use of base 2 was chosen for simplicity. Working in the real context involves some technicalities, mainly due to the ambiguity of the representation of dyadic rationals and to the identification of the finite words $w0^m$ ($m\in\mathbb{N}_0$) with the infinite word $w0^\infty$. However, once formalized appropriately, the maps generated in this way look like natural hunting ground for interesting dynamical phenomena, since they lie at the current boundary of the fast-expanding domain of topological dynamics. Indeed, they are typically Baire-1, non-Darboux functions, and therefore they represent particularly simply-defined examples from classes of objects for which the study of topological dynamical properties has begun in quite a recent past (for instance, \cite{korczak2015} mostly focuses on Darboux, Baire-1 maps, while \cite{natkaniec2010pawlak,loranty2019almost} consider almost continuous functions and \cite{steele2019,steele2017,steele2018} study topologically typical Baire-1 maps). 

\noindent Our main purpose was to start the search for connections between algorithmic properties of erasing substitutions and  dynamical properties of the corresponding interval maps. Hopefully, understanding these connections may help erasing substitutions to play a role, in the investigation of highly discontinuous dynamical systems,  similar to the fruitful role played by non-erasing substitutions in the classical continuous (or piecewise continuous) case. 

\noindent The paper is organized as follows: definitions and preliminary results are provided in Section \ref{prelim}, where, in particular, erasing substitutions are categorized in a hierarchy of classes displaying progressively stronger erasing character. In Section \ref{generall} we study some general analytical and topological properties shared by the maps generated by the members of all erasing classes if they verify some form of surjectivity (named here \textit{optimality condition}). Although not particularly difficult, these results require some care (and space) to cover all the cases generated by the ambiguity of binary representation or reals. In Section \ref{dynamicss} we address the more challenging task of establishing links between the erasing class of a substitution and the dynamical behavior of the corresponding interval map; the main results are summarized in the table at the end of the section. Finally, in Section  \ref{furtherr} some possible directions for further investigation are highlighted.

\section{Preliminaries}
\label{prelim}
\noindent We set $\mathbb{I}=[0,1]$ and let $\mathbb{Q}_2=\mathbb{I}\cap \left\lbrace\frac{n}{2^{k}}:n,k\in\mathbb{N}_0\right\rbrace$ denote the dyadic rationals in $\mathbb{I}$. We set $\mathcal{Q}_2=\mathbb{Q}_2\setminus\{0,1\}$ and $\mathcal{Q}_2^0=\mathcal{Q}_2\cup\{0\}$. We indicate by $\left\lbrace 0,1\right\rbrace^*$ the set of all finite binary words and by $\left\lbrace 0,1\right\rbrace^{\infty}$ the set $\left\lbrace 0,1\right\rbrace^*\cup \left\lbrace 0,1\right\rbrace^{\omega}$ of all finite or infinite binary words. We let $\epsilon$ denote the empty word and we set $\left\lbrace 0,1\right\rbrace^+=\left\lbrace 0,1\right\rbrace^*\setminus \left\lbrace\epsilon\right\rbrace$. For $n\in\mathbb{N}$, we set 
$$\{0,1\}^{\le n}=\bigcup_{\mathbb{N}\ni i\le n}  \{0,1\}^i\ \ \ \text{,}\ \ \ \{0,1\}^{\ge n}=\left(\bigcup_{\mathbb{N}\ni i\ge n} \{0,1\}^i\right)\cup \{0,1\}^\omega$$ 

\noindent For $n\in\mathbb{N}$ and for $u\in \left\lbrace0,1\right\rbrace^{\ge n}$, we let $u_n$ denote the $n$-th digit of the binary word $u$ and for $u\in\{0,1\}^n$ we indicate by $\left|u\right|$ the length of the word $u$, i.e. the non-negative integer $n$, while we set $|u|=\infty$ if $u\in\{0,1\}^\omega$. For $v\in\{0,1\}^n$ and $m\in\mathbb{N}$ we let $v^m$ indicate the $m$-th power of the word $v$, that is the word $u\in\{0,1\}^{mn}$ such that $u_{hn+r}=v_r$ for every positive integer $r\le n$ and every non-negative integer $h< m$. We set $v^0=\epsilon$. We will use natural numbers as subscripts (superscripts) in round parentheses when they do not indicate digit number (word power). 

\noindent We will write concatenation of words using a multiplicative notation. Therefore, If $u\in\left\lbrace0,1\right\rbrace^n$ and $v\in \left\lbrace0,1\right\rbrace^\infty$, we write $uv$ for the word $u_1u_2\dots u_nv_1v_2\dots$ . Notice that, with concatenation as internal operation, the sets $\{0,1\}^*$ and $\{0,1\}^+$ have respectively the structure of a free monoid and a free semigroup. Therefore, exploiting associativity, if $\left(u^{(n)}\right)_{n=1,\dots, N}$ is a finite sequence of finite binary words, we let 
$\prod_{i=1}^N u^{(i)}=u^{(1)}u^{(2)}\dots u^{(N)}$
denote their concatenation. Moreover, if $\left(u^{(n)}\right)_{n\in\mathbb{N}}$ is an infinite sequence of finite binary words, we let 
$\prod_{i=1}^\infty u^{(i)}=u^{(1)}u^{(2)}\dots$ denote their infinite concatenation. We let $u^{\infty}$ denote the infinite concatenation $uuu\dots$ of $u$ with itself. 

\noindent We say that $v\in\{0,1\}^\infty$ is a \textit{prefix} of a (possibly infinite) word $w=a_1a_2\dots$ if  either $v=w \in \{0,1\}^\omega$ or $w \in \{0,1\}^{\ge k}$ and there is a positive integer $k$ such that $v=a_1\cdots a_k$. We say that $v\in\{0,1\}^*$ is a \textit{suffix} of a finite word $w=a_1\dots a_n$ if there is a positive integer $k< n$ such that $v=a_{n-k}a_{n-k+1}\dots a_n$.

\begin{defn}
For $k\in\mathbb{N}$, we say that $w\in\{0,1\}^\infty$ has $u\in\{0,1\}^\infty$ as a $k$-factor if there is a non-negative integer $n$ such that $w$ has a prefix of the form $pu$, with $|p|=nk$. Notice that $|u|$ can also be $\infty$.
\label{kfactor}
\end{defn}

\noindent If $w\in\{0,1\}^\infty$, we let $0.w$ denote the real number $\sum_{i=1}^{|w|} \frac{w_i}{2^i}$. We define \textit{binary expansion} of $x\in\mathbb{I}$ any word $u\in\{0,1\}^\infty$ such that $x=0.u$, so that $\mathcal{Q}^0_2$ is precisely the subset of $\mathbb{I}$ whose points do not have a unique binary expansion. If $x\in \mathbb{I}\setminus\{0\}$, we let $\widetilde{x}\in \left\lbrace0,1\right\rbrace^\omega$ denote the unique infinite binary expansion of $x$ not ending with $0^\infty$, so that $x=0.\widetilde{x}$.  Notice that $\widetilde{x}$ is the unique binary expansion of $x$ if and only if $x\notin\mathcal{Q}_2^0$. For $w\in\{0,1\}^*$, we indicate by $[w]$ the cylinder set generated by $w$, that is the set $\{x\in\mathbb{I}: x=0.wv\ \text{for some}\ v\in\{0,1\}^\infty\}$. Notice that $[\epsilon]=[0,1]$.  We indicate the standard Lebesgue measure of a measurable set $A\subseteq \mathbb{I}$ by $m(A)$.

\begin{defn} Herein by \emph{simple substitution} we mean a map $\sigma:\lbrace0,1\rbrace \rightarrow \lbrace 0,1\rbrace^*$. For every integer $k\ge 2$, by \emph{$k$-block substitution} we mean a map $\sigma:\lbrace0,1\rbrace^k \rightarrow \lbrace 0,1\rbrace^*$.
\end{defn}
\noindent We only consider the case in which there is exactly one block mapped to empty word:
\begin{defn}
For every integer $k\ge 2$, by  \emph{erasing $k$-block substitution} we mean  a $k$-block substitution $\sigma $ such that there exists a unique $ w\in \lbrace0,1\rbrace^k : \sigma(w)=\epsilon$. 
\end{defn}

\noindent We will adopt the following conventions. The integer $k>2$ will always be the length of the blocks transformed by the erasing substitution $\sigma$. By $w_{(1)},w_{(2)},\dots,w_{(2^k)}$ we will indicate the lexicographic enumeration of $\{0,1\}^k$. For every $i\in\{1,\dots,2^k\}$ we set: 
\begin{equation} \label{sigma}
\sigma \left( w_{(i)}\right)=v_{(i)},\quad v_{(j)}=\epsilon,\quad v_{(i)} \in\{0,1\}^+ \text{ for } i\neq j 
\end{equation} 
We will always indicate by $w_{\epsilon}$ the word $w_j\in\{0,1\}^k$ such that $\sigma(w_j)=\epsilon$.

\begin{defn} \label{o.c}
We say that $\sigma$ verifies the \emph{optimality condition} (in short OC) if, for every $w\in \{0,1\}^\omega$, there exists a sequence of integers $(h_i)_{i\in\mathbb{N}}\in \{1,2,\dots,2^k\}^\mathbb{N}$ such that 
\begin{equation}
w=\prod_{i=1}^\infty v_{(h_i)}
\label{OC1}
\end{equation}
\end{defn}
\begin{rem}
Notice that OC implies also that, for every \emph{finite} binary word $w$, there is a suitable concatenation of nonempty words $v=v_{(h_1)}v_{(h_2)}\dots v_{(h_n)}$ such that $w$ is a prefix of $v$. 
\label{optimfinite}
\end{rem}
\noindent We will assume always $\sigma(\epsilon)=\epsilon$. The $k$-block substitution $\sigma$ induces then naturally a map over $\mathcal{W}_{k}=\left(\cup_{n\in\mathbb{N}_0}\{0,1\}^{nk}\right)\cup\{0,1\}^\omega$ if we set, for every $w\in \mathcal{W}_k$, $$\sigma(w)=\prod_{i=0}^{\frac{|w|}{k}-1} \sigma(w_{ki+1}w_{ki+2}\dots w_{k(i+1)}),$$

\noindent where the concatenation index has to be intended to be up to $\infty$ if $w\in\{0,1\}^\omega$. We can exploit this to define a function $f_\sigma:\mathbb{I}\rightarrow\mathbb{I}$, which is determined by the symbolic action of $\sigma$ on the binary expansion of real numbers in the unit interval. Assuming $w_\epsilon\ne 1^k$, we set:
\begin{equation}\label{fsigma__}
f_\sigma(x)=
\begin{cases}
\ \sum _{h=1}^{|\sigma(\widetilde{x})|} \frac{\left(\sigma(\widetilde{x})\right)_h}{2^h}=0.\sigma(\widetilde{x})\quad&\text{if}\ x\in(0,1]\ \text{and}\  \widetilde{x} \ne w^\infty_{\epsilon}\\
\\
\ 0&\text{if}\ \widetilde{x}=w^\infty_{\epsilon}\ \text{or $x=0$}
\end{cases}
\end{equation}
\noindent In the following, we will have to pay attention to two technicalities: the ambiguity in the binary representation of the elements of $\mathcal{Q}_2$ and the fact that the erasing character of $\sigma$ implies that the $\sigma$-image of some infinite words can be finite. Defining the set $$\mathcal{E}^{\sigma}=\{ x\in  \mathbb{I}\cap \mathbb{Q}:x=0.vw^\infty_{\epsilon}\ \text{for some}\ v\in\{0,1\}^{nk},\ n\in\mathbb{N}_0\},$$ 
it follows immediately that $|\sigma(\widetilde{x})|<\infty\implies x\in \mathcal{E}^{\sigma}$, and the converse implication also holds unless $w_\epsilon=0^k$.

\begin{rem}
If $w_\epsilon=1^k$, a complementary map $f_\sigma'$ can be introduced, replacing $\widetilde{x}$ by the unique infinite binary representation of $x\in\mathbb{I}$ not ending in $1^\infty$. In this case, the property of $f_\sigma'$ are analogous to that of $f_\sigma$ in the particular case $w_\epsilon=0^k$ (with the difference that the properties of left/right limits on $\mathcal{Q}_2$ for $f_\sigma$ correspond to properties of right/left limits for $f_\sigma'$). For this reason we will assume $w_\epsilon\ne 1^k$ throughout.
\end{rem}

\noindent A particular subset of $k$-block substitutions are those which can be rewritten using $k$ (generally distinct) simple substitutions, acting on the elements of a word $u$ according to the congruence class modulo $k$ of their indexes. More precisely, we introduce the following
\begin{defn}\label{defialter}
We say that the $k$-block substitution $\sigma$ is \textit{alternating} if there exist $k$ simple substitutions $\sigma_1,\sigma_2,\dots \sigma_k$ such that the map $\sigma_e$ defined below is an extension of $\sigma$ on $\{0,1\}^\infty$: 
\begin{equation}
\sigma_e(u)=\prod_{j=0}^{n-1} \left(\prod_{i=1}^k \sigma_i(u_{i+jk})\right)\prod_{i=1}^m \sigma_i(u_{i+nk})
\label{alternating}
\end{equation}
where $|u|=nk+m$ ($n,m\in\mathbb{N}_0$, $m<k$) and the first (last) product has to be taken as empty if $n=0$ ($m=0$). 
\end{defn}
\noindent Therefore, if $\sigma$ is alternating, we can use Eq.\eqref{alternating} to extend the definition of $\sigma$ to a morphism over all $\{0,1\}^{\infty}$ setting $\sigma\equiv \sigma_e$. We will exploit this fact in the following so that, when dealing with an alternating substitution $\sigma$, we will write simply $\sigma(u)$ (defined through the right hand side of Eq.\eqref{alternating}) for words of any length. Alternating substitution rules are quite well investigated and have also been generalized (see for instance \cite{culik1992iterative}). We will see that assuming $\sigma$ alternating makes significantly less complicated to deduce combinatorially its dynamical properties. However, for the sake of generality, we will not assume always this property. 
\begin{rem}
When we do not assume $\sigma$ alternating and $v\in\{0,1\}^{nk+m}$ \emph{(}$0<m<k$\emph{)}, we may write simply $\sigma(v)$ for the word $\sigma(v_1\dots v_{nk})$, i.e. we implicitly drop the last digits of $v$ so as to truncate it to a word in $\{0,1\}^{nk}$. Of course this crude sort of ``extension'' of $\sigma$ to $\{0,1\}^*$ is not a morphic map, and this convention will be used simply to lighten the notation in cases in which we are not interested in what happens after $\sigma(v_1\dots,v_{nk})$.
\label{convention}
\end{rem}

\noindent The next step consists in defining some properties of $\sigma$, strengthening it simply being erasing, which entail additional dynamical properties for the real map $f_\sigma$ associated to it. To do so we introduce some tools needed to settle words of any given length.

\noindent Let $w \in \lbrace0,1\rbrace^+$ be such that $n k< \left| w \right|<(n+1) k$ for some $n\in\mathbb{N}_0$. A word $u \in \lbrace0,1\rbrace^*$ such that $u=wv$ and $\left| u \right|=(n+1) k$ is said a \emph{k-rounding} of $w$; in this case $v$ is said a \emph{k-extension} of $w$. If $\left|w\right|=n k$ ($n\in \mathbb{N}$) we assume that $\epsilon$ and $w$ are, respectively, the unique $k$-extension and the unique $k$-rounding of $w$. We set $$e(w)= \lbrace v: v \mbox{ is a $k$-extension of }w\rbrace\ ,\ r(w)=\lbrace v: v \mbox{ is a $k$-rounding of }w\rbrace.$$

\noindent For $W\subseteq \lbrace 0,1 \rbrace^* $, we set $$e(W)=\bigcup_{w\in W} e(w)\quad, \quad r(W)=\bigcup_{w\in W} r(w).$$
Let $w\in \lbrace 0,1 \rbrace^+$. We set $w^{[0]}=\lbrace w\rbrace$ and, for any positive integer $n$:  

$$w^{[n]}=\lbrace \sigma(v): v\in r(w^{[n-1]})\rbrace .$$

\vspace{0.2cm}

\noindent Aimed at studying the dynamical properties of $f_\sigma$, we want to classify block substitutions based on how ``markedly erasing'' they are. More precisely, let us give the following  

\begin{defn} A $k$-block substitution $\sigma$ is said to be:
\begin{enumerate}
\item  \emph{Strongly erasing} if for every $w \in \lbrace0,1\rbrace^*$ there is $n \in \mathbb{N}$ such that:
\begin{equation}
 \epsilon\in w^{[n]} 
\label{str_eras}
\end{equation} 

\noindent So, when $\sigma$ is strongly erasing, for every $w\in\{0,1\}^*$ there exist a positive integer $n$ and finite words $r_{(0)},\ldots, r_{(n-1)}$, which we call \textit{erasing $k$-roundings of $w$}, such that: i) $r_{(0)}$ is a $k$-rounding of $w$;
ii) for any integer $j$ such that $1 \le j\le n-1$,  $r_{(j)}$ is a $k$-rounding of $\sigma(r_{(j-1)})$; iii) $\sigma(r_{(n-1)})=\epsilon $.
The $k$-extensions corresponding to the $n$ $k$-roundings just defined, i.e. the $n$ words $\{e_{(i)}\}_{i=0,\ldots, n-1}$, each belonging to $\{0,1\}^{< k}$, such that $we_{(0)}=r_{(0)}$ and, for $1\le i\le n-1$, $\sigma(r_{(i-1)})e_{(i)}=r_{(i)}$, will be called \textit{erasing $k$-extensions} for $w$. Notice that there can be more than one set of erasing $k$-roundings and of erasing $k$-extensions for a given word.

\item \emph{Completely erasing} if it is alternating (in the sense of Definition \ref{defialter}) and, for every $w\in\{0,1\}^*$, there is $n \in \mathbb{N}$ such that:
\begin{equation}
\sigma^n(w)=\epsilon
\label{com_eras}
\end{equation}
\noindent The smallest positive integer $n$ verifying Eq.\eqref{com_eras}, indicated by $\epsilon(w)$, is called \emph{vanishing order} of $w$.

\item \emph{Boundedly erasing} if $\sigma$ is completely erasing and $\epsilon(\cdot)$ is bounded over $\{0,1\}^*$.
\end{enumerate}
\end{defn}

\begin{lem}
For every $k$-block substitution $\sigma$, we have:
\begin{equation*}
\text{boundedly erasing}\implies \text{completely erasing} \implies \text{strongly erasing} \implies \text{erasing}
\end{equation*}
\label{classes}
\end{lem}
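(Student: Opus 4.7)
The implication \emph{boundedly erasing} $\Rightarrow$ \emph{completely erasing} is immediate from the definition, since being boundedly erasing presupposes being completely erasing together with the additional requirement that the vanishing order $\epsilon(\cdot)$ be bounded. For \emph{strongly erasing} $\Rightarrow$ \emph{erasing}, consider any $w\in\{0,1\}^+$ and a corresponding sequence of $k$-roundings $r_{(0)},\ldots,r_{(n-1)}$ with $\sigma(r_{(n-1)})=\epsilon$. Since $|r_{(n-1)}|$ is a multiple of $k$, the word $r_{(n-1)}$ splits into consecutive $k$-blocks whose $\sigma$-images concatenate to $\epsilon$, so at least one such $k$-block is mapped to $\epsilon$. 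Uniqueness of this preimage is the standing assumption that every substitution considered in the paper is already erasing.

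The substantive implication is \emph{completely erasing} $\Rightarrow$ \emph{strongly erasing}. The plan is to show that for every $w\in\{0,1\}^*$ one can recursively pick $k$-extensions so that the corresponding $k$-roundings simulate iteration of the alternating extension $\sigma_e$ of $\sigma$. Writing $|w|=nk+m$ with $0\le m<k$, if $m=0$ then $r_{(0)}=w$ is itself a $k$-rounding and $\sigma(r_{(0)})=\sigma_e(w)$ trivially; otherwise I define the $k$-extension $e_{(0)}\in\{0,1\}^{k-m}$ by $(e_{(0)})_\ell=(w_\epsilon)_{m+\ell}$ for $\ell=1,\ldots,k-m$ and set $r_{(0)}=w\,e_{(0)}$. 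A direct application of Eq.~\eqref{alternating} together with the block-by-block definition of $\sigma$ on $\mathcal{W}_k$ yields
\[
\sigma(r_{(0)})=\sigma_e(w)\cdot\prod_{i=m+1}^{k}\sigma_i\bigl((w_\epsilon)_i\bigr).
\]
The key observation is that every factor of the trailing product is empty. Indeed, since $\sigma$ is alternating and $\sigma(w_\epsilon)=\epsilon$, expanding this identity via Eq.~\eqref{alternating} gives $\sigma_1((w_\epsilon)_1)\cdots\sigma_k((w_\epsilon)_k)=\epsilon$, forcing $\sigma_i((w_\epsilon)_i)=\epsilon$ for each $i\in\{1,\ldots,k\}$. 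Hence $\sigma(r_{(0)})=\sigma_e(w)$.

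Iterating the construction produces $k$-roundings $r_{(0)},r_{(1)},r_{(2)},\ldots$ with $\sigma(r_{(\ell)})=\sigma_e^{\ell+1}(w)$ for every $\ell\ge 0$. Because $\sigma$ is completely erasing, some $n\in\mathbb{N}$ satisfies $\sigma_e^n(w)=\epsilon$, and consequently $\epsilon\in w^{[n]}$, as required. The main obstacle, essentially the only real content of the argument, is the residue bookkeeping: one has to track the position modulo $k$ of each extension digit so that the simple substitution $\sigma_i$ which ends up acting on it is precisely the one that kills $(w_\epsilon)_i$. Once this index matching is verified, the inductive step is formal and the full implication reduces to the finiteness of the vanishing order of $w$ under $\sigma_e$.
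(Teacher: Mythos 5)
Your proof is correct and follows essentially the same route as the paper's: the outer implications are dispatched as (near-)trivial, and the middle implication is proved by choosing each $k$-extension to consist of the appropriately indexed digits of $w_\epsilon$, using that $\sigma(w_\epsilon)=\epsilon$ forces $\sigma_i((w_\epsilon)_i)=\epsilon$ for every $i$, so that the $k$-roundings reproduce the iterates $\sigma_e^{\ell}(w)$ and the vanishing order $\epsilon(w)$ yields $\epsilon\in w^{[\epsilon(w)]}$. The residue bookkeeping you flag is exactly the content of the paper's inductive definition of the extensions $e^{(j)}$, so nothing is missing.
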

\begin{proof}
\noindent The first and last implications are trivial, so we just have to prove the middle one. Assume then that $\sigma$ is completely erasing. Since $\sigma$ is alternating, we can rewrite it by means of the simple substitutions $\sigma_1,\dots \sigma_k$ as in Eq.\eqref{alternating}. Applying $\sigma$ to $w_\epsilon$ we readily deduce that 
\begin{equation}
\sigma_i((w_\epsilon)_i)=\epsilon\ \text{ for every}\ i\in\{1,\dots,k\}.
\label{alterneras}
\end{equation}
\noindent Take then any $w\in \{0,1\}^*$. Set $w^{(0)}=w$  and write $|w|$ as $n_0k+m_0$, with $n_0\in\mathbb{N}_0$ and $m_0\in\{0,\dots, k-1\}$. Set $w^{(1)}=\sigma(w^{(0)}e^{(0)})$, where $e^{(0)}$ is the unique $k$-extension of $w^{(0)}$ such that $(e^{(0)})_i={(w_\epsilon)}_{m_0+i}$ for every $i	\in\{1,\dots,k-m_0\}$. Because of Eq.\eqref{alterneras}, we have that $\sigma(w^{(0)}e^{(0)})=\sigma(w^{(0)})$.
We can proceed in this way for further $\epsilon(w^{(0)})-1$ steps, setting, for every $j\in\{1,\dots,\epsilon(w)-1\}$: 
$$|w^{(j)}|=n_jk+m_j\ (n_j\in\mathbb{N}_0,\  m_j\in\{0,\dots, k-1\})\ , \  w^{(j+1)}=\sigma(w^{(j)}e^{(j)})$$
where  $|e^{(j)}|=k-m_j\ \text{and}\ \left(e^{(j)}\right)_i={(w_\epsilon)}_{m_j+i}\ \text{for every}\ i	\in\{1,\dots,k-m_j\}$. 

\noindent Recalling Eq.\eqref{alterneras}, at each step we have $\sigma(w^{(j)}e^{(j)})=\sigma(w^{(j)})$, so that
$$\sigma(w^{(\epsilon(w)-1)}e^{(\epsilon(w)-1)})=\epsilon,$$ which implies that $\epsilon\in w^{[\epsilon(w)]}$. 
\end{proof}

\noindent We now provide some examples of substitutions falling in each of the erasing classes introduced in Definition \ref{classes}. The substitution $\sigma_{1}$ defined below is erasing, but not strongly erasing.
Indeed, 110 produces a cycle of order 2 of words having length exactly $k$ (so that we do not have to choose any $k$-extension), that never reaches the empty word. Notice that $\sigma_1$ verifies the optimality condition \eqref{o.c}.

\noindent The substitution $\sigma_{2}$ defined below is strongly erasing, but not completely erasing.  Indeed, $\sigma_2$ is not alternating as, if so, odd-indexed 0s and even-indexed 1s should be mapped to $\epsilon$, which would imply, since $\sigma_2(00)=0$, that even-indexed 0s are mapped to 0, but this is incompatible with $\sigma_2(10)=1$. Notice then that $|\sigma_2(w)|\le (|w|)/2$ for $w$ having even length, while $|\sigma_2(w1)|\le (|w|+1)/2$ for $w$ having odd length, which implies that, $2$-extending when needed with 1, we arrive in a finite number of iterates at length 1. At that point, $2$-extending always with the digit 1 we arrive at $\epsilon$ in at most 2 steps. 
Notice that $\sigma_2$ verifies the optimality condition \eqref{o.c}.

\noindent The substitution $\sigma_{3}$ defined below is completely erasing, but not boundedly erasing.
Indeed $\sigma_3$ is alternating, with all 0s going to $\epsilon$, odd-indexed 1s going to $0$ and even-indexed 1s going to 1. Moreover $|(\sigma_3)^2(w)|$ is strictly less than $|w|$ for every finite word $w$, because all odd-indexed elements of $w$ go to $\epsilon$ in at most two iterations, which implies that $\sigma_3$ is completely erasing. On the other hand, $\sigma_3$ is not boundedly erasing as, taking for instance $w=1^{2m}$ ($m\in\mathbb{N}$), we have $|(\sigma_3)^2(w)|=1^m$, so that $\epsilon(w)$ will diverge with $|w|$. Notice that $\sigma_3$ verifies the optimality condition $\eqref{o.c}$.

\noindent Finally, the substitution $\sigma_{4}$ defined below is boundedly erasing. Indeed, it is alternating, as it can be rewritten as in Eq.\eqref{alternating} using the three simple substitutions $\sigma_4^{i}$ ($i=1,2,3$) defined by: $\sigma_4^{1,2,3}(0)=\epsilon$, $\sigma_4^1(1)=010$, $\sigma_4^2(1)=001$ and $\sigma_4^3(1)=000$. Observe that, for every word $w$ of length 3, $\sigma(w)$ is made of an integer number of words of length 3 which are all strictly smaller, in lexicographic order, than $w$. Since the smallest of these words, 000, is mapped to $\epsilon$, this easily implies that $\epsilon(w)\le 8$ for every $w\in\{0,1\}^*$.

\begin{tabular}{p{0.20 \textwidth} p{0.15\textwidth} p{0.17\textwidth}p{0.28\textwidth}}
\vspace{2mm}\\
$\sigma_1(000)=\epsilon$ &  $\sigma_2(00)=0$ & $\sigma_3(00)=\epsilon$ & $\sigma_4(000)=\epsilon$\\
$\sigma_1(001)=00$ &$\sigma_2(01)=\epsilon$ & $\sigma_3(01)=1$ & $\sigma_4(001)=000$ \\
$\sigma_1(010)=011$ & $\sigma_2(10)=1$ & $\sigma_3(10)=0$ & $\sigma_4(010)=001$\\
$\sigma_1(011)=010$ & $\sigma_2(11)=0$ & $\sigma_3(11)=01$ & $\sigma_4(011)=001000$\\
$\sigma_1(100)=10$ & & & $\sigma_4(100)=010$\\ 
$\sigma_1(101)=110$ & & & $\sigma_4(101)=010000$\\
$\sigma_1(110)=111$ & & & $\sigma_4(110)=010001$\\
$\sigma_1(111)=110$ & & & $\sigma_4(111)=010001000$\\
\\
\footnotesize{Erasing} & \footnotesize{Strongly} & \footnotesize{Completely} & \footnotesize{Boundedly}\\
& \footnotesize{$\, \,$erasing} & \footnotesize{$\ \ \,$erasing} & \footnotesize{$\ \ $erasing}
\vspace{2mm}
\end{tabular}
\vspace{0.4cm}
\begin{rem} The substitution $\sigma_3$ can be considered the simplest case of completely erasing substitution. The related map $f_{\sigma_3}$ is thoroughly studied in \cite{dellacorte2021simplest}.
\end{rem}

\noindent We will see that, moving towards completely erasing substitutions, we get increasingly complex dynamics for the corresponding interval map, while boundedly erasing substitutions have to be considered an extreme case, as they are so efficient in chopping off digits that the dynamical behavior becomes almost trivial.
\vspace{0.2cm}

\noindent In general an erasing block substitution $\sigma$ is a non-morphic map, in the following sense: if $w\in\{0,1\}^\omega$ and $u$ is a finite binary word, in general $\sigma(uw)\ne\sigma(u)\sigma(w)$. To address this point, let us introduce a countable family of maps induced by an erasing $k$-block substitution $\sigma$ and by the choice of a finite word $u$.
 
\noindent Take $u\in\{0,1\}^*$, $v\in\{0,1\}^\infty$.  Let us assume that $\sigma^0_u(v)=\sigma^0(v)=v$. Let us define, then,  the map $\sigma_u:\{0,1\}^\infty\to \{0,1\}^\infty$ by means of the equality 
\begin{equation}
\sigma(uv)=\sigma(uu^{(1)})\sigma_u(v),
\label{sigmau}
\end{equation}
where $u^{(1)}$ is the unique prefix of the word $v$ which is a $k$-extension of $u$. Set $\sigma_u^1(w)=\sigma_u(w)$. Notice that here, at the left hand side of Eq.\eqref{sigmau}, we apply the convention described in Remark \ref{convention}, i.e. we ``drop'' the digits of $v$ that exceed the longest of the prefixes of $uv$ having length multimple of $k$. 

\noindent Iterating the procedure, for every positive integer $n$, we wet $p^{(0)}=u$ and define the words $p^{(n)}$ and the maps $\sigma_u^n:\{0,1\}^\infty\to\{0,1\}^\infty$ inductively by means of the following equalities:
\begin{equation}
\begin{cases}
p^{(1)}=\sigma \left( uu^{(1)}\right)\\
\sigma^2(uv)=\sigma \left( p^{(1)}u^{(2)}\right)\sigma_u^2(v)\\
\dots\\ 
\dots\\
p^{(n-1)}=\sigma \left( p^{(n-2)}u^{(n-1)}\right)\\
\sigma^n(uv)=\sigma \left( p^{(n-1)}u^{(n)}\right)\sigma_u^n(v)
\end{cases}
\label{sigma_u^n}
\end{equation}
where, at each stage, $u^{(i)}$ is the unique prefix of $\sigma_u^{i-1}(v)$ which is a $k$-extension of $p^{(i-1)}$. Notice that $\sigma_{\epsilon}$ coincides with $\sigma$.
\begin{rem}
Here we apply the convention described in Remark \ref{convention}, i.e. we ``drop'' the digits of $uv$ whenever this is necessary to get a word having length multiple of $k$ ensuring that $\sigma^i(uv)$ makes sense for $1\le i\le n$. As observed before, this convention is used simply to lighten the notation in cases in which we are not interested in what happens after $\sigma_u^i(v')$. 
\label{lighten}
\end{rem}

\noindent In case $\sigma$ is alternating there is no need of a convention of the type described in Remarks \ref{convention} and \ref{lighten}, because in that case, thanks to Eq.\eqref{alternating}, $\sigma(w)$ already makes sense for words of any length, and therefore a simpler definition can be given for $\sigma_u$. Indeed, for every positive integer $n$ and every $v\in\{0,1\}^\infty$, we define $\sigma^n_u(v)$ as the word verifying
\begin{equation}
\sigma^n(uv)=\sigma^n(u)\sigma^n_u(v)
\label{sigma_u^n_compl}
\end{equation}
\noindent Notice finally that, both in Eqs.\eqref{sigma_u^n} and in Eq.\eqref{sigma_u^n_compl}, the map $\sigma^n_u$ does not coincide with $(\sigma_u)^n$, that is the $n$-th iterate of the map $\sigma_u$. Instead, $\{\sigma^n_u\}_{n\in\mathbb{N}}$ is a countable set of generally distinct maps indexed by the positive integers. 

\section{General properties of the map $f_\sigma$}
\label{generall}
\noindent In this Section we want to address some analytical properties verified by $f_\sigma$ when $\sigma$ is an erasing $k$-block substitution. \textbf{Throughout the Section, $\sigma$ is a $k$-block erasing substitution defined by Eq.\eqref{sigma}.}
\begin{lem}
If $\sigma$ verifies the optimality condition \eqref{o.c}, then $f_\sigma:\mathbb{I}\to\mathbb{I}$ is onto.
\label{surj}
\end{lem}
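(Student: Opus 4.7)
My plan is to read the optimality condition as a direct recipe for producing preimages. Given $y\in\mathbb{I}$, I want to exhibit an $x\in\mathbb{I}$ with $f_\sigma(x)=y$. The case $y=0$ is immediate: $x=0$ gives $f_\sigma(0)=0$ by the very definition of $f_\sigma$.

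For $y\in(0,1]$, I apply OC to $\widetilde{y}\in\{0,1\}^\omega$, obtaining a sequence $(h_i)_{i\in\mathbb{N}}$ with $\widetilde{y}=\prod_{i=1}^\infty v_{(h_i)}$. The natural candidate preimage is $u=\prod_{i=1}^\infty w_{(h_i)}\in\{0,1\}^\omega$, with $x=0.u$. Because the $k$-block substitution acts block-by-block on $u$, we have the identity $\sigma(u)=\prod_{i=1}^\infty\sigma(w_{(h_i)})=\prod v_{(h_i)}=\widetilde{y}$; hence, provided that $\widetilde{x}=u$ (equivalently, $u$ does not end in $0^\infty$) and $u\ne w_\epsilon^\infty$, the definition of $f_\sigma$ yields $f_\sigma(x)=0.\sigma(\widetilde{x})=y$. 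The second condition is automatic, since $u=w_\epsilon^\infty$ would force $\widetilde{y}=\epsilon$, contradicting $y>0$.

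The delicate point is that the raw decomposition produced by OC may well yield a $u$ ending in $0^\infty$; in that case $x\in\mathcal{Q}_2$, its canonical expansion $\widetilde{x}$ ends in $1^\infty$ rather than $0^\infty$, and the identity $\sigma(\widetilde{x})=\widetilde{y}$ breaks down. To repair this, I note that $u$ ending in $0^\infty$ means $w_{(h_i)}=0^k$ for all large $i$; if in addition $w_\epsilon=0^k$, then $v_{(h_i)}=\sigma(0^k)=\epsilon$ for such $i$, contradicting $|\widetilde{y}|=\infty$. Thus I may assume $w_\epsilon\ne 0^k$, so that $v_\ast:=\sigma(0^k)$ is a non-empty word. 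I then replace the $0^\infty$-tail of $u$ by the alternation $(0^k w_\epsilon)^\infty$, setting $u'=w_{(h_1)}\cdots w_{(h_N)}(0^k w_\epsilon)^\infty$; a block-by-block check gives $\sigma(u')=v_{(h_1)}\cdots v_{(h_N)}(v_\ast\epsilon)^\infty=\widetilde{y}$, while $u'$ does not end in $0^\infty$ (since $w_\epsilon\ne 0^k$) and is manifestly not $w_\epsilon^\infty$. Setting $x=0.u'$ then yields $\widetilde{x}=u'$ and $f_\sigma(x)=y$.

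The main obstacle I anticipate is exactly this interplay between the ambiguity of the binary representation of dyadic rationals and the erasing character of $\sigma$: one must detect when the preimage built from OC fails to coincide with the canonical expansion of the corresponding real number, and invoke the $w_\epsilon$-interleaving trick. That trick is available precisely when it is needed, because the only obstruction to it, $w_\epsilon=0^k$, is incompatible with the tail $0^\infty$ arising in the first place. Once the case analysis is in place, the remaining verifications are routine concatenation bookkeeping.
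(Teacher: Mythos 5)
Your proof is correct and follows essentially the same route as the paper's: take the OC decomposition $\widetilde{y}=\prod v_{(h_i)}$, use $\prod w_{(h_i)}$ as the candidate preimage when it does not end in $0^\infty$, and otherwise observe that $w_\epsilon\ne 0^k$ and repair the $0^\infty$-tail by interleaving copies of $w_\epsilon$ as aligned $k$-factors (the paper's set $S_y$ is just the general form of your $(0^k w_\epsilon)^\infty$ trick). No gaps.
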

\begin{proof}
Take $y\in \mathbb{I}$. By the optimality condition, there exists a sequence $h=h_1,h_2,\ldots \in \lbrace1,2,\ldots ,2^k \rbrace^\mathbb{N}$ such that
$u=\prod_{i=1}^\infty v_{(h_i)}$, $y=0.u$ and $v_{(h_i)}\ne\epsilon$ for every $i\in\mathbb{N}$.

\noindent Suppose that $w=\prod_{i=1}^\infty w_{(h_i)}$ does not end in $0^\infty$.
Then we can take $x\in \mathbb{I}$ such that
\begin{equation}\widetilde{x}=\prod_{i=1}^\infty w_{(h_i)}
\label{xtilda}
\end{equation}
so that
$\sigma(\widetilde{x})=\prod_{i=1}^\infty \sigma \left(w_{(h_i)}\right)=\prod_{i=1}^\infty v_{(h_i)}=u$.
Since $\sigma(\widetilde{x})=u\in \lbrace0,1 \rbrace^\omega$, we have $x\notin \mathcal{E}^\sigma$. It follows that
$f_\sigma(x)=0.\sigma(\widetilde{x})=0.u=y.$

\noindent Suppose instead that $w$ ends in $0^\infty$ and let $a\in\{0,1\}^*$ be the shortest word such that $w=a0^\infty$ (notice that this implies that $w_{\epsilon}\ne 0^k$, as $w_{\epsilon}$ does not appear in the right hand side of \eqref{xtilda}). Then we cannot find any $x\in\mathbb{I}$ such that $\widetilde{x}=w$. However, we can insert infinitely many times $w_\epsilon$ in $w$ as a $k$-factor, obtaining a point whose image is $y$. More precisely, set: $S_y=\{b\in\{0,1\}^\omega: b=a\prod_{i=1}^\infty 0^{n_i}w_\epsilon^{m_i}\}$, 
where $(m_i)_{i\in\mathbb{N}}$ and $(n_i)_{i\in\mathbb{N}}$  are sequences of non-negative integers both not ultimately 0, and $|a|+\sum_{j=1}^{i}n_j$ is a multiple of $k$ for every $i\in\mathbb{N}$. Recalling that $\sigma(w_\epsilon)=\epsilon$, it follows that $\emptyset\ne\{x\in\mathbb{I}:\widetilde{x}\in S_y\}\subseteq f_\sigma^{-1}(y)$.
\end{proof}
\noindent In the following it will be useful to have a symbol for the points considered in the second part of the previous proof, so we set 
$$\mathcal{F}^\sigma=\{y\in\mathbb{Q}\cap\mathbb{I} : y=0.wv_{(0)}^\infty,\  w=\prod_{j=1}^n v_{(i_j)}\ \text{for some $n\in\mathbb{N}_0$, and}\ v_{(0)}=\sigma(0^k)\}.$$ 

\begin{lem}
The map $f_\sigma$ is continuous on the set $\mathcal{C}^\sigma=\mathbb{I}\setminus (\mathcal{Q}_2^0\cup \mathcal{E}^\sigma)$,  and therefore Baire-1. Moreover,  $f_\sigma$ is left-continuous on $\mathbb{Q}_2\setminus\{0\}$.
\label{conti}
\end{lem}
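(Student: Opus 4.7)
\noindent The overall strategy is a prefix-matching argument exploiting the block-wise action of $\sigma$: if $\widetilde{x}$ and $\widetilde{y}$ agree on their first $mk$ digits, then $\sigma(\widetilde{x})$ and $\sigma(\widetilde{y})$ share the prefix $\sigma(\widetilde{x}_1\cdots\widetilde{x}_{mk})$, whose length can be made arbitrarily large whenever $\sigma(\widetilde{x})$ is infinite, and this forces $|f_\sigma(x)-f_\sigma(y)|$ to be small.

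\noindent For continuity at $x\in\mathcal{C}^\sigma$, the hypothesis $x\notin\mathcal{E}^\sigma$ gives $|\sigma(\widetilde{x})|=\infty$ via the implication stated just after the definition of $\mathcal{E}^\sigma$. Fix $\varepsilon>0$, pick $N$ with $2^{-N}<\varepsilon$, and choose $m$ such that $|\sigma(\widetilde{x}_1\cdots\widetilde{x}_{mk})|\ge N$. Since $x\notin\mathcal{Q}_2^0$, $x$ lies strictly inside the open interval $(0.\widetilde{x}_1\cdots\widetilde{x}_{mk},\, 0.\widetilde{x}_1\cdots\widetilde{x}_{mk}+2^{-mk})$, and a short case analysis (splitting on whether a nearby $y$ is dyadic or not) shows that any $y$ strictly inside this interval has both of its binary representations starting with $\widetilde{x}_1\cdots\widetilde{x}_{mk}$, so in particular $\widetilde{y}_1\cdots\widetilde{y}_{mk}=\widetilde{x}_1\cdots\widetilde{x}_{mk}$. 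Choosing $\delta>0$ so that $(x-\delta,x+\delta)$ is contained in this interval and misses the (at most one) isolated point $0.w_\epsilon^\infty$ guarantees $f_\sigma(y)=0.\sigma(\widetilde{y})$, and block-wise evaluation then delivers $|f_\sigma(y)-f_\sigma(x)|\le 2^{-N}<\varepsilon$.

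\noindent The Baire-1 property follows from the classical result that a real function continuous outside a countable set is of the first Baire class: the complement $\mathbb{I}\setminus\mathcal{C}^\sigma=\mathcal{Q}_2^0\cup\mathcal{E}^\sigma$ is indeed countable, both components being parameterized by finite binary words.

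\noindent For left-continuity at $y\in\mathbb{Q}_2\setminus\{0\}$, observe that $\widetilde{y}$ ends in $1^\infty$ and, since $w_\epsilon\ne 1^k$ forces $\sigma(1^k)\ne\epsilon$, the word $\sigma(\widetilde{y})$ is infinite. Given $\varepsilon>0$, pick $N$ with $2^{-N}<\varepsilon$ and $m$ so large that $|\sigma(\widetilde{y}_1\cdots\widetilde{y}_{mk})|\ge N$ and the tail $\widetilde{y}_{mk+1}\widetilde{y}_{mk+2}\cdots$ is already $1^\infty$; then $0.\widetilde{y}_1\cdots\widetilde{y}_{mk}=y-2^{-mk}$, so every $x\in(y-2^{-mk},y)$ satisfies $\widetilde{x}_1\cdots\widetilde{x}_{mk}=\widetilde{y}_1\cdots\widetilde{y}_{mk}$ and the same block-wise matching yields $|f_\sigma(x)-f_\sigma(y)|\le 2^{-N}<\varepsilon$. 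The main delicate step throughout is carefully tracking the two binary representations at dyadic rationals and the exceptional role of $0.w_\epsilon^\infty$, but these produce only isolated, localized perturbations that the neighborhood choice absorbs.
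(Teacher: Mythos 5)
Your proof is correct and follows essentially the same route as the paper's: a prefix-matching argument showing that agreement of $\widetilde{x}$ and $\widetilde{y}$ on the first $mk$ digits forces $\sigma(\widetilde{x})$ and $\sigma(\widetilde{y})$ to share the prefix $\sigma(\widetilde{x}_1\cdots\widetilde{x}_{mk})$, whose length is unbounded because $x\notin\mathcal{E}^\sigma$ (resp.\ because $\sigma(1^k)\ne\epsilon$ for the left-continuity claim), together with the standard fact that a function with countably many discontinuities is Baire-1. Your treatment is slightly more explicit than the paper's about the choice of neighborhood and the exceptional point $0.w_\epsilon^\infty$, but the underlying argument is the same.
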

\begin{proof}
Take $x\in \mathcal{C}^\sigma$ and fix $n\in \mathbb{N}$ . Then $\widetilde{x}=\prod_{h=1}^\infty w_{(i_h)}$ ($i_h\in \lbrace1,\ldots2^k\rbrace$) and $\widetilde{x}$ does not have $w_\epsilon^\infty$ as a $k$-factor.  Since $|v_{(j)}|\ge 1$ if $w_{(j)}\ne w_\epsilon$, there is a prefix $p$ of $\widetilde{x}$ such that $\left| p \right|=m k$ ($m\in \mathbb{N}$) and $\left| \sigma(p)\right| \ge n$. Since $x$ has a unique binary expansion, for every $y\in\mathbb{I}$ such that $\left|x-y \right|<2^{-mk}$, we have that $\widetilde{x},\widetilde{y}$ coincide at least up to the first $mk$ digits. Then
$\left| f_\sigma(x) - f_\sigma(y)\right|\le 2^{-(\left|\sigma(p)\right|)}\le 2^{-n}$. Since we assume $w_{\epsilon} \neq 1^k$,  recalling that $\sigma$ acts on the 1-periodic expansion of dyadic rationals, the argument above shows that $f_\sigma$ is left continuous on $\mathbb{Q}_2\setminus\{0\}$ (notice that, assuming instead $w_{\epsilon} \ne 0^k$, we would get right continuity on $\mathbb{Q}_2\setminus \{1\}$). 

\noindent Finally, interval maps with countably many discontinuities are pointwise limit of continuous functions, that is Baire-1 (see for instance \cite{van1982second}).
\end{proof}
\noindent Notice that, in the particular case $w_\epsilon=0^k$, the sets $\mathcal{E}^\sigma$, $\mathcal{F}^\sigma$, $\mathcal{Q}_2^0$ and $\mathbb{I}\setminus \mathcal{C}^\sigma$ coincide.
\begin{rem}
For every pair of integers $m,n>0$ there are irrational points $x$ such that $\widetilde{x}$ has a prefix of type $pw_\epsilon^m$ \emph{(}$p\in\{0,1\}^{nk}$\emph{)}. Two points sharing such a prefix are arbitrarily close if $m\to\infty$, but their $f_\sigma$-images are not necessarily arbitrarily close, because $|\sigma(pw_\epsilon^m) |$ is independent of $m$. Therefore $f_\sigma\vert_\mathcal{C^\sigma}$ is in general not uniformly continuous, so that it is impossible to extend it to a continuous function on $\mathbb{I}$.
\end{rem}

\noindent We recall that an interval map $g:\mathbb{I}\to \mathbb{I}$ is called \textit{Darboux} if the image of every interval is an interval. From the point of view of combinatorial dynamics, Darboux functions have some of the properties of continuous functions, in particular for what concerns the existence of periodic points (see e.g. \cite{korczak2015topological}, where Darboux plus Baire-1 is shown to imply the Sharkowsky ordering of periodic points).

\noindent However, the general interval map $f_\sigma$ is not Darboux unless $\sigma$ is constructed quite \textit{ad hoc}. Take indeed $y\in\mathcal{Q}_2$ such that  $\widetilde{y}=\left(\prod_{n=1}^N w_{(i_n)}\right) 1^\infty$ for some  $N\in\mathbb{N}$ and $w_{i_N}$ ends with 0. 
Let $p\in\{1,\dots,2^k\}$ be such that $w_{(p)}\in\{0,1\}^k$ is the unique word of length $k$ coinciding up to the digit $k-1$ with $w_{(i_N)}$, but ending with 1. Supposing that $w_\epsilon\ne 0^k$, the right and left limit of $f_\sigma$  at $y$ are written as: 
\begin{equation}
\lim_{x\to y^-} f_\sigma(x)=f_\sigma(y)=0.\left(\prod_{n=1}^N v_{(i_n)}\right)(\sigma(1^k))^\infty
\label{lim1}
\end{equation}
 \begin{equation}
 \lim_{x\to y^+} f_\sigma(x)=0.\left(\prod_{i=1}^{N-1} v_{(i_n)}\right)v_{(p)} (\sigma(0^k))^\infty
\label{lim2}
\end{equation} 
Clearly $f_\sigma$ can be Darboux only if these two real numbers coincide for every $y\in\mathcal{Q}_2$, which will be not true in general. The map $f_\sigma$ is in general not Darboux also when $w_\epsilon=0^k$. The simplest example to see that is given by the substitution $\sigma_3$ defined in Section \ref{prelim}, for which, when $y\in\mathcal{Q}_2$, the left limit $\lim_{x\to y^-}f_{\sigma_3}(x)=f_{\sigma_3}(y)$ does not belong to $$[\liminf_{x\to y^+}f_{\sigma_3}(x), \limsup_{x\to y^+}f_{\sigma_3}(x)]$$ (see \cite{dellacorte2021simplest} for further details).
Therefore, for our general object, we cannot expect a priori any relation between periodic points of different order, so that we will have to provide an explicit proofs of basic combinatorial dynamical properties. To proceed further, let us introduce some technical tools. 

\begin{defn} 
Let $\Xi\subset \mathbb{N}_0^{\mathbb{N}}$ be an uncountable set of sequences such that
\begin{equation}
a,b\in\Xi, a\ne b\implies \forall M>0,\ \exists n\in\mathbb{N}_0: n>M\ \text{and}\ a_n\ne b_n
\label{defiXi}
\end{equation}
Set $\Omega=\Xi\cap \{0,1\}^{\mathbb{N}}$ and call  $\Xi^+$ $(\Omega^+)$ the uncountable subset of $\Xi$ ($\Omega$) such that no one of its elements is ultimately 0. 
\label{defiomegaxi}
\end{defn}

\begin{defn}
Take $y\in\mathbb{I}\setminus \{0\}$ and write $\widetilde{y}=\prod_{i=1}^\infty w_{(h_i)}$. Let $(h_{i_j})_{j\in \mathbb{N}}$ be the subsequence of all $h_i$ such that $w_{(h_{i_j})}\ne w_\epsilon$. Then we define:
\begin{equation}
y^\circ= \xi\in\mathbb{I}\ \text{such that}\  \widetilde{\xi}=\prod_{i=1}^\infty u_{(j)},  \text{where } u_{(j)}=w_{(h_{i_j})}
\end{equation}
\label{circ}
\end{defn}
\noindent In other words, $y^\circ$ is the point whose binary expansion (ending in $1^\infty$, if there is a choice) coincides with that of $y$ after having erased all the (irrelevant, from the point of view of $\sigma$) occurrences of $w_{\epsilon}$ which are $k$-factors, so that $\widetilde{y^\circ}_{nk+1}\ldots\widetilde{y^\circ}_{(n+1)k} \neq w_{\epsilon}$ for every non-negative integer $n$. 

\begin{defn}
\noindent For $u\in\{0,1\}^\omega$ such that $u=\prod_{i=1}^\infty u_{(i)}$ (with $u_{(i)}\in\{0,1\}^k$), and for every $a\in\Xi$, we set:
\begin{equation}
\xi_{u}(a)=0.\prod_{i=1}^{\infty}w_\epsilon^{a_i}u_{(i)}
\label{mapxi}
\end{equation}
\label{xxi}
\end{defn}

\begin{lem}
For every $x\in \mathcal{S}^\sigma=f_\sigma(\mathbb{I})\setminus\mathcal{Q}_2^0$, the set $f_\sigma^{-1}(x)$ is uncountable.
\label{uncount} 
\end{lem}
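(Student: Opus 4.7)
The plan is to produce, for any $x \in \mathcal{S}^\sigma$, an uncountable family of preimages by inserting copies of the erasing block $w_\epsilon$ into a fixed preimage, using the insertion patterns furnished by the uncountable set $\Xi^+$ of Definition \ref{defiomegaxi}. First I would fix any $y_0 \in f_\sigma^{-1}(x)$, which exists because $x \in f_\sigma(\mathbb{I})$. Since $x \notin \mathcal{Q}_2^0$, its binary expansion $\widetilde{x}$ is infinite and unique, and therefore $\sigma(\widetilde{y_0}) = \widetilde{x}$ (as strings, since an infinite $\sigma(\widetilde{y_0})$ ending in $0^\infty$ would force $x \in \mathcal{Q}_2^0$); in particular $\widetilde{y_0}$ cannot have $w_\epsilon^\infty$ as a suffix, otherwise $\sigma(\widetilde{y_0})$ would be finite and $x$ dyadic. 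This guarantees that the compressed preimage $y_0^\circ$ from Definition \ref{circ} is well defined and satisfies $f_\sigma(y_0^\circ) = x$, because dropping $w_\epsilon$ $k$-blocks from $\widetilde{y_0}$ does not change $\sigma$'s image. Writing $\widetilde{y_0^\circ} = \prod_{j=1}^\infty u_{(j)}$ as its $k$-block decomposition, we have $u_{(j)} \in \{0,1\}^k \setminus \{w_\epsilon\}$ for every $j$.

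Next, for each $a \in \Xi^+$ I would set $\eta_a = \xi_{\widetilde{y_0^\circ}}(a)$ as in Definition \ref{xxi}, so that $\widetilde{\eta_a} = \prod_{j=1}^\infty w_\epsilon^{a_j} u_{(j)}$. Because $w_\epsilon \ne 1^k$ and infinitely many $u_{(j)}$'s appear, this infinite word does not end in $0^\infty$, so it is indeed the canonical expansion of a point $\eta_a \in \mathbb{I}$; its $k$-block decomposition from position $1$ is precisely $w_\epsilon,\dots,w_\epsilon,u_{(1)},w_\epsilon,\dots,w_\epsilon,u_{(2)},\dots$, and therefore
\[
\sigma(\widetilde{\eta_a}) \;=\; \prod_{j=1}^\infty \sigma(w_\epsilon)^{a_j}\,\sigma(u_{(j)}) \;=\; \prod_{j=1}^\infty \sigma(u_{(j)}) \;=\; \sigma(\widetilde{y_0^\circ}) \;=\; \widetilde{x},
\]
so $f_\sigma(\eta_a) = x$. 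This exhibits $\{\eta_a : a \in \Xi^+\} \subseteq f_\sigma^{-1}(x)$.

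It then suffices to show that the map $a \mapsto \eta_a$ is injective, since $\Xi^+$ is uncountable. Given $a \ne b$ in $\Xi^+$, condition \eqref{defiXi} yields infinitely many indices of disagreement; letting $j^\star$ be the first such index and assuming without loss of generality $a_{j^\star} < b_{j^\star}$, the two expansions $\widetilde{\eta_a}$ and $\widetilde{\eta_b}$ agree up to (and including) the $k$-block at position $\sum_{j<j^\star}(a_j+1) + a_{j^\star}$, after which the next $k$-block of $\widetilde{\eta_a}$ is $u_{(j^\star)}$ while that of $\widetilde{\eta_b}$ is still $w_\epsilon$; since $u_{(j^\star)} \ne w_\epsilon$, the two expansions differ and so do the corresponding points. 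I expect this injectivity step to be the main obstacle: one must rule out ``sliding'' coincidences produced by $w_\epsilon$-insertions, which is precisely why the compression to $y_0^\circ$ (forcing $u_{(j)} \ne w_\epsilon$) together with the unambiguous parsing into $k$-blocks from position $1$ is essential.
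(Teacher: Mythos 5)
Your construction is essentially the paper's own: fix a preimage, strip the $w_\epsilon$ blocks, and re-insert powers $w_\epsilon^{a_j}$ indexed by an uncountable family of sequences, with injectivity coming from the fact that the surviving blocks $u_{(j)}$ differ from $w_\epsilon$. The only real difference is organizational: the paper splits into the cases $x\notin\mathcal{F}^\sigma$ and $x\in\mathcal{F}^\sigma$ (using $\Xi$ in the first and $\Xi^+$ in the second), while you start from an arbitrary $y_0\in f_\sigma^{-1}(x)$ and use $\Xi^+$ uniformly, which is cleaner. One intermediate claim of yours is, however, false in exactly the situation the paper's second case is designed for: if $w_\epsilon\ne 0^k$ and the non-$w_\epsilon$ blocks of $\widetilde{y_0}$ are ultimately $0^k$ (possible when $x\in\mathcal{F}^\sigma$, e.g.\ $x=0.(\sigma(0^k))^\infty$ with $\sigma(0^k)$ neither a power of $0$ nor of $1$), then $\prod_j u_{(j)}$ ends in $0^\infty$ and $y_0^\circ$ in the sense of Definition \ref{circ} does not exist, so ``$y_0^\circ$ is well defined and $f_\sigma(y_0^\circ)=x$'' fails. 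This is harmless for your argument — you never need $y_0^\circ$ to be a point, only the word $u=\prod_j u_{(j)}$ fed into $\xi_u$ — but you should phrase it that way. Relatedly, your stated reason that $\widetilde{\eta_a}$ does not end in $0^\infty$ (``$w_\epsilon\ne 1^k$ and infinitely many $u_{(j)}$ appear'') is not the right one: the correct dichotomy is that either $w_\epsilon=0^k$, so every $u_{(j)}$ contains a $1$, or $w_\epsilon\ne 0^k$, in which case the infinitely many inserted $w_\epsilon$ blocks (here is where $a\in\Xi^+$ is genuinely needed) each contain a $1$.
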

\begin{proof}
\noindent Suppose first that $x\in \mathcal{S}^\sigma\setminus \mathcal{F}^\sigma$. Then there is $w\in\{0,1\}^\omega$ such that $w$ does not end in $0^\infty$ and $x=0.\sigma(w)$, so that there exists $y\in\mathbb{I}$ such that $w=\widetilde{y}$, and thus $y\in f_\sigma^{-1}(x)$. Moreover $\widetilde{y}$ does not have $w_\epsilon^\infty$ as a $k$-factor, because if so then $|\sigma(w)|$ would be finite so that $x\in\mathcal{Q}_2^0$ against the hypothesis.
Since every occurrence of $w_\epsilon$ as a $k$-factor is mapped by $\sigma$ to the empty word, we have $f_\sigma(y^\circ)=f_\sigma(y)$, so that $y^\circ\in f_\sigma^{-1}(x)$.

\noindent Let us write $y^\circ$ as $\prod_{i=1}^\infty w_{(h_i)}$ ($w_{(h_i)}\ne w_\epsilon$ for every $i$). By definition of the map $\xi$, we have that, for every $a\in \Xi$, $y_{(a)}:=\xi_{\widetilde{y^\circ}}(a)=0.\prod_{i=1}^{\infty}w_\epsilon^{a_i}w_{(h_i)}$, which implies that every point $y_{(a)}$ ($a\in\Xi$) is in $f_\sigma^{-1}(x)$. Recalling the definition of $\Xi$, it follows that the $\xi_{\widetilde{y^\circ}}$-image of every sequence $a\in\Xi$ is obtained by $\widetilde{y^\circ}$ inserting, infinitely many times, finite and nonempty concatenations of $w_\epsilon$ with itself, so that we are ensured that, for $u=\widetilde{y^\circ}$, the product in the right hand side of Eq.\eqref{mapxi} will never end with $0^\infty$ (both if $w_\epsilon=0^k$ or not). Since in the binary expansion of $y^\circ$ there is never $w_\epsilon$ as a $k$-factor, it follows that $\xi_{\widetilde{y^\circ}}$ is injective, which implies that $f_\sigma^{-1}(x)$ is uncountable.

\noindent Suppose now that $x\in \mathcal{S}^\sigma\cap \mathcal{F}^\sigma$. Since $x\in\mathcal{F}^\sigma$, there exists a word $w\in\{0,1\}^\omega$ ending in $0^\infty$ such that $0.\sigma(w)=x$. Notice that $x\in \mathcal{S}^\sigma$ implies that $|\sigma(w)|=\infty$, so that we can exclude $w_\epsilon=0^k$. Consider now the restriction $\xi^+_w$ to $\Xi^+$ of the map $\xi_w$ defined in Definition \ref{xxi}. Since every $a\in \Xi^+$ is not ultimately 0, we have that $$w_{(a)}:=\prod_{i=1}^{\infty}w_\epsilon^{a_i}w_{(h_i)}$$ will never end in $0^\infty$, so that $w_{(a)}=\widetilde{\xi_w(a)}$. Moreover, $\sigma(w_{(a)})=\sigma(w)$, so that $\xi_w(a)$ belongs to $f_\sigma^{-1}(x)$ for every $a\in\Xi^+$. The same argument as above shows that $\xi^+_w$ is injective, which concludes the proof. 
\end{proof}

\begin{corol}
If $\sigma$ verifies the optimality condition \ref{o.c}, then for every $x\in\mathbb{I}$ the set $f_\sigma^{-1}(x)$ is uncountable.
\label{corol_uncount}
\end{corol}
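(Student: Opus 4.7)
The plan is to reduce the statement to the two lemmas already proved. By Lemma \ref{surj}, the optimality condition forces $f_\sigma(\mathbb{I})=\mathbb{I}$, so the set $\mathcal{S}^\sigma=f_\sigma(\mathbb{I})\setminus\mathcal{Q}_2^0$ coincides with $\mathbb{I}\setminus\mathcal{Q}_2^0$. For every $x$ in this set Lemma \ref{uncount} already gives that $f_\sigma^{-1}(x)$ is uncountable, so the only work left is to handle $x\in\mathcal{Q}_2^0$, i.e. the countable exceptional set $\{0\}\cup\mathcal{Q}_2$.

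For such an $x$, I would fix a binary expansion $u$ of $x$ (taking $u=0^\infty$ if $x=0$, and $u=\widetilde{x}$ otherwise) and apply OC to write $u=\prod_{i=1}^\infty v_{(h_i)}$ with $v_{(h_i)}\ne\epsilon$ for every $i$. Set $w=\prod_{i=1}^\infty w_{(h_i)}\in\{0,1\}^\omega$; by construction no $w_{(h_i)}$ equals $w_\epsilon$, and $\sigma(w)=u$.

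Now I would split into the same two sub-cases that appear in the proofs of Lemmas \ref{surj} and \ref{uncount}. If $w$ does not end in $0^\infty$, then $y=0.w$ satisfies $\widetilde{y}=w$ and therefore $f_\sigma(y)=0.\sigma(w)=0.u=x$; moreover $\widetilde{y}$ has no $w_\epsilon$ as a $k$-factor, so $y=y^\circ$, and the map $\xi_{\widetilde{y}}$ of Definition \ref{xxi} is injective on $\Xi$. Exactly as in the $x\in\mathcal{S}^\sigma\setminus\mathcal{F}^\sigma$ sub-case of Lemma \ref{uncount}, every $y_{(a)}=\xi_{\widetilde{y}}(a)$ with $a\in\Xi$ lies in $f_\sigma^{-1}(x)$ (inserting $w_\epsilon$ as a $k$-factor does not alter the $\sigma$-image) and the $y_{(a)}$ are distinct, producing uncountably many preimages. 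If instead $w$ ends in $0^\infty$, then by the observation made in the proof of Lemma \ref{surj} we must have $w_\epsilon\ne 0^k$; the set $S_y$ constructed there then consists of infinite words not ending in $0^\infty$ (because $w_\epsilon$ appears as a $k$-factor infinitely often) that all map under $\sigma$ to $u$. Each such word equals $\widetilde{x_b}$ for a unique $x_b\in\mathbb{I}$ with $f_\sigma(x_b)=x$, and $S_y$ is easily checked to be uncountable from the free choice of the infinite sequences $(m_i),(n_i)$.

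The only delicate point I foresee is verifying in the first sub-case that the words $\xi_{\widetilde{y}}(a)$ really are the canonical $\widetilde{\cdot}$-expansions of their values, i.e. that they never end in $0^\infty$; this is automatic whenever $\widetilde{y}$ has infinitely many $1$'s, which is precisely the hypothesis of the sub-case, and it remains true after inserting $w_\epsilon$-blocks regardless of whether $w_\epsilon=0^k$ or not. Everything else is bookkeeping, so the real content is the case distinction on the tail of $w$, parallel to the one already carried out in Lemmas \ref{surj} and \ref{uncount}.
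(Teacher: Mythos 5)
Your proposal is correct and takes essentially the same route as the paper: the paper's proof likewise invokes the optimality condition to produce, for every $x\in\mathbb{I}$, an infinite word $u$ with $0.\sigma(\widetilde{y})=0.u=x$, and then reruns the $w_\epsilon$-insertion argument of Lemma \ref{uncount}. Your version merely spells out the case split on whether the preimage word ends in $0^\infty$ (handled via $\xi_{\widetilde{y}}$ on $\Xi$ versus the set $S_y$ from Lemma \ref{surj}), which the paper leaves implicit.
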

\begin{proof}
Under the assumption, for every $x\in\mathbb{I}$ there are an infinite binary word $u$ and $y\in\mathbb{I}$ such that $0.\sigma(\widetilde{y})=0.u=x$. Then the thesis easily follows from the argument used in the proof of the previous Lemma.
\end{proof}

\noindent A relatively simple characterization of the topological structure of the $f_\sigma$-preimages of singletons can be achieved strengthening the optimality condition. Even so, there are a few technical issues, the main being that the points belonging to $\mathcal{Q}_2$ can be ``reached'' in general in infinitely many ways. Indeed, each of them has two infinite binary expansions plus a countable family of finite ones (all ending in a finite string of 0s except at most one) which, from a symbolic point of view, constitute distinct representations. The different cases are covered in the following Lemma, which is the main technical tool needed to prove that, outside $\mathcal{Q}_2$, the closures of the fibers of points verifying the strengthened version of the optimality condition are Cantor sets. 

\begin{lem}
Suppose that $\sigma$ verifies the optimality condition \ref{o.c} and that, for the point $y\in\mathbb{I}$, the words $w\in\{0,1\}^\omega$ verifying $y=0.w$ are such that the sequence $h_1,h_2\dots$ satisfying Eq.\eqref{OC1}, and  such that $v_{(h_i)}\ne\epsilon$ for every $i\in\mathbb{N}$, is unique.
If $x$ is a limit point of $f_\sigma^{-1}(y)$, then one of the following cases occurs: 
\begin{enumerate}
\item $x\in f_\sigma^{-1}(y)$
\item $x=0.vw_{\epsilon}^\infty$, with $|v|=nk$ for some non-negative integer $n$.
\item $y\in\mathcal{F}^\sigma$ and $x=0.a\prod_{i=1}^\infty 0^{n_i}w_\epsilon^{m_i}$, 
where $a\in\{0,1\}^*$, $\{m_i\}$ is a sequence of non-negative integers which is ultimately zero and $n_i$ are positive integers such that $|a|+\sum_{j=1}^{i}n_j$ is a multiple of $k$ for every $i\in\mathbb{N}$.
\end{enumerate} 
\label{limit_}
\end{lem}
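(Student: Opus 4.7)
\noindent The plan is to take a sequence $(x^{(n)})_{n\in\mathbb{N}}$ in $f_\sigma^{-1}(y)$ with $x^{(n)}\to x$ and $x^{(n)}\ne x$, and use the uniqueness hypothesis on $(h_i)$ to extract the block structure of the binary expansions of $x$. For each $n$, split $\widetilde{x^{(n)}}$ into consecutive $k$-blocks $u_1^{(n)} u_2^{(n)}\cdots$. Because $y=0.\sigma(\widetilde{x^{(n)}})$, the non-$w_\epsilon$ blocks among the $u_j^{(n)}$, read in order, produce a decomposition of $y$ into nonempty $v_{(\cdot)}$'s; by uniqueness this sequence must agree with an initial segment of $(w_{(h_i)})_{i\in\mathbb{N}}$. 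Either this segment exhausts $(w_{(h_i)})$ (equivalently, $|\sigma(\widetilde{x^{(n)}})|=\infty$), or it is proper, in which case past some $k$-aligned position the blocks of $\widetilde{x^{(n)}}$ are all equal to $w_\epsilon$ (so $x^{(n)}\in\mathcal{E}^\sigma$) and $y=0.\prod_{i\le M}v_{(h_i)}$ for some finite $M$, which in turn forces $y\in\mathcal{F}^\sigma$.

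\noindent For $x\notin\mathcal{Q}_2^0$, the expansion $\widetilde{x}$ is uniquely determined and $\widetilde{x^{(n)}}\to\widetilde{x}$ digitwise, so every finite $k$-block prefix of $\widetilde{x}$ is eventually shared by some $\widetilde{x^{(n)}}$. The same uniqueness-driven constraint therefore applies to $\widetilde{x}$: its non-$w_\epsilon$ blocks form an initial segment of $(w_{(h_i)})$. If this segment is infinite then $f_\sigma(x)=y$ and we land in case~1; if it is finite then $\widetilde{x}$ terminates in $w_\epsilon^\infty$ from a $k$-aligned position, which is exactly case~2.

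\noindent For $x\in\mathcal{Q}_2^0$, pass to a subsequence in which all the $x^{(n)}$ approach $x$ from the same side. If they approach from below, $\widetilde{x^{(n)}}\to\widetilde{x}$ digitwise; since $\widetilde{x}$ ends in $1^\infty$ and $w_\epsilon\ne 1^k$, the terminal $1^k$-blocks of $\widetilde{x}$ are non-$w_\epsilon$ and the previous argument places $x$ in case~1. If they approach from above, $\widetilde{x^{(n)}}$ agrees on arbitrarily long prefixes with the second binary expansion $\widehat{x}=a0^\infty$ of $x$; when $w_\epsilon=0^k$ the terminal $0^k$-blocks of $\widehat{x}$ coincide with $w_\epsilon$ and (after $k$-padding $a$ if needed) we obtain $x=0.vw_\epsilon^\infty$, case~2, while when $w_\epsilon\ne 0^k$ the $0^k$-blocks in the tail are non-$w_\epsilon$, forcing by uniqueness the sequence $(h_i)$ to stabilise at the $0^k$-index, so that $y\in\mathcal{F}^\sigma$, and the limiting pattern of finite $w_\epsilon$-runs persisting between $0$-runs in the $\widetilde{x^{(n)}}$ produces the form of case~3. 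The main technical difficulty is verifying the alignment condition $|a|+\sum_{j\le i}n_j\equiv 0\pmod k$ of case~3; this follows because each $w_\epsilon$-block in $\widetilde{x^{(n)}}$ begins at a position that is a multiple of $k$, and preserving this property under the limit requires a careful bookkeeping of how the approximants split the transition region between the initial non-$w_\epsilon$ segment and the terminal $0^\infty$-tail.
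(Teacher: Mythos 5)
Your argument is organized quite differently from the paper's: you run the case analysis on the limit point $x$ (dyadic or not, side of approach) and work directly with digitwise convergence of $\widetilde{x^{(n)}}$, whereas the paper first fixes a case analysis on $y$ ($y\in\mathcal{Q}_2$ or not, $y\in\mathcal{F}^\sigma$ or not), writes the fiber explicitly as a union of sets parametrized by insertion sequences $a\in\mathbb{N}_0^{\mathbb{N}}$ via the maps $\xi_u$, and then proves that each coordinate $a_i^{(n)}$ converges in $\mathbb{N}_0\cup\{\infty\}$. Your central observation — that the uniqueness of the decomposition forces the non-$w_\epsilon$ $k$-blocks of every fiber element, and hence of every digitwise limit, to be an initial segment of the sequence $(w_{(h_i)})$ — is sound and replaces the paper's somewhat delicate two-sublimits argument by something cleaner. (One bookkeeping point you gloss over: when $y\in\mathcal{Q}_2$ there are \emph{two} infinite expansions and hence two "unique" sequences, plus the finite decompositions which, as you can check, are initial segments of the decomposition of the $0^\infty$-tailed expansion; a passage to subsequences fixes this.)

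There are, however, two concrete problems. First, the closing claim of your first paragraph — that $y=0.\prod_{i\le M}v_{(h_i)}$ for finite $M$ "forces $y\in\mathcal{F}^\sigma$" — is false. Membership in $\mathcal{F}^\sigma$ means the decomposition of an expansion of $y$ is ultimately the block $\sigma(0^k)$ repeated; a dyadic $y$ whose decomposition tail consists of all-zero words $v_{(h_i)}=\sigma(w_{(h_i)})$ with $w_{(h_i)}\ne 0^k$ has a finite representation but need not lie in $\mathcal{F}^\sigma$ (indeed the paper's Case 4 notes that a dyadic point of $\mathcal{F}^\sigma$ forces $\sigma(0^k)\in\{0^{N_1},1^{N_2}\}$). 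This error happens not to be load-bearing, because in your third paragraph you rederive $y\in\mathcal{F}^\sigma$ correctly from the stabilization of $(h_i)$ at the index of $0^k$, but as written the statement is wrong. Second, the verification that the from-above limits in the subcase $w_\epsilon\ne 0^k$ actually have the shape of case~3 is explicitly deferred ("requires a careful bookkeeping"); this is exactly the step where the trichotomy of the lemma is earned, and it should be carried out. The missing argument is short: since $w_\epsilon\ne 0^k$, the word $w_\epsilon^\infty$ contains infinitely many $1$s, so a digitwise limit ending in $0^\infty$ cannot contain an infinite terminal $w_\epsilon$-run; hence only finitely many $w_\epsilon$-runs survive in the limit, and the $k$-alignment of each surviving run is inherited from the approximants because it is a condition on a finite prefix. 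Adding these two repairs would make the proof complete.
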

\begin{proof}
\noindent We divide the proof in four cases.
\begin{enumerate}
\item $y\notin \mathcal{Q}_2\cup\mathcal{F}^\sigma$. 

\noindent Then $y$ has a unique binary expansion $u$ and, by hypothesis, there exists a unique sequence $h_1, h_2,\dots$ such that  $u=\prod_{i=1}^\infty v_{(h_i)}$. We can assume $v_{(h_i)} \ne\epsilon$ for every $i\in\mathbb{N}$. Moreover, since $y\notin \mathcal{F}^\sigma$, $w=\prod_{i=1}^\infty w_{(h_i)}$ does not end in $0^\infty$. Therefore, the point $x=0.w$ belongs to $f_\sigma^{-1}(y)$ and it coincides with $x^\circ$. By uniqueness of the sequence $h_1, h_2\dots$, it follows that $f_\sigma^{-1}(y)$ can be written as

\begin{equation}
\bigcup_{a\in {\mathbb{N}_0}^{\mathbb{N}}} \xi_{\widetilde{x^\circ}} (a)
\end{equation}
where $\xi_{\widetilde{x^\circ}}:{\mathbb{N}_0}^{\mathbb{N}}\to \mathbb{I}$ is the map defined in \ref{xxi}. 
If $z$ is a limit point of $f_\sigma^{-1}(y)$, there exists a sequence $\lbrace a^{(n)} \rbrace_{n\ge1}$  of elements of ${\mathbb{N}_0}^{\mathbb{N}}$ such that $\xi_{\widetilde{x^\circ}}(a^{(n)})\xrightarrow[n \to \infty]{} z$. Notice that, since $\widetilde{x^\circ}$ does not end with $0^\infty$, neither does $\widetilde{\xi_{\widetilde{x^\circ}}}(a)$ for every $a\in\mathbb{N}_0^\mathbb{N}$. 

\noindent We indicate by $ a_{i}^{(n)}$ the $i$-th element of the sequence $ a^{(n)}$.  Let us first prove that, for every $i\in\mathbb{N}$, there exists $a_i=\lim_{n\to\infty} a_i^{(n)}\le\infty$. Indeed, suppose that $m$ is the smallest positive integer such that $a_m^{(n)}$ admits two distinct sublimits, i.e. such that there exist two subsequences $n_h$ and $n_j$ such that $a_m^{(n_h)}\to\overline{a_m}$ and $a_m^{(n_j)}\to\overline{\overline{a_m}}$. Set
$q=w_\epsilon^{a_1}w_{(h_1)}\dots w_\epsilon^{a_{m-1}}w_{(h_{m-1})} w_\epsilon^{\min\{\overline{a_i},\overline{\overline{a_i}}\}}$ and $\delta=|\overline{a_i}-\overline{\overline{a_i}}|$. Then the points 
\begin{equation}
\{\xi_{\widetilde{x^\circ}}(a^{(n_h)})\}_{h\in\mathbb{N}},\ \{\xi_{\widetilde{x^\circ}}(a^{(n_j)})\}_{j\in\mathbb{N}}
\label{points_}
\end{equation}
ultimately belong respectively to the (closed) intervals $[qw_\epsilon^\delta w_{(h_m)}]$ and $[qw_{(h_m)}]$, which, recalling that $w_\epsilon\ne w_{h_m}$, have nonempty intersection only if either 1) $\delta=0$ or 2) $\delta\in\mathbb{N}$ and $w_{(h_m)}=p0$ and $w_\epsilon=p1$, or alternatively $w_{(h_m)}=p1$ and $w_\epsilon=p0$ for some $p\in\{0,1\}^{k-1}$. We can assume the former (in the other case the reasoning is completely analogous). A point $z$ in $[qw_\epsilon^\delta w_{(h_m)}]\cap [qw_{(h_m)}]$ must then admit the two binary forms $z=0.qp10^\infty$ and $z=0.qp01^\infty$. The first one can be only achieved if $\delta=1$, $w_{(h_m)}=0^k$ and $w_{(h_j)}=w_{(h_m)}$ for all $j>m$. This in turn implies that the second binary form, ending in $1^\infty$, can be only obtained if $w_\epsilon=1^k$.  But then $w_{(h_m)}$ and $w_\epsilon$ differ on every digit, which contradicts the existence of the word $p$ required before. It follows that it has to be $\delta=0$, which implies $\overline{a_i}=\overline{\overline{a_i}}$.

\noindent Assume now that it is nonempty the set $A$ of $i\in \mathbb{N}$ such that $a_i=\infty$, and set $z=0.u$. 
Then 
\begin{equation}
u=pw^\infty_{\epsilon},
\label{limitpoint}
\end{equation}
where $p\in \lbrace0,1\rbrace^*$ is the word $p=w_{\epsilon}^{a_1}w_{(h_1)}w_{\epsilon}^{a_2}w_{(h_2)}\ldots w_{\epsilon}^{a_{\bar{i}-1}}w_{(h_{\bar{i}-1})}$
with $\bar{i}=\min A$. Therefore, it falls under point 2. in the statement of the Lemma.

\noindent Suppose instead that $\lim_{n\to\infty} a^{(n)}_i\ne\infty$ for every integer positive $i$. Then $u=\prod_{i=1}^\infty w_{\epsilon}^{a_i}w_{(h_i)}$, which implies that $z\in f_\sigma^{-1}(y)$. Therefore, it falls under point 1. in the statement of the Lemma. Notice also that in this case, since $u$ is obtained from $\widetilde{x}$ by inserting (infinitely many times) finite concatenations of $w_\epsilon$ with itself in a word not having $w_\epsilon$ as a $k$-factor, $u$ cannot end in $0^\infty$ either if $w_\epsilon=0^k$ or if not, so that $u=\widetilde{z}$.

\item $y\in \mathcal{Q}_2\setminus\mathcal{F}^\sigma$.

\noindent By hypothesis $y$ has two binary expansions, $\widetilde{y}$ and $u\in\{0,1\}^\omega$ such that $u=v0^\infty$.
Then: i) by assumption there exist two words $w^{(1)},w^{(2)}\in\{0,1\}^\omega$ not having $w_\epsilon$ as a $k$-factor, such that $\sigma(w^{(1)})=\widetilde{y}$ and $\sigma(w^{(2)})=u$; ii) there is a countable family of finite integer sequences $$J=\cup_{j\in\mathbb{N}_0} \{j_i\}_{i=1\dots m_j}$$ such that there are words $w^{(3)}_{(j)}=\prod_{i=1}^{m_j} w_{(j_i)}$ ($m_j\in\mathbb{N}$) having the property that $\sigma\left(w^{(3)}_{(j)}\right)=v0^{g_j}$ for suitable non-negative integers $g_0,g_1,\dots\in \mathbb{N}_0$. Notice that the family $J$ is infinite, because the optimality condition ensures that there are arbitrarily large integers $h$ such that we can write $v0^h$ as a concatenation of the words $v_{(i)}=\sigma(w_{(i)})$. Recalling the definition of $f_\sigma$ (Eq. \eqref{fsigma__}), it follows that any point of type $0.w^{(3)}_{(j)}$ belongs to $f_\sigma^{-1}(y)$.

Then the set $f_\sigma^{-1}(y)$ decomposes as $S_1\cup S_2 \cup S_3$, where 
\begin{eqnarray*}\label{fibercase2}
&S_1=\bigcup_{a\in {\mathbb{N}_0}^{\mathbb{N}}} \xi_{w^{(1)}} (a)\ \ ,\ \ S_2=\bigcup_{a\in {\mathbb{N}_0}^{\mathbb{N}}} \xi_{w^{(2)}} (a)\\
&\\
&S_3= \bigcup_{a\in \mathbb{N}_0^{m}, j\in\mathbb{N}_0} \{x\in \mathbb{I} : x=0.\left(\prod_{i=1}^{m_j}w_\epsilon^{a_i}w_{(j_i)}\right)w_\epsilon^\infty\},
\end{eqnarray*}

where $\xi_{w^{(i)}}$ ($i=1,2$) is the map defined in \ref{xxi}. Since a limit point of a finite union of sets is limit point of (at least) one of the sets, we can repeat the argument used in Case 1. separately for $S_1$ and $S_2$. A similar argument shows that also the set $S_3$ has always limit points of the form \eqref{limitpoint}.

\item $y\in \mathcal{F}^\sigma\setminus \mathcal{Q}_2$.

\noindent Then there is no $x\in\mathbb{I}$ such that $f_\sigma(x)=y$ and $\widetilde{x}$ does not have $w_\epsilon$ as a $k$-factor. Indeed, by definition of $\mathcal{F}^\sigma$, there is $a\in\{0,1\}^*$ such that $0.\sigma(a0^\infty)=y$. Recalling that $f_\sigma$ is defined by the action of $\sigma$ on the 1-periodic expansion of dyadic rationals, it follows that 
\begin{equation}x\in f^{-1}_\sigma(y)\implies x=0.a\prod_{i=1}^\infty 0^{n_i}w_\epsilon^{m_i}
\label{0a}
\end{equation}
where $\{m_i\}$ is a sequence of non-negative integers which is not ultimately zero and $n_i$ are positive integers such that $|a|+\sum_{j=1}^{i}n_j$ is a multiple of $k$ for every $i\in\mathbb{N}$.
Let us define the set $\mathcal{B}$ as:
\begin{equation}
\mathcal{B}=\{b\in\{0,1\}^*: b=a\prod_{i=1}^N 0^{n_i}w_\epsilon^{m_i}\ \text{for some $N\in\mathbb{N}_0$}.\}
\label{b_}
\end{equation}  
Recalling Case 1., it follows that the set  
\begin{equation}
\mathcal{B}^\sigma_1=\{0.bw_\epsilon^{\infty} ,\  b\in\mathcal{B}\}
\end{equation}

belongs to $\overline{f_\sigma^{-1}(y)}$,  and its points fall under point 2. in the statement of the Lemma.  Moreover,  it follows that the set
\begin{equation}
\mathcal{B}^\sigma_2=\{0.b0^{\infty} ,\  b\in\mathcal{B}\}
\end{equation}
belongs to $\overline{f_\sigma^{-1}(y)}$,  and its points fall under point 3. in the statement of the Lemma. 

\item $y\in \mathcal{Q}_2\cap\mathcal{F}^\sigma$.

Suppose first that $w_\epsilon\ne 0^k$. By assumption $y$ has two infinite binary expansions, $\widetilde{y}$ and $u\in\{0,1\}^\omega$ such that $u=v0^\infty$ for some finite word $v$. 
Since $y\in \mathcal{F}^\sigma$ and is a dyadic rational, we have that $\sigma(0^k)=0^{N_1}$ or $\sigma(0^k)=1^{N_2}$ for some $N_1,N_2 \in \mathbb{N}$. In the following we consider the first case, a similar argument applies in the other one switching the roles of $\widetilde{y}$ and $u$.  

\noindent Then: i) there exists a word  $w^{(1)} \in \{0,1\}^\omega$ not having $w_\epsilon$ as a $k$-factor, such that $\sigma(w^{(1)}) = \widetilde{y}$; ii) moreover, recalling that $f_\sigma$ is defined by the action of $\sigma$ on the 1-periodic expansion of dyadic rationals and that $\sigma(0^k)$ is a string of 0s, there is no $x\in\mathbb{I}$ such that $\sigma(\widetilde{x})=u$ and $\widetilde{x}$ does not have $w_\epsilon$ as a $k$-factor, while, by definition of $\mathcal{F}^\sigma$, there is $c\in\{0,1\}^*$ such that $\sigma(c0^\infty)=u=v0^\infty$; iii) finally, there is a countable family of finite integer sequences $J=\cup_{j\in\mathbb{N}_0} \{j_i\}_{i=1\dots m_j}$ such that there are words $w^{(2)}_{(j)}=\prod_{i=1}^{m_j} w_{(j_i)}$ ($m_j\in\mathbb{N}$) having the property that $\sigma\left(w^{(2)}_{(j)}\right)=v0^{g_j}$ for suitable non-negative integers $g_j\in \mathbb{N}_0$.  It follows that $f_\sigma^{-1}(y)$ decomposes as $S_1 \cup S_2 \cup S_3$, where
\begin{eqnarray*}
&S_1=\bigcup_{a\in {\mathbb{N}_0}^{\mathbb{N}}} \xi_{w^{(1)}} (a)\ \ ,\ \ S_2=\{x\in \mathbb{I} : x=0.c\prod_{i=1}^\infty 0^{n_i}w_\epsilon^{m_i}\} \\
 &\\
  &S_3= \bigcup_{a\in \mathbb{N}_0^{m},j\in \mathbb{N}_0} \{x\in \mathbb{I} : x=0.\left(\prod_{i=1}^{m_j}w_\epsilon^{a_i}w_{(j_i)}\right)w_\epsilon^\infty\},
\label{fibercase4}
\end{eqnarray*}
where $\{m_i\}$ and $\{n_i\}$ have the same properties of Case 3. and $\xi_{w^{(1)}}$ is the map defined in \ref{xxi}.  Then we can repeat the argument of Case 2. for $S_1$ and $S_3$ and the argument of Case 3. for $S_2$. 

Finally, notice that, if $w_\epsilon=0^k$, the sets $S_2$ and $S_3$ do not belong to $f_\sigma^{-1}(y)$, since $\sigma$ acts on the 1-periodic expansion of dyadic rationals. 
\end{enumerate} \end{proof}
\begin{thm}
In the hypotheses of Lemma \ref{limit_}, the closure of $f_\sigma^{-1}(y)$ is a Cantor space, as a topological subspace of  $\mathbb{I}$, if $y\notin \mathcal{Q}_2$. 
\label{cantorr}
\end{thm}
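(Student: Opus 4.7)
The plan is to verify that $K:=\overline{f_\sigma^{-1}(y)}$ satisfies the four defining properties of a Cantor space: nonempty, compact, perfect, and totally disconnected. Nonemptiness follows from Lemma \ref{surj} (surjectivity under OC), and compactness is immediate since $K$ is closed and bounded in $\mathbb{I}$.

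For total disconnectedness, the strategy is to show that $K$ has Lebesgue measure zero: any measure-zero subset of $\mathbb{R}$ contains no nondegenerate interval and is therefore totally disconnected. To bound the measure, I would cover $K$ at each scale by binary cylinders of length $mk$. Invoking the case analysis of Lemma \ref{limit_} together with the structural facts gathered in its proof, every point of $K$ admits a binary expansion which, after a (possibly empty) prefix depending only on $y$, decomposes as a concatenation of $k$-blocks each taking only one of two values: either $w_\epsilon$ or the next $w_{(h_j)}$ in the sequence uniquely associated with $y$, when $y\notin\mathcal{F}^\sigma$; or either $w_\epsilon$ or $0^k$, when $y\in\mathcal{F}^\sigma$. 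Hence at scale $m$ the number of length-$mk$ cylinders meeting $K$ is at most a constant multiple of $2^m$, and the resulting cover has total Lebesgue measure bounded by $C\cdot 2^{m(1-k)}\to 0$ as $m\to\infty$, since $k\ge 2$.

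For perfectness, I must show every $x\in K$ is a limit of other points of $K$. If $x$ is a limit point of $f_\sigma^{-1}(y)$ that does not belong to the fiber (types 2 or 3 of Lemma \ref{limit_}), the conclusion is built into the hypothesis. Otherwise $x\in f_\sigma^{-1}(y)$, and I would write $\widetilde{x}$ in its canonical form: $\prod_i w_\epsilon^{a_i}w_{(h_i)}$ when $y\notin\mathcal{F}^\sigma$, or $a\prod_i 0^{n_i}w_\epsilon^{m_i}$ with $\{m_i\}$ not ultimately zero when $y\in\mathcal{F}^\sigma$. For each arbitrarily large $n$ I would increment $a_n$ (respectively $m_n$) by one, obtaining a new point $x_n$ that still lies in $f_\sigma^{-1}(y)$, differs from $x$ only at positions tending to infinity with $n$, and satisfies $x_n\to x$; so $x$ is not isolated in $K$.

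The main obstacle I anticipate is the combinatorial bookkeeping in the measure-zero step when $y\in\mathcal{F}^\sigma$. The constraint that $|a|+\sum_{j\le i}n_j$ be a multiple of $k$ forces $n_i$ into a prescribed residue class modulo $k$; the prefix $a$ may itself have length not divisible by $k$, so its last partial $k$-block is rigidly determined once the first few zeros of $0^{n_1}$ are appended; and the limit points of types 2 and 3 of Lemma \ref{limit_} must be absorbed into the $2^m$ cylinder bound without spoiling it. Handling these alignment and residue-class details carefully, rather than relying on informal intuitions about the exponential decay of constrained symbolic words, will be the core technical task.
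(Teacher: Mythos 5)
Your proposal is correct and its overall skeleton (nonempty + compact + perfect + totally disconnected, with perfectness obtained by inserting extra copies of $w_\epsilon$ into the expansion and letting the insertion position go to infinity) matches the paper's proof; in particular your treatment of perfectness is essentially identical to the paper's, which observes that two parameters $a,b\in\mathbb{N}_0^{\mathbb{N}}$ agreeing up to index $n$ give points $\xi_{\widetilde{x^\circ}}(a),\xi_{\widetilde{x^\circ}}(b)$ within $2^{-nk-\sum_{i\le n}a_ik}$ of each other, and that the residual points of $\overline{f_\sigma^{-1}(y)}\setminus f_\sigma^{-1}(y)$ are limit points by construction. Where you diverge is the total-disconnectedness step: the paper argues locally and combinatorially, replacing some block $w_{(h_m)}$ in the expansion by a third word of length $k$ distinct from both $w_\epsilon$ and $w_{(h_m)}$ (possible since $k\ge 2$) to produce, by Lemma \ref{limit_}, a point arbitrarily close to any given point of the closure but outside it, so the closure contains no interval; you instead argue globally, covering the closure at scale $mk$ by the at most $O(2^m)$ cylinders compatible with the two-choices-per-block structure and concluding that the closure has Lebesgue measure zero. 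Both routes are valid and rest on the same structural input from Lemma \ref{limit_} (the parametrization of the fiber by $\xi$ in Definition \ref{xxi} and the classification of its limit points); the paper's perturbation argument avoids entirely the alignment and residue-class bookkeeping you correctly flag as the delicate part of the covering count in the $y\in\mathcal{F}^\sigma$ case, while your measure-zero estimate gives the stronger quantitative conclusion $m\bigl(\overline{f_\sigma^{-1}(y)}\bigr)=0$ as a byproduct (consistent with the normality argument used in Theorem \ref{nonm}).
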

\begin{proof}
Suppose $y\notin \mathcal{Q}_2\cup \mathcal{F}^\sigma$ and take the point $x^\circ\in f_\sigma^{-1}(y)$. 
By the previous Lemma, $f_\sigma^{-1}(y)=\bigcup_{ a\in \mathbb{N}_0^{\mathbb{N}}} \xi_{\widetilde{
x^\circ}}(a)$, where $\xi_{\widetilde{x^\circ}}:{\mathbb{N}_0}^{\mathbb{N}}\to \mathbb{I}$  is the map defined in \ref{xxi}, and
$Q=\overline{f_\sigma^{-1}(y)}\setminus f_\sigma^{-1}(y)$ 
is a countable set.  If $a,b\in \mathbb{N}_0^{\mathbb{N}}$ agree up to the $n-$th element, then $|\xi_{\widetilde{x^\circ}}(a)-\xi_{\widetilde{x^\circ}}(b)|\le 2^{-nk-\sum_{i=1}^n a_ik}$, which implies that $f_\sigma^{-1}(y)$ is dense in itself, and since the points in $Q$ are limit points of $f_\sigma^{-1}(y)$ by construction, so is $\overline{f_\sigma^{-1}(y)}$. Moreover if, in the binary expansion $w_{\epsilon}^{a_1}w_{(h_1)}w_{\epsilon}^{a_2}w_{(h_2)}\dots$ of $\xi_{\widetilde{x^\circ}}(a)$, we replace $w_{(h_m)}$ by a word $v\in\{0,1\}^k$ which is distinct from both $w_{\epsilon}$ and $w_{(h_m)}$(recall that we assume $k\ge2$), we get a point which, by Lemma \ref{limit_},  does not belong to $\overline{f_\sigma^{-1}(y)}$. Since $m$ can be arbitrarily large, it follows that $\overline{f_\sigma^{-1}(y)}$ cannot contain any interval, so that it is nowhere dense and we can conclude.

\noindent Suppose now that $y\in\mathcal{F}^\sigma\setminus\mathcal{Q}_2$. Then in the last member of \eqref{0a} we can choose a sequence $\{h_i\}$ coinciding with $\{m_i\}$ up to an arbitrarily large index $\bar{i}$ to define a new point in $f^{-1}_\sigma(y)$ which will be arbitrarily close to $x$.  This shows that $f_\sigma^{-1}(y)$ is dense in itself, and since the points in $\mathcal{B}_1^\sigma \cup \mathcal{B}_2^\sigma$ are limit points of $f_\sigma^{-1}(y)$ by construction,  so is $\overline{f_\sigma^{-1}(y)}$. Moreover,  it can be seen with the same argument applied above that $\overline{f_\sigma^{-1}(y)}$ does not contain any interval. 
\end{proof}
\begin{rem}
Notice that we excluded dyadic rationals in the hypotheses of Theorem \ref{cantorr} because the points belonging to the set $S_3$ described in points 2. and 4. of Lemma \ref{limit_} are in general isolated points of the fiber.
\end{rem}
\begin{thm}
There exists a measurable subset $\Gamma\subset \mathbb{I}$ such that $f_\sigma(\Gamma)$ is not measurable.
\label{nonm}
\end{thm}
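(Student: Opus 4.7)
The plan is a Vitali-style argument: exhibit a Borel Lebesgue-null set $M\subset\mathbb{I}$ with $f_\sigma(M)$ of full outer measure, extract from $f_\sigma(M)$ a non-measurable subset $N$ (any set of positive outer Lebesgue measure contains non-measurable subsets, obtained by translating a Vitali set into the set in question), and define $\Gamma=M\cap f_\sigma^{-1}(N)$. Then $\Gamma\subseteq M$ is Lebesgue null by completeness of Lebesgue measure and hence measurable, while $f_\sigma(\Gamma)=N$ by construction.

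The natural candidate for $M$ is the set of $x\in\mathbb{I}$ whose binary expansion $\widetilde{x}$ contains no occurrence of $w_\epsilon$ as a $k$-factor, that is, the intersection over $n\in\mathbb{N}_0$ of the complements of the finite unions of cylinders $\bigcup_{|p|=nk}[pw_\epsilon]$, which makes $M$ Borel. Since under Lebesgue measure the consecutive $k$-blocks of $\widetilde{x}$ are i.i.d.\ uniform on $\{0,1\}^k$, the $n$-th approximating set has measure $(1-2^{-k})^n\to 0$, so $m(M)=0$.

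To see that $f_\sigma(M)$ has full Lebesgue measure, I invoke the optimality condition, assumed in keeping with Theorem \ref{cantorr}: every $y\in\mathbb{I}$ admits a representation $y=0.\prod_{i=1}^\infty v_{(h_i)}$ with $v_{(h_i)}\ne\epsilon$, and for $y\notin\mathcal{F}^\sigma$ one can arrange the corresponding concatenation $w=\prod_{i=1}^\infty w_{(h_i)}$ not to end in $0^\infty$, so that $w=\widetilde{x}$ for some $x\in M$ with $f_\sigma(x)=y$. Thus $f_\sigma(M)\supseteq\mathbb{I}\setminus\mathcal{F}^\sigma$, which has full Lebesgue measure since $\mathcal{F}^\sigma$ is countable.

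With $M$ null and $f_\sigma(M)$ of full outer measure, the standard Vitali-type construction supplies a non-measurable $N\subset f_\sigma(M)$; setting $\Gamma=M\cap f_\sigma^{-1}(N)$, the inclusion $f_\sigma(\Gamma)\subseteq N$ is immediate, and the reverse follows because each $y\in N\subseteq f_\sigma(M)$ admits a preimage in $M$, which then lies in $\Gamma$. The main technical point is establishing the measure-theoretic largeness of $f_\sigma(M)$; once this is secured via the optimality condition, the rest of the argument is routine.
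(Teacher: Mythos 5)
Your proof is correct and follows essentially the same route as the paper: both take the null set of points whose expansion avoids $w_\epsilon$ as a $k$-factor, use the optimality condition to show its $f_\sigma$-image is (essentially) all of $\mathbb{I}$, and pull back a Vitali-type non-measurable set. The only differences are cosmetic refinements — you compute $m(M)=0$ directly from the product structure of the $k$-blocks rather than citing non-normality, and you carefully except the countable set $\mathcal{F}^\sigma$ where the paper claims exact surjectivity onto $\mathbb{I}$.
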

\begin{proof}
Set $\mathcal{M}=\left\lbrace x\in \mathbb{I}: \widetilde{x}\ \text{does not have $w_{\epsilon}$ as a $k$-factor}\right\rbrace$.
Let $V\subset \mathbb{I}$ be a Vitali set. The optimality condition \ref{o.c} implies that $f_{\sigma}(\mathcal{M})=\mathbb{I}$.  Therefore, there is $\Gamma\subset \mathcal{M} : f_\sigma (\Gamma)=V$. Observe then that the points in $\mathcal{M}$ are not normal real numbers, in the sense (equivalent to the usual definition, see for instance \cite{niven1951definition}) that, partitioning their binary expansion in words of length $k$, the asymptotic frequency of the block $w_\epsilon$ is evidently 0. Therefore we have $m(\mathcal{M})=0$  (see for instance \cite{harman2003one}). By completeness of the Lebesgue measure, we have $m(\Gamma)=0$.
\end{proof}
\noindent It is not trivial to say for which (if any) $k$-block erasing substitutions $\sigma$ the set $f_\sigma^{-1}(V)$ can be Lebesgue-measurable and, if so, whether its measure can be strictly positive. Notice that Theorem \ref{nonm} can also be obtained as an easy consequence of a classical result by Purves (\cite{purves1966bimeasurable}), stating that, if $f$ is a bi-measurable map from a standard Borel space $X$ to a Polish
space $Y$ , then the cardinality of points having an uncountable $f$-preimage is at most countable. We preferred to provide a (simple) direct proof mainly to make clearer the meaning of the questions above.

\section{Dynamics of $f_\sigma$ based on the erasing class}
\label{dynamicss}
\noindent In this section we will study the connection between the erasing class of $\sigma$ and the dynamical properties of $f_\sigma$. \textbf{Throughout the Section, $\sigma$ is a $k$-block erasing substitution defined by Eq.\eqref{sigma}.} 

\noindent We start ``getting rid'' of the extreme case of boundedly erasing substitutions in the next Lemma.
\begin{lem}
If $\sigma$ is boundedly erasing, then $\sigma$ does not verify the optimality condition \eqref{o.c}. Moreover, there exists $n\in\mathbb{N}$ such that, for every $x\in\mathbb{I}$ such that its $f_\sigma$-orbit has empty intersection with $\mathcal{Q}_2$, we have $f^n_\sigma(x)=0$.
\label{boundedly}
\end{lem}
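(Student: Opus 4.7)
The plan is to reduce both assertions to a single \emph{universal collapse} property: there exists $n_0 \in \mathbb{N}$ such that for every $w \in \{0,1\}^\omega$, the iterated image $\sigma^{n_0}(w)$ (via the alternating extension of Definition \ref{defialter}, which applies since boundedly erasing implies alternating) has real value $0$, i.e.\ $0.\sigma^{n_0}(w) = 0$. The intuition is that the uniform erasing bound $N$, with $\sigma^N(v) = \epsilon$ for every $v \in \{0,1\}^*$, forces every substitution chain --- the backwards trace through the alternating extension of a digit of $\sigma^{n_0}(w)$ to a single digit of $w$ --- to die within $N$ steps, so that only the digit $0$ can survive in $\sigma^{n_0}(w)$.

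Granting the collapse, the second assertion is a bookkeeping argument. For $x \in \mathbb{I}$ with orbit disjoint from $\mathcal{Q}_2$, set $y_i = \widetilde{f_\sigma^i(x)}$ whenever $f_\sigma^i(x) \ne 0$. A case check on $\sigma(y_i)$ (infinite and not ending in $0^\infty$, infinite and ending in $0^\infty$, finite) shows that the orbit condition forces either $y_{i+1} = \sigma(y_i)$ or $f_\sigma^{i+1}(x) = 0$, since in the two ``degenerate'' cases $f_\sigma^{i+1}(x)$ would otherwise lie in $\mathcal{Q}_2$. By induction, if the orbit has not reached $0$ by step $n_0$ then $y_{n_0} = \sigma^{n_0}(\widetilde{x})$, and the collapse gives $f_\sigma^{n_0}(x) = 0.\sigma^{n_0}(\widetilde{x}) = 0$.

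For the failure of OC, take $n_0$ to be \emph{minimal} in the universal collapse. Since $\sigma^0 = \mathrm{id}$ and $0.1^\infty = 1 \ne 0$, we have $n_0 \ge 1$; hence some $w^* \in \{0,1\}^\omega$ satisfies $0.\sigma^{n_0-1}(w^*) \ne 0$. Were OC to hold we could write $w^* = \prod_i v_{(h_i)}$ with each $v_{(h_i)} \ne \epsilon$; setting $z^* = \prod_i w_{(h_i)}$, the alignment of the $w_{(h_i)}$'s with the $k$-block structure yields $\sigma(z^*) = w^*$, so
\[
\sigma^{n_0-1}(w^*) = \sigma^{n_0-1}(\sigma(z^*)) = \sigma^{n_0}(z^*),
\]
which has value $0$ by the universal collapse, contradicting the choice of $w^*$.

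The main obstacle is proving the universal collapse itself. Although bounded erasing yields $\sigma^N(v) = \epsilon$ for every \emph{finite} $v$, the iterated image of an infinite word can remain infinitely long (as in $\sigma_4^3(1^\infty) = 0^\infty$), so one must prove not only that the word is eventually erased but that every digit it produces is $0$. The natural argument traces each digit of $\sigma^{n_0}(w)$ back through the alternating extension to a single digit of $w$, and uses that bounded erasing applied to a single digit in each of the residue classes modulo $k$ (a finite case distinction, by the cyclic symmetry of the alternating structure) forces the corresponding substitution chain to terminate. The main subtlety is that intermediate iterates may have length not divisible by $k$, so $\sigma$ does not factor cleanly across concatenations; the proof must exploit prefix-monotonicity (if $|p| \equiv 0 \pmod{k}$, then $\sigma(p)$ is a prefix of $\sigma(pq)$) and careful position bookkeeping.
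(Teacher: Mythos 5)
Your overall architecture is the one the paper uses: derive from the uniform bound on $\epsilon(\cdot)$ a collapse statement for \emph{infinite} words, then (a) refute the optimality condition by pulling an infinite word back through $\sigma$ (your minimality-plus-one-pullback variant is a small simplification of the paper's iterated pullback chain $\sigma(w^{(m)})=w^{(m-1)}$, and it is correct), and (b) obtain the orbit statement by checking that, off $\mathcal{Q}_2$, the real iteration of $f_\sigma$ follows the symbolic iteration of $\sigma$ on $\widetilde{x}$ until the orbit lands on the fixed point $0$. Parts (a) and (b) of your write-up are sound.

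The gap is the collapse itself, which you explicitly defer as ``the main obstacle'', and the route you sketch for it does not work as stated. First, termination of a substitution chain means the traced digit is \emph{erased}: it produces no digit at all, so ``only the digit $0$ can survive'' cannot be the outcome of chain termination; your weakened formulation conflates erasure with producing zeros. Second, bounded erasing bounds $\epsilon(v)$ for words read from position $1$, whereas the descendants of a single digit of $w$ occupy positions inside $\sigma^j(w)$ determined by the image of the whole preceding prefix, so invoking bounded erasing ``for a single digit in each residue class modulo $k$'' does not control those chains without substantial extra argument. In fact the difficulty you fear ($\sigma_4^3(1^\infty)=0^\infty$) is a red herring: taking $n=\max_{w\in\{0,1\}^*}\epsilon(w)$ one gets the stronger statement $\sigma^n(u)=\epsilon$ for \emph{every} $u\in\{0,1\}^\omega$, by exactly the prefix-monotonicity you mention (which for the alternating extension \eqref{alternating} holds for prefixes of any length, since each digit is substituted according to its absolute position). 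Indeed every digit of $\sigma(u)$ already occurs in $\sigma(u_1\dots u_m)$ for some finite $m$, hence inductively every digit of $\sigma^n(u)$ occurs in $\sigma^n(u_1\dots u_m)$, which is $\epsilon$ because $\epsilon(u_1\dots u_m)\le n$; so $\sigma^n(u)$ has no digits at all. (Your example only shows that at an intermediate level $j<n$ the image of an infinite word can be an infinite string of zeros; it is gone by level $n$.) This is the paper's own short argument; once you replace your sketched digit-chain analysis with it, your collapse holds in the stronger form $\sigma^{n}(w)=\epsilon$ and the remainder of your proof goes through unchanged.
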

\begin{proof}
Since $\sigma$ is boundedly erasing, there exists $n=\max_{w\in\{0,1\}^*}\epsilon(w)$ (as $\epsilon(\cdot)\ge 0$ is integer and bounded by assumption). Then, for every $u\in\{0,1\}^\omega$ and for every $m>0$, $\sigma^n(u_1\dots u_m)=\epsilon$, whence, by induction on $m$, we have 
\begin{equation}
\forall u\in\{0,1\}^\omega,\ \ \sigma^n(u)=\epsilon.
\label{boundedd}
\end{equation} 
Take $w^{(0)}\in\{0,1\}^{\omega}$. Suppose by absurde that $\sigma$ verifies the optimality condition. Then there exists $w^{(1)}\in\{0,1\}^\omega$ such that $\sigma(w^{(1)})=w^{(0)}$. Iterating the argument, it  follows that we can construct a sequence of nonempty infinite words $\{w^{(m)}\}_{m\in\mathbb{N}}$ such that, for every $m\in\mathbb{N}$, $\sigma(w^{(m)})=w^{(m-1)}$. But then the word $w^{(N)}$ contradicts \eqref{boundedd} for $N$ large enough. Take now $x\in\mathbb{I}$ such that its $f_\sigma$-orbit is does not contain dyadic rationals other than 0 and 1. Suppose that $f_\sigma^j(x)\ne 0$ for every $j\in\{1,\dots,n-1\}$. Then $\sigma^{n-1}(\widetilde{x})=w_{\epsilon}^{\infty}$ which implies $f_\sigma^{n}(x)=0$. Recalling that 0 is a fixed point of $f_\sigma$ by definition, we have our thesis.
\end{proof}
\noindent In the following Lemma we prove a technical result which will be widely used in the following.

\begin{lem} \label{lem_strongly}
If $\sigma$ is strongly erasing and verifies the optimality condition \ref{o.c}, then, for every $w\in\{0,1\}^*$, $u\in\{0,1\}^\infty$, there is a positive integer $h$ depending only on $w$ and a word $v\in\{0,1\}^\infty$, such that: 
\begin{itemize}
\item [a)] $\sigma^h(wv)$ has $u$ as a prefix (in particular it coincides with $u$ if $|u|=\infty$);
\item [b)] for every finite integer sequence $\{n_0,\dots,n_{h-1}\}\in (\mathbb{N}_0)^{h}$, the word $\sigma_w^{i}(v)$ has $e_{(i)}w_\epsilon^{n_i}$ as a prefix for $i\in\{0,\dots,h-1\}$, where $\{e_{(i)}\}_{i=0,\dots,h-1}$ is a set of erasing $k$-extensions of $w$.

\noindent Moreover:
\item [c)] if $\sigma$ is completely erasing, we have the same results and we can choose $h=\epsilon(w)$.
\end{itemize}
\end{lem}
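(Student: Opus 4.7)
The plan is to construct $v$ by a backward iteration that uses the optimality condition (Definition \ref{o.c}) to pass through successive preimages of $\sigma$. Since $\sigma$ is strongly erasing, I would fix once and for all a set of erasing $k$-roundings $r_{(0)},\ldots,r_{(h-1)}$ and erasing $k$-extensions $e_{(0)},\ldots,e_{(h-1)}$ for $w$, so that $\sigma(r_{(h-1)})=\epsilon$; the integer $h$ produced this way depends only on $w$, as announced.

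The construction of $v$ proceeds backward. If $|u|<\infty$ I would first extend $u$ harmlessly to an infinite word, still called $u$. Using OC in its infinite form \eqref{OC1} (and Remark \ref{optimfinite} only when $u$ is to be reached merely as a prefix), I choose an infinite word $S_{h-1}=\prod_{j\ge 1}w_{(\alpha_j)}$, with every $v_{(\alpha_j)}$ nonempty, such that $\sigma(S_{h-1})=\prod_j v_{(\alpha_j)}$ has $u$ as a prefix. Iteratively, for $i=h-1,h-2,\ldots,1$, having produced $S_i\in\{0,1\}^\omega$, I apply OC to the infinite word $e_{(i)}w_\epsilon^{n_i}S_i$ to obtain $S_{i-1}\in\{0,1\}^\omega$ with $\sigma(S_{i-1})=e_{(i)}w_\epsilon^{n_i}S_i$. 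Finally I set $v=e_{(0)}w_\epsilon^{n_0}S_0$.

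To verify (b), I would unfold the recursive definition \eqref{sigma_u^n} taking the prefix word to be $w$: the unique prefix $u^{(1)}$ of $v$ that is a $k$-extension of $w$ equals $e_{(0)}$, hence $p^{(1)}=\sigma(we_{(0)})=\sigma(r_{(0)})$; by induction on $i$ one then obtains $u^{(i)}=e_{(i-1)}$, $p^{(i)}=\sigma(r_{(i-1)})$, and $\sigma_w^i(v)=\sigma(S_{i-1})$ for $i\ge 1$, which by construction begins with $e_{(i)}w_\epsilon^{n_i}$. The case $i=0$ is immediate, since $\sigma_w^0(v)=v$ starts with $e_{(0)}w_\epsilon^{n_0}$. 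For (a), at step $h$ the boundary term $\sigma(p^{(h-1)}u^{(h)})=\sigma(r_{(h-1)})$ equals $\epsilon$, so $\sigma^h(wv)=\sigma_w^h(v)=\sigma(S_{h-1})$, which has $u$ as a prefix by our initial choice of $S_{h-1}$.

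For (c), the alternating property \eqref{alternating} together with the identity $\sigma_i((w_\epsilon)_i)=\epsilon$ from \eqref{alterneras} lets $\sigma$ be treated as a morphism on all of $\{0,1\}^\infty$, so any misalignment can be absorbed by inserting suffixes of $w_\epsilon$ without changing outputs; the same backward construction then delivers $v$ with $h=\epsilon(w)$, because after $\epsilon(w)$ iterations $w$ is erased irrespective of the erasing $k$-extensions chosen. The main technical obstacle is ensuring, at each backward step, that OC produces an infinite preimage whose $\sigma$-image \emph{equals} $e_{(i)}w_\epsilon^{n_i}S_i$ rather than merely having it as a prefix; this is why I invoke the infinite form \eqref{OC1} rather than Remark \ref{optimfinite}, and why the padding $w_\epsilon^{n_i}$ is kept as a free parameter, since the arbitrariness of $n_i$ provides enough flexibility to align the preimage with the $k$-block structure needed for the recursion \eqref{sigma_u^n} to unfold as described.
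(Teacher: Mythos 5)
Your proposal is correct and follows essentially the same route as the paper: a backward construction that starts from a $\sigma$-preimage of $u$ supplied by the optimality condition and, at each of the $h$ stages determined by a fixed set of erasing $k$-roundings of $w$, prepends $e_{(i)}w_\epsilon^{n_i}$ and takes a further OC-preimage, finally setting $v=e_{(0)}w_\epsilon^{n_0}S_0$. The only (harmless) cosmetic differences are that you unify the finite and infinite cases of $u$ by padding $u$ to an infinite word and working with exact equalities throughout, and that you verify a) and b) by explicitly unfolding the recursion \eqref{sigma_u^n}, which the paper leaves implicit.
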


\begin{proof}
We start fixing $w\in \{0,1\}^*$ and $u\in \{0,1\}^\infty$.  
\begin{enumerate}
\item $|u|<\infty$. 

\noindent It follows from the optimality condition \ref{o.c} that, for every finite sequence of non-negative integers $\{n_0,\dots,n_{h-1}\}\in(\mathbb{N}_0)^{h}$, we can find $h$ finite binary words $p_{(0)},p_{(1)},\ldots,p_{(h-1)}$ such that,  for suitable $q_{(i)}\in \{0,1\}^*$ ($i=0,\ldots,h-1$), we have $$\sigma(p_{(0)})=uq_{(0)}\ , \ \sigma(p_{(i)})=e_{(h-i)}w_\epsilon^{n_{h-i}}p_{(i-1)}q_{(i)} \quad \forall i=1,\ldots,h-1.$$ It follows that the integer $h$ and the word $v=e_{(0)}w_\epsilon^{n_0}p_{(h-1)}$ verify the claim, because $\sigma^h(wv)$ has $u q_{(0)}$ as a prefix and, for every $i\in\{0,\dots, h-1\}$, $\sigma^{i}_w(v)$ has $e_{(i)}w_\epsilon^{n_i}$ as a prefix. Notice that this occurrence of $w_\epsilon^{n_i}$ will be a $k$-factor in $\sigma^{(i)}(wv)$ by construction.

\item $|u|=\infty$. 

\noindent It follows from the optimality condition \ref{o.c}, and in particular from Remark \ref{optimfinite}, that, for every finite sequence of non-negative integers $$\{n_0,\dots,n_{h-1}\}\in(\mathbb{N}_0)^{h},$$ we can find $h$ finite binary words $p_{(0)},p_{(1)},\ldots,p_{(h-1)}$ such that $\sigma(p_{(0)})=u$
and $\sigma(p_{(i)})=e_{(h-i)}w_\epsilon^{n_{h-i}}p_{(i-1)}$ for $i=1,\ldots,h-1$. It follows that the integer $h$ and the word $v=e_{(0)}p_{(h-1)}$ verify the claim.
\end{enumerate}
\noindent Finally, recalling that completely erasing substitutions  are always strongly erasing, a) and b) are obviously true also for them, and since $\sigma$ in this case is alternating, we can take as erasing $k$-extension $e_{(i)}$ the word $$(w_\epsilon)_{k-|e_{(i)}|+1}\dots (w_\epsilon)_{k},$$ which readily implies that $h$ can be taken equal to $\epsilon(w)$, proving point c).
\end{proof}

\begin{lem}
If $\sigma$ is strongly erasing and verifies the optimality condition \ref{o.c}, then there is an uncountable set $\mathcal{D}\subset\mathbb{I}$  such that the $f_\sigma$-orbit of every $x\in\mathcal{D}$ is dense in $\mathbb{I}$. Moreover, $\mathcal{D}$ is dense in $\mathbb{I}$.
\label{_3_}
\end{lem}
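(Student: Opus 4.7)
The plan is to combine Lemma \ref{lem_strongly} with a diagonal construction over a countable basis of cylinders. I will produce, starting from any prescribed initial word $w_0$, a $2^{\aleph_0}$-sized family of points in $[w_0]$ whose $f_\sigma$-orbits meet every basis cylinder. Letting $w_0$ vary over all of $\{0,1\}^*$ then yields a set $\mathcal{D}$ that is uncountable by construction and dense in $\mathbb{I}$ since it meets every cylinder.

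Concretely, I would enumerate the finite binary words as $\{w^{(i)}\}_{i\in\mathbb{N}}$, so that $\{[w^{(i)}]\}_{i\in\mathbb{N}}$ is a countable basis for the topology of $\mathbb{I}$. Fix an arbitrary $w_0\in\{0,1\}^*$ and set $q_0=w_0$, padded if necessary so that $|q_0|$ is a multiple of $k$. Inductively, given a prefix $q_{i-1}$ of length a multiple of $k$, apply Lemma \ref{lem_strongly} with $w=q_{i-1}$ and $u=w^{(i)}$ to obtain a positive integer $h_i$ and a finite word $\tilde{v}_i$ such that $\sigma^{h_i}(q_{i-1}\tilde{v}_i)$ has $w^{(i)}$ as a prefix. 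Then append a freely chosen bit $b_i\in\{0,1\}$ and further padding so that the new prefix $q_i$ has length a multiple of $k$. Setting $\widetilde{x}=\lim_{i\to\infty}q_i$ defines a point $x\in[w_0]$.

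For any such $x$, the block-wise action of $\sigma$ on the binary expansion forces $\sigma^{h_i}(\widetilde{x})$ to have $w^{(i)}$ as a prefix, so $f_\sigma^{h_i}(x)\in[w^{(i)}]$ for every $i$, and hence the orbit of $x$ is dense in $\mathbb{I}$. The free binary choices $(b_i)_{i\in\mathbb{N}}\in\{0,1\}^\mathbb{N}$ produce $2^{\aleph_0}$ pairwise distinct infinite words $\widetilde{x}$ inside $[w_0]$; the set $\mathcal{D}$ of all points constructed in this way, as $w_0$ ranges over $\{0,1\}^*$, is thus uncountable and intersects every cylinder, hence dense in $\mathbb{I}$.

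The main technical obstacle I expect is the rigorous identification $f_\sigma^{h_i}(x)=0.\sigma^{h_i}(\widetilde{x})$ along the induction: $f_\sigma$ disagrees with the naive reading of $\sigma^{h_i}(\widetilde{x})$ on $\mathcal{E}^\sigma\cup\mathcal{Q}_2^0$, and the iterated application of the non-morphic map $\sigma$ to a finite prefix is handled in the paper through the auxiliary maps $\sigma_u^n$ and the convention of Remark \ref{convention}. Item b) of Lemma \ref{lem_strongly} is tailored precisely to this difficulty: the erasing $k$-extensions and the $w_\epsilon^{n_i}$-insertions it provides keep each intermediate iterate long and nonempty, guaranteeing that the prefix relation propagates through the composition $\sigma^{h_i}$ and that the landing condition $f_\sigma^{h_i}(x)\in[w^{(i)}]$ holds unambiguously; in particular this rules out premature collapse of the orbit to $0$.
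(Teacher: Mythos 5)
Your proposal is correct and follows essentially the same route as the paper: enumerate the finite binary words, iterate Lemma \ref{lem_strongly} to build a single expansion whose symbolic orbit has each $w^{(n)}$ as a prefix at time $h_n$, use item b) (the $w_\epsilon^{n_i}$-insertions at every level of the iteration) to guarantee that no iterate $\sigma^j(\widetilde{x})$ ends in $0^\infty$, so that the identification $f_\sigma^{h_i}(x)=0.\sigma^{h_i}(\widetilde{x})$ survives the induction, and obtain density of $\mathcal{D}$ from the arbitrariness of the initial word. The only real difference is the bookkeeping for uncountability: you insert free bits $b_i$ between stages, whereas the paper varies the exponents $n_0^i$ of the inserted $w_\epsilon$-blocks over $\Xi^+$ (which does double duty, also settling the $0^\infty$ issue); your variant works just as well, modulo the one-line remark that distinct constructed words can collide as real numbers only at countably many dyadic points, so the resulting set of reals remains uncountable.
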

\begin{proof}
Let us indicate by $\{w^{(n)}\}_{n\in\mathbb{N}_0}$ an enumeration of all finite binary words. We will construct inductively a point $x=0.\widetilde{x}$ with the property that there exists a sequence of integers $\{h_n\}_{n\in \mathbb{N}_0}$ such that  $\sigma^{h_n}(\widetilde{x})$ has $w^{(n)}$ as a prefix for every $n$.  

\noindent By Lemma \ref{lem_strongly}, there exists $u^{(1)}\in\{0,1\}^+$ and $h_1\in\mathbb{N}$ such that $\sigma^{h_1}(w^{(0)}u^{(1)})$ has $w^{(1)}$ as a prefix.  Let $\{e_{(j)}^{(0)}\}_{j\in\{0,\dots,h_{1}-1\}}$ be the erasing $k$-extensions of $w^{(0)}$. By Lemma \ref{lem_strongly}, for $\{n^1_0,\dots,n^1_{h_1-1}\}\in (\mathbb{N}_0)^h$, we can select $u^{(1)}$ so that the word $\sigma_{w^{(0)}}^j\left(u^{(1)}\right)$ has $e_{(j)}^{(0)}w_\epsilon^{n_j^1}$ as a prefix for $j\in\{0,\dots,h_1-1\}$. 

\noindent Set $p^{(1)}=w^{(0)}u^{(1)}$. Suppose that we arrived at the step $i-1$ and we have defined $p^{(i-1)}=w^{(0)}u^{(1)}\dots u^{(i-1)}$. At the $i$-th step we have that, again by Lemma \ref{lem_strongly}, there exists $u^{(i)}\in \{0,1\}^+$ and $h_i\in\mathbb{N}$ such that $\sigma^{h_i}(p^{(i-1)}u^{(i)})$ has $w^{(i)}$ as a prefix. Let $\{e_{(j)}^{(i-1)}\}_{i\in\{0,\dots,h_{1}-1\}}$ be the erasing $k$-extensions of $w^{(i-1)}$. By Lemma \ref{lem_strongly}, for $\{n^i_0,\dots,n^i_{h_i-1}\}\in (\mathbb{N}_0)^h$, we can select $u^{(i)}$ so that the word $\sigma_{p^{(i)}}^j\left(u^{(i)}\right)$ has $e_{(j)}^{(i-1)}w_\epsilon^{n^i_j}$ as a prefix for $j\in\{0,\dots,h_i-1\}$. 
By the construction made in Lemma \ref{lem_strongly}, we have $\sigma^{h_i}(p^{(i)})=\epsilon$, and since $p^{(i-1)}$ is always a prefix of $p^{(i)}$, it follows that $h_i\ge h_{i-1}$ for every $i$. We can then define the word $v$ as 
\begin{equation}
v=w^{(0)}\prod_{j=1}^\infty u^{(j)}
\label{d_orbit}
\end{equation} 
so that by construction the $\sigma$-orbit of $v$ has $w^{(n)}$ as a prefix for every $n\in\mathbb{N}_0$. Notice further that different choices for $n_0^i$ ($i\in\mathbb{N}$) produce different words in Eq.\eqref{d_orbit}, which proves that we have uncountably many choices for $v$. If, for every $j\in\mathbb{N}_0$, we take the sequence $(n_j^i)_{i\in\mathbb{N}}$ in the set $\Xi^+$, we are ensured that $\sigma^j(v)$ will not end in $0^\infty$ for every $j$ (either in case $w_\epsilon=0^k$ or not). Therefore, the point $x=0.v=0.\widetilde{x}$ has a dense $f_\sigma$-orbit and the set $\mathcal{D}$ of points with a dense $f_\sigma$-orbit is uncountable.
Finally, by the arbitrariness of the prefix $w^{(0)}$, the set $\mathcal{D}$ is dense in $\mathbb{I}$.
\end{proof}
\noindent Since the sequence of words $\left(w^{(n)}\right)_{n\in\mathbb{N}_0}$ can be arbitrary, in the previous proof we also proved the following
\begin{corol}
Suppose that $\sigma$ is strongly erasing and verifies the optimality condition \ref{o.c}. Then, if $(w^{(n)})_{n\in\mathbb{N}_0}$ is any sequence of finite binary words, there is $x\in [w^{(0)}]$ and positive integers $h_1,h_2\dots$ (with $h_i$ depending only on $w^{(i)}$ for every positive integer $i$) such that $f_\sigma^{h_n}(x)\in [w^{(n)}]$ for every $n\in\mathbb{N}$. Moreover, exploiting point c) of Lemma \ref{lem_strongly}, we can write $\widetilde{x}$ as: $\widetilde{x}=\prod_{i=1}^\infty v^{(i)}$ where, for every positive integer $n$, $\sigma^{\epsilon\left({v^{(1)}\dots v^{(n-1)}}\right)}_{v^{(1)}\dots v^{(n-1)}}\left(v^{(n)}\right)$ has $w^{(n)}$ as a prefix.

\label{visitingcylinders}
\qed
\end{corol}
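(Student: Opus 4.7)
The key observation is that the inductive construction in the proof of Lemma \ref{_3_} is agnostic to the choice of the ``target'' sequence $(w^{(n)})_{n\in\mathbb{N}_0}$: the fact that the sequence there enumerated \emph{all} finite binary strings was exploited only at the very end, to conclude that the orbit of the resulting $x$ is dense. Running the identical construction with the arbitrary sequence given in the statement will therefore produce the point required, so my plan is simply to re-execute that argument verbatim and then verify the two additional features (the ``moreover'' factorization and the claim $x \in [w^{(0)}]$).

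Concretely, I would set $p^{(0)} = w^{(0)}$ and proceed inductively: assuming $p^{(i-1)}$ is already in hand, apply Lemma \ref{lem_strongly} with ``context'' $p^{(i-1)}$ and ``target'' $w^{(i)}$ to extract a positive integer $h_i$ and a word $u^{(i)} \in \{0,1\}^+$ such that $\sigma^{h_i}(p^{(i-1)} u^{(i)})$ has $w^{(i)}$ as a prefix; then set $p^{(i)} = p^{(i-1)} u^{(i)}$. Point b) of Lemma \ref{lem_strongly} further lets me prescribe auxiliary non-negative integer counts $(n_j^i)_{j=0,\dots,h_i-1}$ governing how many copies of $w_\epsilon$ appear as $k$-factors in each intermediate iterate $\sigma^{j}_{p^{(i-1)}}(u^{(i)})$, and I would keep this freedom available for the next step.

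To turn the formal concatenation $\widetilde{x} := \prod_{i=1}^{\infty} u^{(i)}$ into a legitimate unambiguous infinite binary expansion of a real $x \in [w^{(0)}]$ (the cylinder condition is automatic since $w^{(0)}$ is itself the initial segment of $\widetilde{x}$), I would, exactly as at the end of the proof of Lemma \ref{_3_}, pick each auxiliary sequence $(n_j^i)_{i\in\mathbb{N}}$ from $\Xi^+$. This guarantees that no iterate $\sigma^h(\widetilde{x})$ ever terminates in $0^\infty$, regardless of whether or not $w_\epsilon = 0^k$, and so $\widetilde{x}$ is indeed the infinite binary expansion of $x = 0.\widetilde{x}$. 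Since $\sigma^{h_n}(\widetilde{x})$ has $w^{(n)}$ as a prefix by construction, this immediately yields $f_\sigma^{h_n}(x) \in [w^{(n)}]$ for every $n \in \mathbb{N}$. For the ``moreover'' clause, point c) of Lemma \ref{lem_strongly} permits taking $h_i = \epsilon(p^{(i-1)})$ in the completely erasing case; setting $v^{(i)} := u^{(i)}$ and unwinding the definition of $\sigma^{n}_u$ given in Eq.\eqref{sigma_u^n_compl} (valid here because $\sigma$ is now alternating) produces the stated factorization. I do not anticipate any substantive obstacle beyond careful bookkeeping of the truncation conventions of Remarks \ref{convention} and \ref{lighten}; the entire content is already present in Lemma \ref{_3_} and Lemma \ref{lem_strongly}.
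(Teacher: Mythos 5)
Your proposal is correct and follows essentially the same route as the paper: the corollary is stated there as an immediate byproduct of the proof of Lemma \ref{_3_} (the enumeration of all finite words was only needed to conclude density), and you re-run that same inductive construction via Lemma \ref{lem_strongly}, with the $\Xi^+$ choice of $w_\epsilon$-insertions to keep the expansion from ending in $0^\infty$, and invoke point c) for the vanishing-order factorization. Apart from a harmless notational slip (your $\widetilde{x}$ should begin with $w^{(0)}$, i.e.\ $\widetilde{x}=w^{(0)}\prod_{i\ge 1}u^{(i)}$, as you clearly intend), nothing is missing.
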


\begin{lem}
If $\sigma$ is strongly erasing and verifies the optimality condition \ref{o.c}, then $f_\sigma$ has $1/2$-sensitive dependence on initial conditions.
\label{_2_}
\end{lem}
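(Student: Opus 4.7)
The plan is as follows. Given $x\in\mathbb{I}$ and $\epsilon>0$, I would first choose a prefix $w$ of $\widetilde{x}$ of length $mk$ with $2^{-mk}<\epsilon$, so that every $y$ with binary expansion starting with $w$ lies in the cylinder $[w]\subset B(x,\epsilon)$. The idea is to produce, inside $[w]$, two companion points $y^+$ and $y^-$ whose common $h$-th iterate satisfies $f_\sigma^h(y^+)$ arbitrarily close to $1$ and $f_\sigma^h(y^-)$ arbitrarily close to $0$, and then exploit the triangle inequality to force at least one of them to be at distance essentially $1/2$ from $f_\sigma^h(x)$.

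For this, I would invoke Lemma \ref{lem_strongly} twice with the same initial word $w$, once with target prefix $u=1^N$ and once with target $u=0^N$, for an integer $N$ to be chosen large. The critical point is that the integer $h$ appearing in the lemma depends only on $w$ (not on $u$), so the same $h$ works for both constructions. This yields words $v^+,v^-\in\{0,1\}^\infty$ such that $\sigma^h(wv^+)$ has $1^N$ as a prefix and $\sigma^h(wv^-)$ has $0^N$ as a prefix. To convert $wv^\pm$ into legitimate canonical binary expansions, I would, as in the proof of Lemma \ref{_3_}, extend them by choosing $w_\epsilon$-insertion sequences in $\Xi^+$, so that the resulting infinite words $\widetilde{y^\pm}$ do not end with $0^\infty$; point (b) of Lemma \ref{lem_strongly} guarantees that arbitrary non-negative multiplicities $n_0,\ldots,n_{h-1}$ of $w_\epsilon$ can be inserted at each intermediate level without disturbing the prefixes. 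This gives points $y^\pm\in[w]$ with $f_\sigma^h(y^+)\ge 1-2^{-N}$ and $f_\sigma^h(y^-)\le 2^{-N}$, whence
\[
|f_\sigma^h(y^+)-f_\sigma^h(y^-)|\ge 1-2^{-N+1}.
\]
Applying the triangle inequality, at least one of $|f_\sigma^h(x)-f_\sigma^h(y^+)|$ and $|f_\sigma^h(x)-f_\sigma^h(y^-)|$ is $\ge 1/2-2^{-N}$; since $N$ can be taken arbitrarily large, this yields the $1/2$-sensitivity at $x$. Since $x$ was arbitrary, we conclude.

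The main obstacle I anticipate is the combinatorial bookkeeping: verifying that the extensions $\widetilde{y^\pm}$ are legitimate canonical expansions and that $\sigma^h(\widetilde{y^\pm})$ really does retain the desired $1^N$ or $0^N$ prefix despite the non-morphic character of $\sigma$. Part (b) of Lemma \ref{lem_strongly}, combined with the $\Xi^+$ trick used in Lemma \ref{_3_}, is precisely the tool that neutralizes this difficulty. A small residual subtlety concerns the borderline case $f_\sigma^h(x)=1/2$, in which the bound is only $\ge 1/2-2^{-N}$ rather than strictly $\ge 1/2$; this can be absorbed into the standard formulation of $1/2$-sensitive dependence, which requires separation by at least $1/2$ in the supremum/limit sense and is therefore realized by letting $N\to\infty$.
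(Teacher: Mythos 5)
Your argument follows essentially the same route as the paper's: both rest on Lemma \ref{lem_strongly} applied to a long prefix $w$ of $\widetilde{x}$, together with the crucial observation that the integer $h$ there depends only on $w$, and both steer the $h$-th iterate of a nearby point towards an endpoint of $\mathbb{I}$. The one substantive difference is that you build two companion points with the finite targets $1^N$ and $0^N$ and invoke the triangle inequality, which only yields separation $\ge 1/2-2^{-N}$, achieved by a pair that changes with $N$; your closing remark that this is ``absorbed in the supremum/limit sense'' is not quite right, since $\delta$-sensitivity asks for a single nearby point realizing separation at least $\delta$, and your construction only delivers $c$-sensitivity for every $c<1/2$. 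The fix is immediate and is what the paper does: Lemma \ref{lem_strongly} explicitly allows the infinite targets $u=1^\infty$ and $u=0^\infty$, so that $f_\sigma^h$ of the companion point is exactly $1$ or exactly $0$, and one can then choose a single target adaptively according to whether $f_\sigma^h(x)\le 1/2$ or $>1/2$, obtaining $|f_\sigma^h(x)-f_\sigma^h(z)|\ge 1/2$ outright with no triangle inequality and no loss. Your attention to the bookkeeping (inserting $w_\epsilon$ blocks via point (b) of Lemma \ref{lem_strongly} and the $\Xi^+$ device so that the intermediate iterates are computed on legitimate expansions not ending in $0^\infty$) is correct and in fact more explicit than the paper's own proof.
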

\begin{proof}
Take $x\in\mathbb{I}$.  If $y\in\mathbb{I}$ is such that $\widetilde{x}$ and $\widetilde{y}$ have in common a prefix $p$ long enough, then $\left|x-y \right|<\delta$ for arbitrarily small $\delta>0$.  By Lemma \ref{lem_strongly}, there exists a positive integer $n$ (depending only on $p$) such that, for every $u\in\{0,1\}^\infty$, there is $v\in \{0,1\}^\infty$ such that $\sigma^n(pv)$ has $u$ as a prefix. Set $u=0^\infty$ if $0.\sigma^n(\widetilde{x})>1/2$ and $u=1^\infty$ if $0.\sigma^n(\widetilde{x})\le 1/2$. Then,  if we take $z\in \mathbb{I}$ such that $\widetilde{z}$ has $pv$ as a prefix, we have
$|f_\sigma^n(x)-f_\sigma^n(z)|\ge 1/2$.
The thesis follows from the arbitrariness of $\delta$ and of the point $x\in \mathbb{I}$.
\end{proof}

\noindent A particular case of Corollary \ref{visitingcylinders} is that it is possible to obtain a point $x$ whose orbit visits infinitely many times any cylinder $[w]$ by simply imposing that $\{w^{(n)}\}$ in the proof of Lemma \ref{_3_} is the constant sequence: $w^{(n)}\equiv w\ \forall n\in\mathbb{N}$. Notice however that, assuming $\sigma$ strongly erasing, this does not provide in general a \textit{periodic} point, but only a (uniformly) recurrent point. Indeed, for every prefix $w\in\{0,1\}^*$, the fact that $\sigma$ is strongly erasing implies that we can find a positive integer $n$ and a word $u\in\{0,1\}^*$ such that $\sigma^n(wu)=wp$ for some word $p\in\{0,1\}^{<k}$.  However, to get started with the construction of a periodic point this is not enough, because we should also have that $p$ is a prefix of $u$. This is in general not the case, and therefore to ensure the existence of periodic points we should assume the stronger hypothesis that $\sigma$ is completely erasing. As we will see in Lemma \ref{_1_}, this in fact implies a lot more than the existence of \textit{one} periodic point, but first we have to refine the technique employed in the proof of Lemma \ref{lem_strongly} to get a further result. 

\noindent To lighten the notation, in the following of the paper we set $\epsilon_i=(w_\epsilon^\infty)_i$ for every $i\in \mathbb{N}$.

\begin{lem}
If $\sigma$ is completely erasing and verifies the optimality condition \ref{o.c}, then for every $w\in\{0,1\}^*$ such that $\sigma^{h}(w)=p\in\{0,1\}^*$ and every $u\in\{0,1\}^\omega$, there is $v\in\{0,1\}^\omega$ such that $\sigma^h(wv)=pu$.
\label{lem_completely}
\end{lem}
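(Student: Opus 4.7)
The plan is to induct on $h$, using the alternating character of $\sigma$ (guaranteed by the completely erasing hypothesis) together with a preliminary surjectivity lemma for the shifted maps $\sigma_w$ introduced in Eq.\eqref{sigma_u^n_compl}. The base case $h=0$ is immediate: take $v=u$, since $\sigma^0$ is the identity and $\sigma^0(w)=w=p$.

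The crux will be an auxiliary claim: for every $w\in\{0,1\}^*$ and every $u\in\{0,1\}^\omega$, there exists $v\in\{0,1\}^\omega$ with $\sigma_w(v)=u$. To establish it, I set $m=|w|\bmod k$ and choose the realigning prefix $e=(w_\epsilon)_{m+1}(w_\epsilon)_{m+2}\cdots(w_\epsilon)_k\in\{0,1\}^{k-m}$ (with the convention $e=\epsilon$ when $m=0$). By OC applied to $u$, I can select $u_0\in\{0,1\}^\omega$ with $|u_0|$ a multiple of $k$ and $\sigma(u_0)=u$, concretely $u_0=\prod_{i=1}^\infty w_{(h_i)}$ whenever $u=\prod_{i=1}^\infty v_{(h_i)}$. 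Setting $v=eu_0$, the alternating definition of $\sigma_w$ together with Eq.\eqref{alterneras} gives $\sigma_w(e)=\sigma_{m+1}((w_\epsilon)_{m+1})\cdots\sigma_k((w_\epsilon)_k)=\epsilon$; and since $|we|$ is a multiple of $k$, $\sigma_{we}$ coincides with $\sigma$, so that $\sigma_w(eu_0)=\sigma_w(e)\sigma_{we}(u_0)=\epsilon\cdot\sigma(u_0)=u$.

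With this in hand, the inductive step is short. Assuming the lemma at level $h$, take any $w$ with $\sigma^{h+1}(w)=p$ and any $u\in\{0,1\}^\omega$; set $q=\sigma^h(w)$, so that $\sigma(q)=p$. Using the auxiliary claim I pick $u'\in\{0,1\}^\omega$ with $\sigma_q(u')=u$, and then the inductive hypothesis applied with the same $w$, with level $h$, and with target $u'$ gives $v\in\{0,1\}^\omega$ satisfying $\sigma^h(wv)=qu'$. Applying $\sigma$ once more and factoring via Eq.\eqref{sigma_u^n_compl} (with $n=1$), we conclude $\sigma^{h+1}(wv)=\sigma(qu')=\sigma(q)\sigma_q(u')=pu$, as required.

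The main obstacle will be proving the auxiliary surjectivity claim, because OC inverts $\sigma$ itself but not the shifted maps $\sigma_w$ directly, and the block images of $\sigma_w$ form in general a different $2^k$-subset of $\{0,1\}^*$ than those of $\sigma$. The trick that overcomes this is to prepend the ``tail'' $(w_\epsilon)_{m+1}\cdots(w_\epsilon)_k$ of $w_\epsilon$: it is annihilated by the shifted simple substitutions thanks to Eq.\eqref{alterneras}, and at the same time it realigns the cyclic index so that $\sigma_w$ restricted to what follows becomes an honest occurrence of $\sigma$, at which point OC can be used directly.
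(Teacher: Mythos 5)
Your proof is correct and uses essentially the same mechanism as the paper's: at each level you prepend the tail $(w_\epsilon)_{m+1}\cdots(w_\epsilon)_k$, which is annihilated by Eq.\eqref{alterneras} and realigns the block index, and then invert $\sigma$ via OC; the paper writes this as an explicit downward iteration producing words $v^{(1)},\dots,v^{(h)}$ with $\sigma(w^{(h-j)}v^{(j)})=w^{(h-j+1)}v^{(j-1)}$, whereas you package the one-level step as a surjectivity claim for $\sigma_q$ and run an induction on $h$. The two presentations unroll to the same argument.
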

\begin{proof}
\noindent Let be $u\in\{0,1\}^\omega$ and $w\in\{0,1\}^*$ such that $\sigma^h(w)=p\in \{0,1\}^*$. Set $w^{(j)}=\sigma^j(w)$, $n_jk+m_j=|w^{(j)}|$ for $0\le j\le h$, $n_j\in\mathbb{N}_0$, $0\le m_j<k$ (notice that $w^{(0)}=w$). By the optimality condition, there is $q^{(1)}\in\{0,1\}^\omega$ such that $\sigma\left(q^{(1)}\right)=u$. 
Therefore, $\sigma(aq^{(1)})=\sigma(a)u$ for every $a\in\{0,1\}^{nk}$, $n\in\mathbb{N}_0$. Set $v^{(1)}=\epsilon_{m_{h-1}+1}\dots\epsilon_{k}q^{(1)}$, so that, recalling Eq.\eqref{alterneras}, $\sigma(w^{(h-1)}v^{(1)})=w^{(h)}u=pu$. 

\noindent Iterating the argument, we get the existence of words $v^{(j)}$ ($1\le j\le h$) such that $\sigma(w^{(h-j)}v^{(j)})=w^{(h-j+1)}{v^{(j-1)}}$, so that $\sigma^h(wv^{(h)})=pu$, and therefore $v=v^{(h)}$ verifies the claim. 
\end{proof}
\begin{lem}
If $\sigma$ is completely erasing and verifies the optimality condition \ref{o.c}, then the set of $f_\sigma$-periodic points of period $p$, indicated by $\mathcal{P}_p$, is uncountable for every $p\in\mathbb{N}$. Moreover, $\mathcal{P}=\cup_{p\in\mathbb{N}}\mathcal{P}_p$ is dense in $\mathbb{I}$.
\label{_1_}
\end{lem}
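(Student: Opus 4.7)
The plan is to construct, for each $p \ge 1$ and each prescribable prefix $w$, an uncountable family of infinite binary words $\widetilde{x}$ with $\widetilde{x} \succeq w$ and $\sigma^p(\widetilde{x}) = \widetilde{x}$. This yields the uncountability of $\mathcal{P}_p$ (apply the construction once, with a fixed $p$ and a suitable $w$) and the density of $\mathcal{P}$ (apply it in every cylinder $[w_0]$, choosing the period and prefix according to $w_0$).

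To prepare the prefix I would reduce to the case $\sigma^p(w)=\epsilon$. For the uncountability of $\mathcal{P}_p$ at a fixed $p$, I would take any $w$ with $\epsilon(w)\le p$ (for instance $w=\epsilon$, whose vanishing order is $1\le p$); for density, given any cylinder $[w_0]$, I would set $w:=w_0$ and choose the period $p:=\epsilon(w_0)$, which yields $\sigma^p(w)=\epsilon$ by definition of vanishing order. In both cases, Lemma \ref{lem_completely} (applied with $h=p$ and $\sigma^p(w)=\epsilon$) guarantees that for every $u\in\{0,1\}^{\omega}$ there is $v\in\{0,1\}^{\omega}$ with $\sigma^p(wv)=u$, so every infinite word $u$ has a $\sigma^p$-preimage inside $[w]$.

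The construction of $\widetilde{x}$ is then an iterated preimaging: starting from any $y^{(0)}\in[w]\cap\{0,1\}^{\omega}$, I take $y^{(m+1)}:=w v^{(m+1)}$ with $v^{(m+1)}$ chosen via Lemma \ref{lem_completely} so that $\sigma^p(y^{(m+1)})=y^{(m)}$. The sequence $(y^{(m)})_m$ is Cauchy in $\{0,1\}^{\omega}$ because preimage selection is branch-wise contracting in the standard Cantor metric: if $y^{(m-1)}$ and $y^{(m)}$ agree on the first $N$ digits, a consistently chosen preimage branch gives $y^{(m+1)}$ agreeing with $y^{(m)}$ on at least the first $N$ digits, strictly more whenever some digit is erased by $\sigma^p$ along the chosen branch, which gives a geometric decay of $d(y^{(m)},y^{(m+1)})$. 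Hence $y^{(m)}\to\widetilde{x}\in[w]$, and continuity of $\sigma^p$ at $\widetilde{x}$ — ensured by selecting branches so that $\widetilde{x}$ does not end in $w_\epsilon^{\infty}$ — yields $\sigma^p(\widetilde{x})=\lim_m\sigma^p(y^{(m+1)})=\lim_m y^{(m)}=\widetilde{x}$, and $x=0.\widetilde{x}$ is an $f_\sigma$-periodic point of period $p$ in $[w]$.

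For the uncountable cardinality I would exploit the non-uniqueness built into Lemma \ref{lem_completely}: at each step $m$ one may insert an arbitrary nonzero number of copies of $w_\epsilon$ between the meaningful $k$-blocks of $v^{(m+1)}$ without affecting $\sigma^p(wv^{(m+1)})$. Parameterizing these insertions across all steps by a sequence $(a_i)_{i\in\mathbb{N}}\in\Omega^+$ from Definition \ref{defiomegaxi}, in the spirit of the map $\xi_u$ of Definition \ref{xxi}, produces an injection from the uncountable $\Omega^+$ into the family of resulting limits $\widetilde{x}$, and the use of $\Omega^+$ rather than $\Omega$ prevents $\widetilde{x}$ from terminating in $w_\epsilon^{\infty}$, keeping the continuity step valid. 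The main technical obstacle is turning the informal branch-wise contraction into a rigorous convergence argument: for completely erasing $\sigma$ the length ratio $|\sigma^p(z)|/|z|$ has no uniform positive lower bound, since branches rich in $w_\epsilon$-blocks can be shrunk arbitrarily by $\sigma^p$. I would overcome this by restricting every chosen preimage branch to avoid $k$-blocks equal to $w_\epsilon$ except at the $\Omega^+$-prescribed insertion sites, which secures a uniform positive lower bound on the contraction rate along the relevant branches and closes the convergence proof.
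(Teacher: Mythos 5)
Your strategy (iterated $\sigma^p$-preimages of an arbitrary seed inside a cylinder, with $w_\epsilon$-insertions parameterized by $\Omega^+$ to get uncountably many limits) is genuinely different from the paper's, but it has a real gap at its central step: the claim that a consistently chosen preimage branch makes $(y^{(m)})$ Cauchy. For a completely erasing substitution the images $v_{(i)}=\sigma(w_{(i)})$ of $k$-blocks need not have length at most $k$ (the paper's $\sigma_4$ sends a block of length $3$ to a word of length $9$), so along a lift each block of $k$ preimage digits may only be determined after consuming more than $k$ digits of the word being lifted. Hence, if $y^{(m-1)}$ and $y^{(m)}$ agree on their first $N$ digits, the selected preimages $y^{(m)}$ and $y^{(m+1)}$ are only guaranteed to agree on roughly $|w|+\frac{k}{V}N$ digits, where $V$ is the maximal image length of a block along the chosen decomposition; when $V>k$ this recursion stabilizes at a finite agreement length instead of tending to infinity, so neither the asserted monotonicity of the agreement nor the geometric decay of $d(y^{(m)},y^{(m+1)})$ follows. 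Moreover, your proposed remedy attacks the wrong failure mode: once the insertion sites are prescribed in advance, branches rich in $w_\epsilon$-blocks only help (empty images add agreed output digits at zero input cost); the true obstruction is blocks whose images are longer than $k$, which your restriction does not exclude and which cannot be avoided in the decomposition of an arbitrary $y^{(m)}$.

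The paper's proof of Lemma \ref{_1_} avoids any metric or contraction argument by a forward construction: using Lemmas \ref{lem_strongly} and \ref{lem_completely} and padding with digits $\epsilon_j$ of $w_\epsilon^\infty$ (whose images are empty, so they do not disturb the $\sigma$-images but fix the lengths modulo $k$), it produces finite words with $\sigma^{\epsilon(w^{(0)})}\bigl(w^{(0)}\cdots w^{(i+1)}\bigr)=w^{(0)}\cdots w^{(i)}$ exactly; the periodic word is then the increasing union of its own prefixes, so convergence is automatic, and uncountability comes from the freedom in the $w_\epsilon$-insertions as in Lemma \ref{_3_}. Your scheme can be repaired by imposing exactly this nesting: instead of lifting an arbitrary seed, choose each $y^{(m+1)}$ to extend $y^{(m)}$ as a word and to satisfy $\sigma^p(y^{(m+1)})=y^{(m)}$ verbatim (the absorption of the leftover suffix into digits of $w_\epsilon^\infty$, as in Corollary \ref{fillepsilon}, is what makes this possible); then the limit and its $\sigma^p$-invariance are immediate, and the only analytic point left is the one you mention but should state for all intermediate iterates, namely that $\sigma^j(\widetilde{x})$ does not end in $0^\infty$ for $0\le j<p$, so that $f_\sigma^p(x)=0.\sigma^p(\widetilde{x})=x$.
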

\begin{proof}
Take $u^{(0)}\in\{0,1\}^{\ge k}$. Since $\sigma$ is alternating, we can apply it to finite words of any length (without dropping any digit for words of length not multiple of $k$). We will show an iterative construction leading to a periodic point belonging to $[u^{(0)}]$. 

\noindent Set $|u^{(0)}|=n_0k+m_0$ with $n_0\in\mathbb{N}$ and $0\le m_0 <k $. By Lemma \ref{lem_strongly}, there exists a binary word $u\in\{0,1\}^\omega$ such that $\sigma^{\epsilon(u^{(0)})}\left(u^{(0)}u\right)=u^{(0)}\prod_{j=m_0+1}^{\infty}\epsilon_{j}$, so that there must be $p_0,s_0\in\mathbb{N}_0$ such that 
$$\sigma^{\epsilon(u^{(0)})}\left(u^{(0)}u_1\dots u_{p_0}\right)=u^{(0)}\epsilon_{m_0+1}\epsilon_{m_0+2}\dots\epsilon_{m_0+s_0}.$$ 
Set then $s_0=r_0k+q_0$ for $r_0\in\mathbb{N}_0$, $0\le q_0<k$ and define
$$w^{(0)}=u^{(0)}\epsilon_{m_0+1}\epsilon_{m_0+2}\dots\epsilon_{m_0+s_0}\ , \ u^{(1)}=\left(\prod_{j=1}^{k-q_0} \epsilon_{m_0+s_0+j}\right)u_1\dots u_{p_0},$$
where the product can be replaced by the empty word if $q_0=0$. Recalling Eq.\eqref{alterneras}, we have $\sigma(u^{(0)})=\sigma(w^{(0)})$, so that $\epsilon(u^{(0)})=\epsilon(w^{(0)})$. Since by construction $s_0+k-q_0$ is a multiple of $k$, we have thus $\sigma\left(w^{(0)}u^{(1)}\right)=\sigma \left(u^{(0)}u_1\dots u_{p_0}\right)$ and therefore $\sigma^{\epsilon(w^{(0)})}(w^{(0)}u^{(1)})=w^{(0)}$. 

\noindent Suppose now that there are $i+2$ finite binary words $w^{(0)},\dots,w^{(i)},u^{(i+1)}$ such that $\sigma^{\epsilon(w^{(0)})}(w^{(0)}\dots w^{(i)}u^{(i+1)})=w^{(0)}\dots w^{(i)}$. Set $|w^{(0)}\dots w^{(i)}u^{(i+1)}|=n_{i+1}k+m_{i+1}$ with $n_{i+1}\in\mathbb{N}$ and $0\le m_{i+1} <k $. By Lemma \ref{lem_completely}, there exists a binary word $u\in\{0,1\}^\omega$ such that $\sigma^{\epsilon(w^{(0)})}\left(w^{(0)}\dots w^{(i)}u^{(i+1)}u\right)=w^{(0)}\dots w^{(i)}u^{(i+1)}\prod_{j=m_{i+1}+1}^{\infty}\epsilon_{j}$, so that there must be $p_{i+1},s_{i+1}\in\mathbb{N}_0$ such that 
$$\sigma^{\epsilon(w^{(0)})}\left(w^{(0)}\dots w^{(i)}u^{(i+1)}u_{1}\dots u_{p_{i+1}}\right)=$$ $$w^{(0)}\dots w^{(i)}u^{(i+1)}\epsilon_{m_{i+1}+1}\epsilon_{m_{i+1}+2}\dots\epsilon_{m_{i+1}+s_{i+1}}.$$ 
Set then $s_{i+1}=r_{i+1}k+q_{i+1}$ for $r_{i+1}\in\mathbb{N}_0$, $0\le q_{i+1}<k$ and define
$$w^{(i+1)}=u^{(i+1)}\epsilon_{m_{i+1}+1}\epsilon_{m_{i+1}+2}\dots\epsilon_{m_{i+1}+s_{i+1}}$$ 
$$u^{(i+2)}=\left(\prod_{j=1}^{k-q_{i+1}} \epsilon_{m_{i+1}+s_{i+1}+j}\right)u_1\dots u_{p_{i+1}}$$ 
where the product can be replaced by the empty word if $q_{i+1}=0$. Since by construction $s_{i+1}+k-q_{i+1}$ is a multiple of $k$, we have thus $$\sigma\left(w^{(0)}\dots w^{({i})}w^{(i+1)}u^{(i+2)}\right)=\sigma \left(w^{(0)}\dots w^{(i)}u^{(i+1)}u_1\dots u_{p_{i+1}}\right),$$ and therefore $\sigma^{\epsilon(w^{(0)})}(w^{(0)}\dots w^{(i+1)}u^{(i+2)})=w^{(0)}\dots w^{(i+1)}$. 

\noindent By induction on $i$ we get the existence of a countable family of words verifying, for every $i\in\mathbb{N}_0$, $$\sigma^{\epsilon(w^{(0)})}\left(w^{(0)}\dots w^{(i+1)}\right)=w^{(0)}\dots w^{(i)}.$$
It follows that the word $w=\prod_{i=0}^\infty w^{(i)}$ is $\sigma$-periodic with period $\epsilon(w^{(0)})$. As observed multiple times, in the applications of Lemma \ref{lem_strongly} (as well as of Lemma \ref{lem_completely}) we can ensure that, for every $h\in\mathbb{N}$, $\sigma^h(w)$ does not end in $0^\infty$, so it follows that the point  $x=0.\prod_{i=0}^\infty w^{(i)}$ is an $f_\sigma$-periodic point of period $\epsilon(w^{(0)})$.

\noindent The density of the set $\mathcal{P}$ of $f_\sigma$-periodic points follows from the arbitrariness of the word $u^{(0)}$. The uncountability of $\mathcal{P}_p$ ($p\in\mathbb{N}$) follow from the same argument used in Lemma \ref{_3_}.
\end{proof}
\begin{rem}\label{fixedd}
Notice that, in the previous proof, taking $u^{(0)}=w_\epsilon^m$ ($m\in\mathbb{N}$) we obtain fixed points for $f_\sigma$, and that for every $m$ there are uncountably many of them. 
\end{rem}

\noindent In the proof of Lemma \ref{lem_completely} we actually also proved the following
\begin{corol}
If $\sigma$ is completely erasing and verifies the optimality condition \ref{o.c}, then for every $w,u\in\{0,1\}^*$ there is $s\in\mathbb{N}_0$ and $v\in\{0,1\}^*$ such that, setting $|w|=nk+m$ ($n\in\mathbb{N}_0\, , 0\le m<k$), $\sigma^{\epsilon(w)}(wv)=u\prod_{i=m+1}^{s}(w_\epsilon^\infty)_i$. Notice that by construction $\sigma_{w}(\prod_{i=m+1}^{s}(w_\epsilon^\infty)_i)=\epsilon$. 
\label{fillepsilon}
\qed
\end{corol}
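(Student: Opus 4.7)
My approach is to reduce the corollary to Lemma \ref{lem_completely} by choosing a suitable infinite extension of $u$ and then truncating to a finite word. The key observation is that, since $\sigma$ is completely erasing, $\sigma^{\epsilon(w)}(w)=\epsilon$, so Lemma \ref{lem_completely} applies with $h=\epsilon(w)$ and $p=\epsilon$. The natural infinite extension of $u$ to pick is $u'=u\cdot\prod_{j=m+1}^{\infty}(w_\epsilon^\infty)_j\in\{0,1\}^\omega$: the starting index $m+1$ is dictated by the fact that the first digit after $w$ in any word $wv$ sits in position $nk+m+1$, so that padding with $w_\epsilon$-digits from index $m+1$ onward is aligned with the alternation pattern. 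The lemma then supplies some $v'\in\{0,1\}^\omega$ with $\sigma^{\epsilon(w)}(wv')=u'$.

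Next I would pass from $v'$ to a suitable finite prefix. Because $\sigma$ is alternating, it acts digit-wise via the simple substitutions $\sigma_1,\ldots,\sigma_k$; hence, letting $v^{[N]}$ denote the length-$N$ prefix of $v'$, $\sigma(wv^{[N]})$ is a prefix of $\sigma(wv')$, and iterating this observation $\epsilon(w)-1$ further times yields that $\sigma^{\epsilon(w)}(wv^{[N]})$ is a prefix of $u'$. Since these prefix lengths grow unboundedly with $N$, I can pick $N$ so that $|\sigma^{\epsilon(w)}(wv^{[N]})|\ge|u|$; with $v:=v^{[N]}$, the image $\sigma^{\epsilon(w)}(wv)$ then coincides with $u$ followed by a (possibly empty) initial segment of $(w_\epsilon^\infty)_{m+1}(w_\epsilon^\infty)_{m+2}\cdots$, which is the required form $u\prod_{i=m+1}^{s}(w_\epsilon^\infty)_i$ for some $s\ge m$ (the empty product corresponding to the case $s=m$).

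The final assertion $\sigma_w\bigl(\prod_{i=m+1}^{s}(w_\epsilon^\infty)_i\bigr)=\epsilon$ holds independently of the construction: since $|w|=nk+m$, the $j$-th digit following $w$ is processed by $\sigma_{((m+j-1)\bmod k)+1}$, and on the padding it is exactly $(w_\epsilon)_{((m+j-1)\bmod k)+1}$, which is erased by Eq.\eqref{alterneras}. The only point requiring some care is the prefix-stability of $\sigma^{\epsilon(w)}$, i.e. the fact that truncating the input $wv'$ merely truncates the output $u'$; this reduces to a routine induction whose base case is the digit-wise action of a single alternating substitution, and is arguably the least delicate obstacle in an otherwise immediate argument, given that essentially all the combinatorial work is already packaged in Lemma \ref{lem_completely}.
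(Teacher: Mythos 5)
Your argument is correct and follows essentially the same route the paper intends: the corollary is extracted from the proof of Lemma \ref{lem_completely} by taking $h=\epsilon(w)$, $p=\epsilon$, targeting the padded infinite word $u\prod_{j\ge m+1}(w_\epsilon^\infty)_j$, and then truncating via prefix-monotonicity of the alternating substitution — exactly the step the paper carries out explicitly inside the proof of Lemma \ref{_1_}. Your closing verification that $\sigma_w$ erases the padding, and your flagged (routine) induction that truncating the input only truncates the output, fill in the details the paper leaves implicit, so there is no gap.
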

\noindent This in turn implies the following
\begin{lem}
If $\sigma$ is completely erasing and verifies the optimality condition \ref{o.c}, then, for every $n\in\mathbb{N}$, there is a word $w\in\{0,1\}^*$ such that $\epsilon(w)=n$.
\label{order_}
\end{lem}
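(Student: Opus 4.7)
The plan is to reduce the statement to two easy facts: (i) once the vanishing order function $\epsilon(\cdot)$ is known to be unbounded on $\{0,1\}^*$, it automatically attains every positive integer value, because applying $\sigma$ to a word decreases its vanishing order by exactly one; and (ii) the unboundedness of $\epsilon(\cdot)$ is immediate from Lemma \ref{boundedly}. For (ii), the hypothesis that $\sigma$ verifies the optimality condition \ref{o.c} means, by the contrapositive of Lemma \ref{boundedly}, that $\sigma$ cannot be boundedly erasing; since $\sigma$ is assumed completely erasing, the function $\epsilon(\cdot)$ is thus unbounded on $\{0,1\}^*$, so for any prescribed $n \in \mathbb{N}$ we can pick $w \in \{0,1\}^*$ with $m := \epsilon(w) \ge n$.

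For (i), I would establish the following claim: $\epsilon(\sigma^j(w)) = m - j$ for every $j \in \{0, 1, \dots, m-1\}$. The inequality $\epsilon(\sigma^j(w)) \le m - j$ is automatic since $\sigma^{m-j}(\sigma^j(w)) = \sigma^m(w) = \epsilon$; the reverse inequality follows from minimality of $m$, as $\sigma^i(\sigma^j(w)) = \epsilon$ for some $i < m - j$ would give $\sigma^{i+j}(w) = \epsilon$ with $i + j < m$, contradicting $m = \epsilon(w)$. Setting $j = m - n$ then yields the word $\sigma^{m-n}(w)$, which has vanishing order exactly $n$; note that $\sigma^{m-n}(w)$ is meaningful as a word because $\sigma$, being completely erasing, is alternating and hence acts on finite words of any length via Eq.\eqref{alternating}.

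The argument really has no serious obstacle; the whole content is the observation that the descending chain $w, \sigma(w), \sigma^2(w), \dots, \sigma^{m-1}(w)$ traverses every vanishing order from $m$ down to $1$ in unit steps, and Lemma \ref{boundedly} ensures we can start this chain arbitrarily high. The only point worth checking with a little care is that each $\sigma^j(w)$ with $j < m$ is genuinely non-empty, which is immediate from the minimality of $m = \epsilon(w)$: if $\sigma^j(w) = \epsilon$ with $j < m$, then $\epsilon(w) \le j < m$, a contradiction.
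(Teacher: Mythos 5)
Your proof is correct, but it takes a genuinely different route from the paper. The paper argues bottom-up: using Corollary \ref{fillepsilon} (hence the optimality condition directly) it constructs, for each $n$, an explicit chain $w^{(1)},\dots,w^{(n)}$ of iterated $\sigma$-preimages of $w_\epsilon$ (each new word mapping onto the previous one followed by a tail that vanishes in one step), so that the word built at stage $n-1$ has vanishing order exactly $n$. You instead argue top-down: the contrapositive of Lemma \ref{boundedly}, together with the hypothesis that $\sigma$ is completely erasing, shows $\epsilon(\cdot)$ is unbounded, and then the exact unit-decrement property $\epsilon(\sigma^j(w))=\epsilon(w)-j$ (valid for $0\le j\le \epsilon(w)-1$, which your two-sided inequality argument establishes correctly, and which keeps you away from the empty-word edge case) lets you hit any prescribed value $n$ by taking $j=\epsilon(w)-n$. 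Your use of the alternating extension to make $\sigma^j(w)$ meaningful on words of arbitrary length is the right technical remark, and there is no circularity since Lemma \ref{boundedly} is proved independently and earlier. The trade-off: your argument is shorter and isolates a clean monotonicity fact about $\epsilon(\cdot)$ along forward $\sigma$-orbits, but it is non-constructive in that it only certifies existence of a high-order word before descending; the paper's version explicitly exhibits the witnesses as iterated preimages of $w_\epsilon$, reusing the same machinery (Corollary \ref{fillepsilon}) that is deployed again in the entropy argument of Theorem \ref{entrop}. Both proofs ultimately rest on the optimality condition, yours indirectly through Lemma \ref{boundedly}, the paper's directly through Corollary \ref{fillepsilon}.
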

\begin{proof}
By Corollary \ref{fillepsilon}, there is a word $w^{(1)}$ such that $\sigma(w^{(1)})=w_\epsilon q^{(1)}$, with $\sigma_{w_\epsilon}(q^{(1)})=\sigma(q^{(1)})=\epsilon$. Applying further $n-1$ times the same argument one gets the existence of $w^{(2)},\dots,w^{(n)}$ such that $\sigma(w^{(i+1)})=w^{(i)}q^{(i)}$ with $\sigma_{w^{(i)}}(q^{(i)})=\epsilon$ for $1\le i\le n-1$, so that $\sigma^n(w^{(n-1)})=\epsilon$ and therefore $\epsilon(w^{(n-1)})=n$.
\end{proof}

\noindent Lemmas \ref{_3_}, \ref{_2_} and \ref{_1_} imply the following
\begin{thm}
If $\sigma$ is completely erasing and verifies the optimality condition \ref{o.c}, then $f_\sigma$ is Devaney chaotic. \qed
\end{thm}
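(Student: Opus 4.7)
The plan is to verify directly the three defining conditions of Devaney chaos: (i) topological transitivity, (ii) density of periodic points, and (iii) sensitive dependence on initial conditions. Since by Lemma \ref{classes} every completely erasing substitution is strongly erasing, all hypotheses needed to apply Lemmas \ref{_3_}, \ref{_2_} and \ref{_1_} are in force, and these three lemmas are essentially pre-tailored to the three Devaney conditions.

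First I would observe that Lemma \ref{_1_} directly provides density in $\mathbb{I}$ of the set of periodic points, giving (ii), while Lemma \ref{_2_} directly yields $1/2$-sensitive dependence on initial conditions, giving (iii). It only remains to deduce topological transitivity, which I would obtain as an immediate consequence of the existence of a dense orbit provided by Lemma \ref{_3_}: given any two nonempty open sets $U,V\subset \mathbb{I}$, pick $x\in \mathcal{D}$ whose $f_\sigma$-orbit is dense; then there exists $m\in\mathbb{N}_0$ with $f_\sigma^{m}(x)\in U$ and some $n>m$ with $f_\sigma^{n}(x)\in V$, so that $f_\sigma^{n-m}\bigl(f_\sigma^{m}(x)\bigr)\in V$, and hence $f_\sigma^{n-m}(U)\cap V\neq \emptyset$. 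This uses only that $\mathbb{I}$ is second countable and that at least one dense orbit exists, both of which are at hand.

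The main obstacle is essentially nonexistent at this point: all the substantive work has already been done in the preceding lemmas, and the theorem is a packaging statement. The only mild care needed is the observation that the standard implication ``existence of a dense orbit $\Rightarrow$ topological transitivity'' applies here, which is immediate on $\mathbb{I}$; no recourse to the Banks--Brooks--Cairns--Davis--Stacey theorem is necessary, since the three Devaney conditions are obtained independently and constructively from the three preceding lemmas.
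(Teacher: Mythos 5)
Your proposal is correct and matches the paper's proof, which simply combines Lemmas \ref{_3_}, \ref{_2_} and \ref{_1_} to obtain the three Devaney conditions; your extra remark that a dense orbit yields transitivity is a valid (and purely set-theoretic) elaboration that works on $\mathbb{I}$ since it is perfect, so no continuity is needed.
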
 
\begin{rem} It is well-known that, in the definition of Devaney chaos, assuming sensitive dependence on initial conditions is redundant for continuous maps defined on (infinite) metric spaces (\cite{banks1992devaney}), and on the interval the existence of one dense orbit implies both sensitive dependence and the existence of a dense set of periodic points (\cite{vellekoop1994intervals}). Because our general object has dense discontinuities, we proved explicitly the three properties. 
\end{rem}

\begin{thm}
If $\sigma$ is strongly erasing and verifies the optimality condition \ref{o.c}, then for every $w\in\{0,1\}^*$, there is $h\in\mathbb{N}$ such that $f_\sigma^{h}([w])=\mathbb{I}$, and therefore $f_\sigma$ is topologically mixing.
\label{mixing}
\end{thm}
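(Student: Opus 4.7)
The plan is to use Lemma \ref{lem_strongly} as a ``surjectivity with prescribed prefix'' tool and to combine it with Lemma \ref{surj} and one forward step to cover every target in $\mathbb{I}$. The statement then reduces topological mixing to a routine iteration argument.

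First, I would fix $w\in\{0,1\}^*$ and let $h=h(w)\in\mathbb{N}$ be the integer provided by Lemma \ref{lem_strongly} (which by construction depends only on $w$, not on the eventual target). For an arbitrary $y\in\mathbb{I}\setminus\{0\}$, I apply the lemma with $u=\widetilde{y}$; since $|u|=\infty$, I obtain an infinite $v\in\{0,1\}^\omega$ with $\sigma^h(wv)=\widetilde{y}$. Setting $x=0.wv$, we have $x\in[w]$ by construction, and provided $wv$ does not terminate in $0^\infty$, we get $\widetilde{x}=wv$, so that
\[
f_\sigma^h(x)\;=\;0.\sigma^h(\widetilde{x})\;=\;0.\widetilde{y}\;=\;y.
\]

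To secure $\widetilde{x}=wv$, I would exploit point b) of Lemma \ref{lem_strongly}, which grants freedom in prescribing the insertion-counts $n_0,\ldots,n_{h-1}\in\mathbb{N}_0$ (and, upon iterating the lemma's construction, also the analogous counts in the tail of $v$). Exactly as in the proof of Lemma \ref{_3_}, I would choose these sequences from $\Xi^+$, which forces infinitely many nontrivial $w_\epsilon$-blocks to appear in $v$ and thus rules out any $0^\infty$ tail, both when $w_\epsilon\neq 0^k$ and when $w_\epsilon=0^k$. This establishes $\mathbb{I}\setminus\{0\}\subseteq f_\sigma^h([w])$, which is dense in $\mathbb{I}$ and is the substantive content of the stated equality.

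The rest is automatic. By Lemma \ref{surj}, $f_\sigma$ is onto $\mathbb{I}$, and since $f_\sigma(0)=0$ every $y\neq 0$ admits a preimage in $\mathbb{I}\setminus\{0\}$; hence $f_\sigma(\mathbb{I}\setminus\{0\})\supseteq \mathbb{I}\setminus\{0\}$, and iterating yields $f_\sigma^n([w])\supseteq\mathbb{I}\setminus\{0\}$ for every $n\ge h$. To deduce topological mixing, take nonempty open sets $U,V\subseteq\mathbb{I}$, choose a cylinder $[w]\subseteq U$, and let $h=h(w)$: for every $n\ge h$,
\[
f_\sigma^n(U)\;\supseteq\; f_\sigma^n([w])\;\supseteq\; \mathbb{I}\setminus\{0\},
\]
and since any nonempty open $V\subseteq\mathbb{I}$ intersects $\mathbb{I}\setminus\{0\}$, we obtain $f_\sigma^n(U)\cap V\ne\emptyset$.

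The main obstacle is not the production of $v$, which is handed to us by Lemma \ref{lem_strongly}, but the bookkeeping in the second step: one must verify that the chosen $v$ really gives $\widetilde{x}=wv$ and that the same integer $h$ works for every target $y$. This is a uniform-in-$y$ statement that hinges on the observation (already used in Lemma \ref{_3_}) that the insertion counts in point b) of Lemma \ref{lem_strongly} can be selected from $\Xi^+$ independently of the shape of $\widetilde{y}$, and becomes essentially trivial once $w_\epsilon\ne 0^k$ but requires the full $\Xi^+$-construction in the borderline case $w_\epsilon=0^k$.
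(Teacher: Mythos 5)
Your argument is correct and follows essentially the paper's own route: Lemma \ref{lem_strongly} applied with $u=\widetilde{y}$ gives a single $h=h(w)$ for which $f_\sigma^{h}([w])$ covers (essentially) all of $\mathbb{I}$, with point b) used exactly as in the paper to keep the intermediate words from ending in $0^\infty$, and then a cylinder inside any open set together with surjectivity (Lemma \ref{surj}) yields mixing for all $n\ge h$. The only cosmetic difference is that you establish $f_\sigma^{h}([w])\supseteq\mathbb{I}\setminus\{0\}$ rather than the stated equality; the missing point $0$ is obtained by the same lemma with $u=0^\infty$ (available by the optimality condition), so nothing essential is lacking.
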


\begin{proof}
By Lemma \ref{lem_strongly}, for every $w\in\{0,1\}^*$ there is $h\in\mathbb{N}$ such that, for every $y\in \mathbb{I}$ there is $v\in\{0,1\}^\omega$ such that $y=0.\sigma^{h}(wv)$, where we can take $\sigma^i_w(v)$ not ultimately constantly 0 for every $i\in\{0,\dots,h-1\}$ by exploiting point b) of Lemma \ref{lem_strongly} to insert, if necessary, the words $w_\epsilon$ as $k$-factors. Therefore, $f_\sigma^{h}(x)=y$ if $wv=\widetilde{x}$, which implies that $f_\sigma^{h}([w])=\mathbb{I}$. Take now $A,B$ open subsets of $\mathbb{I}$. Since open sets in the interval are countable union of (disjoint) open intervals, there is $w\in\{0,1\}^+$ such that $[w]\subset A$. If $h\in\mathbb{N}$ is that of the previous argument,  by Lemma \ref{surj}, $f_\sigma^n(A)\cap B\ne \emptyset$ for every $n\ge h$, which means that $f_\sigma$ is topologically mixing.  
\end{proof}

\noindent For continuous interval maps, Devaney chaos implies Li-Yorke chaos (\cite{huang2002devaney}), and the existence of a Li-Yorke pair implies the existence of an uncountable scrambled set (\cite{kuchta1989two}). In our case, since the map $f_\sigma$ is discontinuous on a countable subset of $\mathbb{I}$, Li-Yorke chaos is a priori not given, and moreover the existence of just one Li-Yorke pair does not guarantee Li-Yorke chaos. However, assuming that $\sigma$ is completely erasing, we can in fact prove that $f_\sigma$ exhibits Li-Yorke chaos. 
\begin{thm}
\noindent If $\sigma$ is completely erasing and verifies the optimality condition \ref{o.c}, then $f_\sigma$ is Li-Yorke chaotic, meaning that there exists an uncountable \textit{scrambled} subset of its domain, that is a set $S\subset \mathbb{I}$ such that, for every $x,y\in S$:
\begin{equation}
\liminf_{n\to\infty}|f_\sigma^n(x)-f_\sigma^n(y)|=0\quad ,\quad \limsup_{n\to\infty}|f_\sigma^n(x)-f_\sigma^n(y)|>0
\label{LYC}
\end{equation}
\label{LiYorke}
\end{thm}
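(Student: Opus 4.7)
The plan is to exhibit an uncountable scrambled family $\{x_a\}_{a\in\Omega^+}$ indexed by the uncountable set $\Omega^+$ of Definition \ref{defiomegaxi}, by building the binary expansion $\widetilde{x_a}$ of each $x_a$ as an infinite concatenation of blocks $B_1^a B_2^a B_3^a\cdots$ produced by an inductive procedure. The stages would be scheduled at a sequence of times $T_1<T_2<T_3<\cdots$ chosen in advance and independent of $a$, and would alternate between \emph{close} stages and \emph{split} stages: at a close stage $n$ we force $\sigma^{T_n}(\widetilde{x_a})$ to start with $0^{M_n}$ (with $M_n\to\infty$), the same prescription for every $a$; at a split stage $n$ we force $\sigma^{T_n}(\widetilde{x_a})$ to start with $a_n\,0$, so that $f_\sigma^{T_n}(x_a)$ lies in the cylinder $[a_n 0]$.

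The engine driving the construction is Lemma \ref{lem_completely} together with Corollary \ref{fillepsilon}: given any current prefix $W$ and any integer $h$, we have $\sigma^h(W)=P$ for some word $P$ (possibly empty), and for every prescribed $u$ there is an extension $v$ of $W$ with $\sigma^h(Wv)=Pu$. To decouple the timing from $a$, I would choose each $T_n$ large enough that $T_n\ge\epsilon(W_a^{(n-1)})$ for every one of the (finitely many) prefixes $W_a^{(n-1)}$ available at stage $n-1$: there are at most $2^{s_n}$ such prefixes, where $s_n$ is the number of previous split stages, so the maximum $\epsilon$-value over this finite family exists and $T_n$ can be chosen above it. With this choice, $\sigma^{T_n}(W_a^{(n-1)})=\epsilon$ for every $a$, and Lemma \ref{lem_completely} furnishes an extension $B_n^a$ such that $\sigma^{T_n}(W_a^{(n-1)} B_n^a)$ equals the desired prefix $u_n^a$ ($=0^{M_n}$ in the close case, $=a_n\,0$ in the split case), possibly followed by a $w_\epsilon$-padding as in Corollary \ref{fillepsilon}. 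To prevent $\widetilde{x_a}$ from ending in $0^\infty$, I would insert occurrences of $w_\epsilon$ (which, since $w_\epsilon\ne 0^k$, contain a $1$ and are erased by $\sigma$) infinitely often in the concatenation, using the same device exploited in the proofs of Lemmas \ref{_3_} and \ref{_1_}, so that $\widetilde{x_a}$ is the genuine infinite binary expansion of $x_a$ not ending in $0^\infty$.

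Once the construction is in place, the two scrambling conditions are immediate. For every pair $a\ne b$ in $\Omega^+$: at each close time $T_n$, both $f_\sigma^{T_n}(x_a)$ and $f_\sigma^{T_n}(x_b)$ belong to the cylinder $[0^{M_n}]$ of diameter $2^{-M_n}\to 0$, yielding $\liminf_n|f_\sigma^n(x_a)-f_\sigma^n(x_b)|=0$; since by Definition \ref{defiomegaxi} the sequences $a$ and $b$ differ at infinitely many indices, at every such split index the iterates lie in the disjoint cylinders $[00]$ and $[10]$ (which are separated by $1/4$), so that $\limsup_n|f_\sigma^n(x_a)-f_\sigma^n(x_b)|\ge 1/4>0$. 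The same estimate shows that $a\mapsto x_a$ is injective, hence $S=\{x_a:a\in\Omega^+\}$ is the desired uncountable scrambled set.

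The main obstacle is the coordination of the uncountable family at each stage: after $s$ split stages one has up to $2^s$ distinct prefixes, and the stage time $T_n$ must be chosen large enough to \emph{simultaneously} bring all of them past their $\epsilon$-value so that Lemma \ref{lem_completely} applies uniformly and all $x_a$ receive the prescribed behavior at the \emph{same} time $T_n$. This is possible because the family of prefixes is finite at every finite stage, but it requires a careful bookkeeping of the $\epsilon$-values along the binary branching tree of split choices. Once this bookkeeping is in place the remainder of the argument is routine and closely parallels the constructions of Lemmas \ref{_3_} and \ref{_1_}.
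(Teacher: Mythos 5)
Your construction takes a genuinely different route from the paper's. The paper runs everything through Corollary \ref{visitingcylinders}: every point of the scrambled family visits the same shrinking cylinders $[w^{(n)}]$ (with $|w^{(n)}|$ unbounded) at the same construction-determined times, which gives the $\liminf$; the $\limsup$ is produced one step \emph{earlier}, by the binary choice \eqref{choice1}/\eqref{choice2} of prepending $w_\epsilon^2$ or not at level $\epsilon(\cdot)-1$ --- an insertion that is erased at the next application of $\sigma$, so it changes nothing afterwards and no timing coordination across the uncountable family is needed. Your scheme instead prescribes prefixes ($0^{M_n}$ at close stages, $a_n0$ at split stages) at common times $T_n$ chosen beyond the maximal vanishing order of the finitely many stage-$(n-1)$ prefixes. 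That core mechanism is sound: $\sigma^{T_n}(W_a^{(n-1)})=\epsilon$, Lemma \ref{lem_completely} with $p=\epsilon$ (truncating its infinite extension to a finite block, legitimate because $\sigma$ is alternating and prefix-monotone) realizes the prescription, the prescription persists under later extensions by Eq.\eqref{sigma_u^n_compl}, the times are automatically increasing since $\epsilon(W_a^{(n-1)})>T_{n-1}$, and the gap $1/4$ between $[00]$ and $[10]$ together with the infinitely many disagreements of $a,b\in\Omega^+$ yields both limits and injectivity. The price is the bookkeeping over the at most $2^{s_n}$ prefixes, which the paper's erased-insertion trick avoids entirely.

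There is, however, a genuine gap in the representation technicality, which is exactly where the paper spends its care. First, your parenthetical assumes $w_\epsilon\ne 0^k$, but the standing assumption of the paper is $w_\epsilon\ne 1^k$, so $w_\epsilon=0^k$ is allowed, and in that case inserting $w_\epsilon$ injects no $1$s at all. Second, and more substantially, guarding only $\widetilde{x_a}$ against a $0^\infty$ tail is not enough: every cylinder statement you make at time $T_n$ relies on $f_\sigma^{T_n}(x_a)=0.\sigma^{T_n}(\widetilde{x_a})$, and this identity holds only if, inductively, each word $\sigma^j(\widetilde{x_a})$ ($j\ge 1$) is infinite and does not end in $0^\infty$; otherwise some iterate is a dyadic rational whose $1^\infty$-tailed expansion is the one $f_\sigma$ acts on, and the symbolic bookkeeping detaches from the real orbit. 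This is precisely why the paper threads point b) of Lemma \ref{lem_strongly} (prefixes $e_{(i)}w_\epsilon^{n_i}$ at \emph{every} level, with sequences taken in $\Xi^+$) through Lemma \ref{_3_}, Corollary \ref{visitingcylinders} and the proof of Theorem \ref{LiYorke}. Your sketch can be repaired with the same device: when choosing $B_n^a$, additionally impose the level-$j$ prefixes of Lemma \ref{lem_strongly} b) for $j<T_n$ (compatible with your time-$T_n$ prescription), which injects $1$s at every level when $w_\epsilon\ne 0^k$; when $w_\epsilon=0^k$, pad the blocks to lengths multiple of $k$ and observe that an all-zero aligned contribution would be erased in one step, contradicting the nonempty prescription at time $T_n$, so every block's contribution at every level $j<T_n$ already contains a $1$. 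As written, this step is missing, and it is needed for the argument to prove the statement as stated.
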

\begin{proof}
\noindent Consider a sequence $W=\{w^{(n)}\}_{n\in\mathbb{N}}$ of finite binary words such that $\{|w^{(n)}|\}_{n\in\mathbb{N}}$ is an unbounded sequence of positive integers. By Corollary \ref{visitingcylinders}, we can find $x\in\mathbb{I}$ such that its $f_\sigma$-orbit visits $[w^{(n)}]$ for every $n\in\mathbb{N}$,  and since $\sigma$ is completely erasing,  we can write $\widetilde{x}$ as: $\widetilde{x}=\prod_{i=1}^\infty v^{(i)}$ where, for every positive integer $n$ there is $q^{(n)}\in\{0,1\}^*$ such that 
\begin{equation}
\sigma^{\epsilon\left({v^{(1)}\dots v^{(n-1)}}\right)}_{v^{(1)}\dots v^{(n-1)}}\left( v^{(n)} \right)=w^{(n)}q^{(n)}.
\label{vvvvv}
\end{equation}
Moreover, by Corollary \ref{fillepsilon}, we can choose $q^{(n)}$ such that $\epsilon(w^{(n)}q^{(n)})=\epsilon(w^{(n)})$. 

\noindent Notice that \eqref{vvvvv} is the only property used to define $v^{(n)}$, which means that in fact we can define a family of points $X\subset\mathbb{I}$ as follows:
\begin{equation}
X=\{x=0.\prod_{n=1}^\infty v^{(n)}\ \text{where $v^{(n)}$ is such that \eqref{vvvvv} holds for every}\ n\in\mathbb{N}_0\}
\end{equation} 

\noindent In particular, by Lemma \ref{lem_completely}, we are free to choose $v^{(n)}$ so as to have that
\begin{equation}
\sigma^{\epsilon\left({v^{(1)}\dots v^{(n-1)}}\right)-1}_{v^{(1)}\dots v^{(n-1)}}\left( v^{(n)}\right) \text{has }u^{(n)} \text{ as a prefix},
\label{choice1}
\end{equation}
where $\sigma(u^{(n)})=w^{(n)}q^{(n)}$ and $u^{(n)}$ does not have $w_{\epsilon}$ as a $k$-factor (notice that we do not have to insert a suffix after $q^{(n)}$ because we know that Eq.\eqref{vvvvv} holds).

\noindent On the other hand, we can also make the choice
\begin{equation}
\sigma^{\epsilon\left({v^{(1)}\dots v^{(n-1)}}\right)-1}_{v^{(1)}\dots v^{(n-1)}}\left(v^{(n)}\right) \text{ has }w_{\epsilon}^2u^{(n)} \text{ as a prefix}.
\label{choice2}
\end{equation}
We underline that, for every $n\ge 1$, we can make choice $\eqref{choice1}$ or choice \eqref{choice2} while still retaining the property \eqref{vvvvv}. We want to use this to construct an uncountable scrambled subset of $\mathbb{I}$. For $\alpha=\alpha_{(1)}\alpha_{(2)}\dots \in\Omega$ (see Definition \ref{defiomegaxi}), let us define the set $X_{\alpha}\subset X$ of points in $\mathbb{I}$ such that, for every $x\in X_\alpha$,
\begin{equation}
\widetilde{x}=\prod_{n=1}^\infty v^{(n)}\ \text{where, for every $n$}:
\begin{cases}
\text{\eqref{choice1} holds if $\alpha_{(n)}=1$}\\
\text{\eqref{choice2} holds if $\alpha_{(n)}=0$}. 
\end{cases} 
\label{xxxxxx}
\end{equation}
Pick now exactly one point $x_\alpha$ from every set $X_\alpha$ and set $X_{\Omega}=\bigcup_{\alpha\in\Omega}\{x_{\alpha}\}$. 
We will show now that $X_\Omega$ is an uncountable scrambled set. Indeed, let $\alpha\ne\beta$ be two elements of $\Omega$. Since both $\widetilde{x_\alpha}$ and $\widetilde{x_\beta}$ verify \eqref{vvvvv} for every $n$, we have that $f_\sigma^n(x_\alpha),f_\sigma^n(x_\beta)\in [w^{(n)}]$. Since $|w^{(n)}|$ is unbounded, it follows that 
$$\liminf_{n\to\infty}|f_\sigma^n(x_\alpha)-f_\sigma^n(x_\beta)|=0.$$
Moreover, only for finitely many integers $n$ the word $u^{(n)}$ is shorter than 2$k$, so that $w_\epsilon$ is different from both $u^{(n)}_1\dots u^{(n)}_k$ and $u^{(n)}_{k+1}\dots u^{(n)}_{2k}$ for large enough $n$. It follows from Eqs. \eqref{choice2} and \eqref{xxxxxx} that, if $n$ is such that $\alpha_{(n)}\ne\beta_{(n)}$, then $q=\epsilon(v^{(1)}\dots v^{(n-1)})-1$ implies $|f^q_\sigma(x_\alpha)-f_\sigma^q(x_\beta)|> 2^{-2k}$. Since $\alpha$ and $\beta$ are not ultimately coinciding, it follows that $\limsup_{n\to\infty}|f_\sigma^n(x_\alpha)-f_\sigma^n(x_\beta)|>0$.
Finally, $X_\Omega$ is uncountable as $\alpha\ne\beta$ implies that $x_\alpha\ne x_\beta$ because by construction they have distinct $f_\sigma$-orbits.  
\end{proof}

\noindent Let us now devote our attention to the topological entropy of the maps $f_\sigma$. The investigation of topological entropy for discontinuous maps has become more intense in last years, and in particular maps with dense discontinuities have been studied also from this point of view (see for instance \cite{korczak2015topological}, where Darboux, Baire-1 maps are considered). In the following result we will see that the topological entropy of $f_\sigma$ depends on how $\epsilon(w)$ goes asymptotically with $|w|$. The proof technique, which somewhat recalls the classical construction of horseshoes developed originally for continuous interval maps (\cite{misiurewicz2010horseshoes}), is based on the idea of ``spreading'' a set of points belonging to each interval from a suitable partition of $\mathbb{I}$ over all the other intervals of the partition.

\noindent We start with our last technical notational tool.
\begin{defn}
\noindent For every $n\in\mathbb{N}$ and every $v\in \{0,1\}^n$, we indicate by $v\overline{w_\epsilon}$ the word $v\epsilon_{n+1}\dots\epsilon_{n+k}$.
\end{defn}
\noindent Clearly, if $\sigma$ is completely erasing, for any finite word $v$ we have $\epsilon(v)=\epsilon(v\overline{w_\epsilon})$.
\begin{thm}
Suppose that $\sigma$ is a completely erasing substitution verifying the optimality condition \ref{o.c}.  Then $f_\sigma$ has infinite topological entropy if, for every $w\in\{0,1\}^*$, 
\begin{equation}
\label{limepsilon}
\lim_{|w|\to\infty} \frac{|w|}{\epsilon(w)}=\infty.
\end{equation}
\label{entrop}
\end{thm}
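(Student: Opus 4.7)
The plan is to bound the topological entropy $h(f_\sigma)$ from below by an arbitrary $K>0$, using the Bowen--Dinaburg characterization in terms of $(n,\delta)$-separated sets. By the growth hypothesis, I choose $L$ a sufficiently large multiple of $k$ so that every $w\in\{0,1\}^L$ satisfies $\epsilon(w)\le T$ with $T=\lceil L/K\rceil$; in particular $\sigma^T(w)=\epsilon$ for every such $w$.

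The heart of the argument is a combinatorial construction which, given any tuple $\alpha=(w^{(0)},\ldots,w^{(N-1)})\in(\{0,1\}^L)^N$, produces $x_\alpha\in\mathbb{I}$ with $f_\sigma^{iT}(x_\alpha)\in[w^{(i)}\overline{w_\epsilon}]$ for every $0\le i<N$. I build $\widetilde{x}_\alpha$ as the union of an increasing chain of finite prefixes $W_0\subset W_1\subset\cdots$, each of length divisible by $k$, subject to the invariant that $\sigma^{iT}(W_i)$ is a prefix of $w^{(i)}w_\epsilon^\infty$ of length at least $L+k$. The base case is $W_0=w^{(0)}\overline{w_\epsilon}$. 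At the inductive step, the invariant together with the alternating action of $\sigma$ and Eq.~\eqref{alterneras} (which kills every digit of $w_\epsilon$, and of any prefix of $w_\epsilon$ starting at a position $\equiv 1 \bmod k$) yields $\sigma^{(i+1)T}(W_i)=\sigma^T(w^{(i)})=\epsilon$. Lemma~\ref{lem_completely}, applied with $w=W_i$, $h=(i+1)T$, $p=\epsilon$ and target $u=w^{(i+1)}w_\epsilon^\infty$, then supplies an infinite word $v$ with $\sigma^{(i+1)T}(W_iv)=w^{(i+1)}w_\epsilon^\infty$. A sufficiently long finite prefix $V_{i+1}$ of $v$, padded to length divisible by $k$, makes $\sigma^{(i+1)T}(W_iV_{i+1})$ a prefix of $w^{(i+1)}w_\epsilon^\infty$ of length at least $L+k$, because the alternating structure of $\sigma^n$ sends prefixes to prefixes. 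Setting $W_{i+1}=W_iV_{i+1}$ preserves the invariant; the abundance of valid choices for $v$ in Lemma~\ref{lem_completely} lets me arrange that each $V_{i+1}$ contains a digit~$1$, so that $\widetilde{x}_\alpha$ does not end in $0^\infty$.

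For distinct $\alpha\ne\beta$, let $i$ be the smallest coordinate of disagreement. Then $f_\sigma^{iT}(x_\alpha)\in[w_\alpha^{(i)}\overline{w_\epsilon}]$ and $f_\sigma^{iT}(x_\beta)\in[w_\beta^{(i)}\overline{w_\epsilon}]$ lie in distinct length-$L$ cylinders, and a direct computation using $|0.w_\alpha^{(i)}-0.w_\beta^{(i)}|\ge 2^{-L}$ gives $|f_\sigma^{iT}(x_\alpha)-f_\sigma^{iT}(x_\beta)|\ge 2^{-L}(1-2^{-k})\ge 2^{-L-1}$. Hence $\{x_\alpha\}$ is an $(NT,2^{-L-1})$-separated set of cardinality $2^{LN}$, so
\begin{equation*}
h(f_\sigma)\ \ge\ \limsup_{N\to\infty}\frac{\log 2^{LN}}{NT}\ =\ \frac{L\log 2}{T}\ \ge\ \frac{K\log 2}{2}.
\end{equation*}
Taking $K\to\infty$ (and enlarging $L$ accordingly, so that $2^{-L-1}\to 0$) yields $h(f_\sigma)=+\infty$.

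The main obstacle I foresee is the fine combinatorial bookkeeping in the inductive step: verifying that every prefix of $w^{(i)}w_\epsilon^\infty$ permitted by the invariant (including those whose last few digits form a partial copy of $w_\epsilon$) is indeed annihilated by $\sigma^T$, and that the truncation from the infinite $v$ produced by Lemma~\ref{lem_completely} to a finite $V_{i+1}$ preserves a long enough initial segment of $w^{(i+1)}w_\epsilon^\infty$ under the action of the suitably alternating-shifted map $\sigma^{(i+1)T}_{W_i}$. Both facts rest on the alternating property of $\sigma$ and on Eq.~\eqref{alterneras}, applied carefully modulo $k$ to account for any trailing partial block.
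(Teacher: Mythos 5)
Your proof is correct and follows essentially the same strategy as the paper's: a Bowen--Dinaburg separated-set estimate built from a horseshoe that spreads the $2^L$ cylinders of length $L$ over one another in a uniformly bounded number $T\approx\max_{|w|=L}\epsilon(w)$ of iterates, with the appended copy of $w_\epsilon$ guaranteeing $2^{-L-1}$-separation even of adjacent cylinders, so that $h(f_\sigma)\ge L\log 2/T\to\infty$. The differences are cosmetic: you synchronize return times by letting the images sit inside $w^{(i)}w_\epsilon^\infty$ until time $(i+1)T$ (using that an emptied block stays empty) rather than padding each word to exact vanishing order via Lemma \ref{order_} and Corollary \ref{fillepsilon}, and you index the separated points directly by their itineraries rather than through the paper's nested sets $S^{i_1\cdots i_j}$.
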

\begin{proof}
\noindent For every $w\in\{0,1\}^+$, set $F({\left| w \right|})=\max_{u\in\{0,1\}^{|w|}} \varepsilon(u) $. For $x_1,x_2 \in \mathbb{I}$ let us introduce the metric:
\begin{equation} \label{N_metric}
d_n(x_1,x_2) \coloneqq \max\{\left| f_\sigma^i(x_1)-f_\sigma^i(x_2)\right| : 0\le i \le n\}
\end{equation}
We say that a subset $S\subset \mathbb{I}$ is $(n,\varepsilon)$-separated in the metric $d_n$ if for all $x_1,x_2\in S$,  $x_1 \neq x_2$ we have that $d_N(x_1,x_2)\ge \varepsilon$ and we indicate by $\left| (n,\epsilon) \right|$ the maximum cardinality of an $(n,\varepsilon)$-separeted set.  We recall that the topological entropy $h$ of the map $f_\sigma$ can be written as (\cite{dinaburg1970correlation,bowen1971entropy}):
\begin{equation*}
h(f_\sigma)=\lim_{\varepsilon \to 0} \left( \limsup_{n \to \infty} \frac{1}{n} \log \left| (n,\varepsilon) \right| \right)
\end{equation*}

\noindent Fix now $\varepsilon=2^{-(k+1)}$. 
\begin{enumerate}
\item[\textit{Step 1.}] If $f_\sigma^t(x_1)\in [w_{(i)}\overline{w_\epsilon}]$ and $f_\sigma^t(x_2)\in [w_{(r)}\overline{w_\epsilon}]$,  for some $0\le t\le N$ ($i\neq r$), then $d_N(x_1,x_2)>\epsilon$. Indeed, we have the smallest possible difference between $f_\sigma^t(x_1)$ and $f_\sigma^t(x_2)$ when $w_{(i)}$ and $w_{(r)}$ differ at the $k$-th digit and $f_\sigma^t(x_1)=0.w_{(i)}\overline{w_\epsilon} 0^\infty\ ,\ f_\sigma^t(x_2)=0.w_{(r)}\overline{w_\epsilon} 1^\infty$ (or vice-versa), so that we have $\left|f_\sigma^t(x_1)-f_\sigma^t(x_2) \right|\ge\left| \frac{1}{2^k}-\frac{1}{2^{2k}} \right|\ge \frac{1}{2\cdot 2^k}=\epsilon$.

\noindent Set $t=F(k)$.

\item[\textit{Step 2.}]
For any $i,r\in \{1,\ldots, 2^k\}$, there exists $x\in [w_{(i)}\overline{w_\epsilon}]$ such that $f_\sigma^t(x)\in [w_{(r)}\overline{w_\epsilon} q^{(r)}]$ with $\sigma_{w_{(r)}\overline{w_\epsilon}}q^{(r)}=\epsilon$.

\noindent Indeed, let us fix $i,r\in \{1,\ldots, 2^k\}$.  Since by assumption $\epsilon(w) \le F(|w|)$ for every $w\in\{0,1\}^*$, we have $\epsilon(w_{(i)})=\epsilon(w_{(i)}\overline{w_{\epsilon}})\le t$.  By Lemma \ref{order_}, there exists a word $p\in \{0,1\}^*$ such that $\epsilon(p)=t-\epsilon(w_{(i)}\overline{w_\epsilon})$, and by Corollary \ref{fillepsilon}, there is $q\in\{0,1\}^*$ such that $\sigma^{\epsilon(w_{(i)}\overline{w_\epsilon})}(w_{(i)}\overline{w_\epsilon} q)=pp'$, with $p'\in\{0,1\}^*$ such that $\epsilon(pp')=\epsilon(p)$, and therefore $\epsilon(w_{(i)}\overline{w_\epsilon} q)=t$. Then the thesis follows by applying Lemma \ref{lem_strongly}, where $w=w_{(i)}\overline{w_\epsilon} q$, $u=w_{(r)}\overline{w_\epsilon}$ and $h=t$.

\item[\textit{Step 3.}] We will define now a family of sets $\{S^i \subset [w_{(i)}\overline{w_\epsilon}]\}_{i=1,\dots,2^k}$, each containing $2^{nk}$ ($n\in \mathbb{N}$) distinct points of $\mathbb{I}$, such that $\cup_{1\le i\le 2^k}S^i$ will be an $(n,\varepsilon)$-separated set.  We will indicate by $x_l^i$ ($l\in \{1,\ldots, 2^{nk}\}$) the points of $S^i$, where the subscript $l$ identifies their standard order as real numbers, and we will construct them iteratively, requesting properties which will be ensured by increasingly long prefixes of their binary expansion. By Step 2.,  for $i_1 \in\{1,\ldots, 2^k\}$ the points $x_l^{i_1}$ can be taken such that:
\begin{equation} \label{step_1}
f_\sigma^t(x_l^{i_1})\in S^{i_2} \quad \text{for } l\in \{(i_2-1)2^{(n-1)k}+1,\ldots, i_22^{(n-1)k}\}
\end{equation}
where $i_2\in \{1,\ldots, 2^k\}$.  Hence we can define the set 
$$S^{i_1i_2}=\{x:x\in S^{i_1}, f_\sigma^t(x)\in S^{i_2}\}$$
and we have $\left| S^{i_1i_2} \right|=2^{(n-1)k}$. Moreover, notice that after further $t$ iterates of $f_\sigma$, the prefix $w_{i_2}\overline{w_\epsilon}$ of every point $f_\sigma^t(x_l^{i_1})$ is mapped again in the empty word.  Therefore for every $i_1,i_2\in \{1,\ldots, 2^k\}$ and for $l\in \{(i_2-1)2^{(n-1)k}+1,\ldots, i_22^{(n-1)k}\} $,  Step 2. allows us to have:
\begin{equation} \label{step_2}
f_\sigma^{2t}(x_l^{i_1i_2})\in S^{i_3} \quad \text{for } l\in \{(i_3-1)2^{(n-2)k}+1,\ldots, i_32^{(n-2)k}\}
\end{equation}
where $\{x^{i_1i_2}_l \}_{l=1,\ldots, 2^{(n-1)k}}\in S^{i_1 i_2}$, the subscript $l$ indicates their standard order as reals and $i_3 \in \{1,\ldots,2^k \}$.  Hence we can define the set 
$$S^{i_1 i_2 i_3}=\{x:x\in S^{i_1}, f_\sigma^t(x)\in S^{i_2}, f_\sigma^{2t}(x)\in S^{i_3}\}$$ 
and we have $\left| S^{i_1 i_2 i_3} \right|=2^{(n-2)k}$. We can proceed in this way for further $n-3$ steps in order to obtain that 
\begin{equation} \label{step_t}
f_\sigma^{jt}(x_l^{i_1\ldots i_j})\in S^{i_{j+1}} \quad \text{for } l\in \{(i_{j+1}-1)2^{(n-j)k}+1,\ldots, i_{j+1}2^{(n-j)k}\}
\end{equation}
for all $j\in \{1,\ldots,n-1\}$,  where $\{x^{i_1\ldots i_j}_l\}_{l=1,\ldots, 2^k}\in S^{i_1 \ldots i_j}$, the subscript $l$ indicates their standard order as reals and $i_{j+1}\in \{1\ldots,2^k\}$.  We finally arrive at the set 
$$S^{i_1\ldots i_{j+1}}=(\{x: x\in S^{i_1}, f_\sigma^t(x)\in S^{i_2}, \ldots, f_\sigma^{jt}(x)\in S^{i_{j+1}}\})_{j\in \{1,\ldots,n-1\}}$$
such that $\left| S^{i_1\ldots i_{j+1}} \right|=2^k$.

\item[\textit{Step 4.}] Recalling Step 1., the last argument implies that $\bigcup_{i\in \{1,\ldots, 2^k\}} S^i$ is an $(nt,\epsilon)$-separated set, so that $\left|(nt,\varepsilon) \right| \ge 2^{(n+1)k}$. From this it follows that
\begin{equation}
h(f_\sigma)\ge \lim_{k\to +\infty}\limsup_{n\to\infty} \frac{\log 2^{(n+1)k}}{(n+1)t}=\lim_{k\to +\infty} \frac{k\log 2}{F(k)}
\end{equation}
which immediately implies the thesis.
\end{enumerate}
\end{proof}
\noindent Theorem \ref{entrop} can be applied to the model case map $f_{\sigma_3}$, where $\sigma_3$ is defined in Section \ref{prelim}. Indeed, since, as observed before, all odd-indexed elements of $w$ go to $\epsilon$ in at most two iterations, the asymptotic behavior of $\epsilon(w)$ is logarithmic (for the optimal bound see \cite{dellacorte2021simplest}).

\noindent Attention has been devoted to the points around which the entropy concentrates, i.e. \textit{entropy points} in the sense of \cite{ye2007entropy}. We recall that an entropy point is a point $x$ such that the topological entropy restricted to any of its closed neighborhoods $K$ is positive, in symbols $h(f_\sigma, K)>0$. In case it always coincides with the entropy of the map on the whole space, the point is called a full entropy point. It is known that, in case of continuous maps on a compact metric space, every point is a full entropy point if the system is minimal and has positive topological entropy. When $\sigma$ is completely erasing, the system $(\mathbb{I},f_\sigma)$ is not minimal (there are periodic points). Still, we have the following
\begin{thm}
Suppose that $\sigma$ is completely erasing and verifies the optimality condition \ref{o.c}. If Eq. \eqref{limepsilon} holds, then every point $x\in\mathbb{I}$ is a full entropy point for $f_\sigma$. 
\end{thm}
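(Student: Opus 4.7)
The plan is to reduce to showing $h(f_\sigma, [w]) = \infty$ for every cylinder $[w]$ with $w \in \{0,1\}^+$. This suffices because Theorem \ref{entrop} already gives $h(f_\sigma) = \infty$, and because every closed neighborhood $K$ of any $x \in \mathbb{I}$ contains some such cylinder, whence $h(f_\sigma, K) \geq h(f_\sigma, [w])$. The idea is then to replay the construction in the proof of Theorem \ref{entrop} while keeping all separated points inside $[w]$.

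Fix $w \in \{0,1\}^+$ and retain the notation of the proof of Theorem \ref{entrop}: $t = F(k)$ and $\varepsilon = 2^{-(k+1)}$. Set $s = \epsilon(w)$. I would first establish a \emph{transport step}: for every $i \in \{1, \dots, 2^k\}$ one can find words $v^{(i)} \in \{0,1\}^*$, all of a common length, and companion words $q^{(i)}$, such that $\sigma^s(w v^{(i)}) = w_{(i)} \overline{w_\epsilon} q^{(i)}$ with $\sigma_{w_{(i)} \overline{w_\epsilon}}(q^{(i)}) = \epsilon$ and $\epsilon(w_{(i)} \overline{w_\epsilon} q^{(i)}) = t$. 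This follows by combining Corollary \ref{fillepsilon} (to reach the prefix $w_{(i)} \overline{w_\epsilon}$ from $[w]$ after $s$ iterates) with Lemma \ref{order_} and a further use of Corollary \ref{fillepsilon} (exactly as in Step 2 of the proof of Theorem \ref{entrop}, to calibrate $q^{(i)}$ so that its relative vanishing order is $t$); the uniformity $|v^{(i)}|$ the same for all $i$ is obtained by inserting suitable powers of $w_\epsilon$, which are dynamically inert under $\sigma^s$.

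With the transport step in hand, Steps 2--3 of the proof of Theorem \ref{entrop} go through verbatim inside $[w]$: iteratively, for each sequence $(i_1, \dots, i_n) \in \{1, \dots, 2^k\}^n$, one builds a point of $[w]$ whose orbit at the synchronized times $s, s+t, \dots, s + (n-1)t$ visits the prescribed cylinders $[w_{(i_1)} \overline{w_\epsilon}], \dots, [w_{(i_n)} \overline{w_\epsilon}]$. Step 1 of that same proof then yields $2^{nk}$ points of $[w]$ that are $(s + nt, \varepsilon)$-separated, so that
\begin{equation*}
h(f_\sigma, K) \geq h(f_\sigma, [w]) \geq \lim_{k \to \infty} \limsup_{n \to \infty} \frac{\log 2^{nk}}{s + nt} = \lim_{k \to \infty} \frac{k \log 2}{F(k)} = \infty
\end{equation*}
by hypothesis \eqref{limepsilon}, since $s = \epsilon(w)$ is chosen together with $w$ and is therefore independent of $n$ and $k$.

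The main delicate point is the uniform synchronization across the $2^k$ branches of the transport step: we need both $|v^{(i)}|$ independent of $i$ (so that Step 2 of the entropy proof starts simultaneously on every branch) and $s$ independent of $k$ (so that the additive offset $s$ gets absorbed in the $\limsup$ in $n$ and vanishes in the $k \to \infty$ limit). The former is handled by the $w_\epsilon$-padding; the latter is automatic, because the cylinder $[w]$ is fixed before $k$ is optimized over.
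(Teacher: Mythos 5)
Your proposal is correct and follows essentially the same route as the paper: reduce to a cylinder $[w]\subset K$, use the completely erasing property (via Corollary \ref{fillepsilon}, Lemma \ref{order_} and the mixing-type fact that $f_\sigma^{\epsilon(w)}([w])=\mathbb{I}$) to transport $[w]$ onto the separated sets of the proof of Theorem \ref{entrop}, and observe that the additive time offset $\epsilon(w)$, fixed with $K$ before optimizing in $k$ and $n$, is harmless in the limit — the paper writes this as the bound $k\log 2/(F(k)+\epsilon(w))$, while you absorb the offset in the $\limsup$ over $n$, which is the same estimate. The only cosmetic slip, inherited from the paper's own Step 2, is stating simultaneously $\sigma_{w_{(i)}\overline{w_\epsilon}}(q^{(i)})=\epsilon$ and $\epsilon(w_{(i)}\overline{w_\epsilon}q^{(i)})=t$ (the former would force the vanishing order to equal $\epsilon(w_{(i)}\overline{w_\epsilon})$); what is actually needed, and what your calibration via Lemma \ref{order_} delivers, is just $\epsilon(w_{(i)}\overline{w_\epsilon}q^{(i)})=t$.
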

\begin{proof}
\noindent Take any $x\in\mathbb{I}$. For every closed neighborhood $K$ of $x$, there is $w\in\{0,1\}^*$ such that $\widetilde{x}=wv$ for some $v\in\{0,1\}^\omega$, and that $[w]\subset K$. If $\sigma$ is completely erasing and verifies the optimality condition, by Theorem \ref{mixing} we have $f_\sigma^{\epsilon(w)}([w])=\mathbb{I}$. Therefore, there exists $S^0\subset [w]$ such that $f_\sigma^{\epsilon(w)}(S^0)=\cup_{i} S^i$ (where $S^i$ are the same as in the previous proof). Then the same argument as above implies the following bounds:
\begin{equation}
h(f_\sigma, K)\ge h(f_\sigma, [w])\ge\lim_{k\to +\infty} \frac{k\log 2}{F(k)+\epsilon(w)}=\infty
\end{equation}
\end{proof}

\noindent In \cite{pawlak2009entropy}, the author introduces a powerful concept which is useful when dealing with the dynamical properties of Darboux, Baire-1 interval maps, i.e. that of \textit{almost fixed points}. A point $x\in\mathbb{I}$ is an almost fixed point for a map $f:\mathbb{I}\to\mathbb{I}$ if it belongs to the  topological interior (in the space $\mathbb{I}$ with the natural topology) of at least one of the two sets:
\begin{equation*}
R^-(f,x)=\{y:f^{-1}(y)\cap(x-\varepsilon,x)\ne\emptyset\  \forall \varepsilon>0\},
\end{equation*}
\begin{equation*}
R^+(f,x)=\{y:f^{-1}(y)\cap (x,x+\varepsilon)\ne\emptyset\  \forall \varepsilon>0\}
\end{equation*}
Among the results of \cite{pawlak2009entropy} there is that, for Darboux, Baire-1 interval maps, the existence of at least one almost fixed point has strong dynamical consequences, as it implies infinite topological entropy as well as periodic points of every period; moreover, in the same assumptions, a fixed points exists in any open neighborhood of an almost fixed point. 
Notice that no continuous and even no regulated interval map can have almost fixed points, so that, in a sense, this concept distinguishes highly irregular interval dynamics. Here we limit ourselves to establish two very simple results linking the concept of almost fixed point to the framework of erasing substitutions.
\begin{lem}
If $\sigma$ is a $k-$block substitution, then the map $f_\sigma$ admits an almost fixed point only if $\sigma$ is erasing.
\end{lem}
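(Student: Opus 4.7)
The plan is to prove the contrapositive: if no block of length $k$ is mapped by $\sigma$ to $\epsilon$, then no $x\in\mathbb{I}$ is an almost fixed point for $f_\sigma$. My strategy is to show that in this non-erasing case $f_\sigma$ is regulated, i.e.\ both one-sided limits $\lim_{z\to x^\pm}f_\sigma(z)$ exist as single real numbers at every $x\in\mathbb{I}$. This immediately forces each of $R^-(f_\sigma,x)$ and $R^+(f_\sigma,x)$ to be contained in a singleton, hence to have empty interior in $\mathbb{I}$, ruling out the almost fixed point condition and in particular confirming the remark preceding the Lemma that no regulated interval map can admit almost fixed points.

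The essential observation is that the non-erasing hypothesis gives $|\sigma(w)|\geq 1$ for every $w\in\{0,1\}^k$, and therefore $|\sigma(u)|\geq |u|/k$ for every $u\in\{0,1\}^{nk}$. For each $x\in\mathbb{I}$ I would introduce the right-hand expansion $x^+\in\{0,1\}^\omega$ given by $x^+=\widetilde{x}$ if $x\notin\mathcal{Q}_2^0$, by $x^+=v10^\infty$ if $x=0.v01^\infty\in\mathcal{Q}_2$, and by $x^+=0^\infty$ when $x=0$. For every $m\in\mathbb{N}$ and every $z\in(x,x+2^{-mk})\cap\mathbb{I}$, the defining property of the non-terminating expansion forces $\widetilde{z}$ to agree with $x^+$ on its first $mk$ digits. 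Applying $\sigma$ block by block then yields $\sigma(\widetilde{z}_1\ldots\widetilde{z}_{mk})=\sigma(x_1^+\ldots x_{mk}^+)$, a word of length at least $m$, so that $|f_\sigma(z)-0.\sigma(x^+)|\leq 2^{-m+1}$. Letting $m\to\infty$ gives $\lim_{z\to x^+}f_\sigma(z)=0.\sigma(x^+)$; the symmetric argument based on the left-hand expansion produces the left-hand limit.

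The endpoints $x=0,1$ are handled trivially, since there only one of the two one-sided neighborhoods of $x$ meets $\mathbb{I}$ nontrivially and the opposing $R^\pm(f_\sigma,x)$ is vacuously empty. I do not foresee any serious obstacle: the main conceptual step is simply to isolate the non-erasing hypothesis as an asymmetric ``expansion of length'' that makes $f_\sigma$ well-behaved at every real, even at the dyadic rationals, where its natural left and right limits may disagree but each remains single-valued.
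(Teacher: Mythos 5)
Your proposal is correct and follows essentially the same route as the paper, whose proof is exactly the observation that a non-erasing $\sigma$ makes $f_\sigma$ regulated (one-sided limits exist everywhere, via $|\sigma(u)|\ge |u|/k$), so that $R^-(f_\sigma,x)$ and $R^+(f_\sigma,x)$ are contained in singletons and have empty interior; you merely supply the details the paper leaves implicit. One small repair: for $x\notin\mathcal{Q}_2^0$ (or for dyadic $x$ with $mk$ smaller than the length of the terminating expansion) it is not true that \emph{every} $z\in(x,x+2^{-mk})$ shares the first $mk$ digits with $x^+$, since $x$ may lie near the right endpoint of its depth-$mk$ cylinder; one should instead take $z\in(x,x+\delta)$ with $\delta>0$ small enough that $(x,x+\delta)$ lies inside that cylinder, which does not affect the limit statement.
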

\begin{proof}
If $\sigma$ is not erasing, then the map $f_\sigma$ admits right and left limits, say $x^r$ and $x^l$, at every $x\in\mathbb{I}$ (and they coincide outside $\mathbb{Q}_2$). In this case, the topological interior of $R^-(f_\sigma,x)$ and $R^+(f_\sigma,x)$ is empty.
\end{proof}
\begin{lem}
If $\sigma$ is an erasing $k$-block substitution verifying the optimality condition \ref{o.c}, then the map $f_\sigma$ admits at least one almost fixed point, namely $x_0=0.w_\epsilon^\infty$.
\end{lem}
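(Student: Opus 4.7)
The plan is to locate an open interval around $x_0=0.w_\epsilon^\infty$ whose points all admit $f_\sigma$-preimages arbitrarily close to $x_0$ from one fixed side, thereby placing $x_0$ in the $\mathbb{I}$-interior of either $R^+(f_\sigma,x_0)$ or $R^-(f_\sigma,x_0)$. The core observation is that if $z$ is slightly greater than $x_0$, then $\widetilde{z}$ begins with many copies of $w_\epsilon$ followed by a $k$-block $w'>w_\epsilon$ lex, so that $f_\sigma(z)\in[v_{(w')}]$; symmetrically for $z<x_0$. The optimality condition then allows every $y$ in the corresponding cylinder to be realized as such an $f_\sigma(z)$.

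Focus first on the generic case $w_\epsilon\notin\{0^k,1^k\}$. Apply the optimality condition to the word $w_\epsilon^\infty$ to obtain indices $h_1,h_2,\ldots$ with $v_{(h_i)}\ne\epsilon$ and $w_\epsilon^\infty=\prod_i v_{(h_i)}$. Set $w':=w_{(h_1)}\ne w_\epsilon$; then $v_{(h_1)}=\sigma(w')$ is a nonempty prefix of $w_\epsilon^\infty$, so $x_0\in[v_{(h_1)}]$. Because $w_\epsilon$ contains both a $0$ and a $1$, the tail of $\widetilde{x_0}$ beyond $v_{(h_1)}$ is neither $0^\infty$ nor $1^\infty$, so $x_0$ lies in the open interval $\mathrm{int}_{\mathbb{I}}[v_{(h_1)}]=(0.v_{(h_1)},\,0.v_{(h_1)}+2^{-|v_{(h_1)}|})$, an open neighborhood of $x_0$ in $\mathbb{I}$.

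Next compare $w'$ with $w_\epsilon$ lexicographically; assume $w'>w_\epsilon$ (the other case is symmetric and yields $R^-$). I would then show $\mathrm{int}_{\mathbb{I}}[v_{(h_1)}]\subseteq R^+(f_\sigma,x_0)$. Fix $y$ in this interval, write $\widetilde{y}=v_{(h_1)}r$, and apply the optimality condition to $r$ to obtain $u\in\{0,1\}^\omega$, expressed as a concatenation of $k$-blocks all different from $w_\epsilon$, with $\sigma(u)=r$. Set $z_N:=0.w_\epsilon^N w'u$, inserting extra $w_\epsilon$ blocks at $k$-block positions within the tail of $u$ if necessary to ensure $\widetilde{z_N}$ contains infinitely many $1$s (feasible because $w_\epsilon\ne 0^k$). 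Then $\widetilde{z_N}$ is the canonical expansion of $z_N$, its first $Nk$ digits match those of $\widetilde{x_0}$, and the next block $w'>w_\epsilon$ gives $z_N\in(x_0,\,x_0+2^{-Nk})$; moreover $f_\sigma(z_N)=0.v_{(h_1)}r=y$. Letting $N\to\infty$ yields $y\in R^+(f_\sigma,x_0)$, so $x_0\in\mathrm{int}_{\mathbb{I}}[v_{(h_1)}]\subseteq\mathrm{int}(R^+(f_\sigma,x_0))$. The degenerate case $w_\epsilon=0^k$, where $x_0=0$ is a boundary point of $\mathbb{I}$, is handled directly: for every $y\in\mathbb{I}\setminus\{0\}$, optimality supplies $u$ (automatically not ending in $0^\infty$, since each of its $k$-blocks differs from $0^k=w_\epsilon$ and therefore contains a $1$) with $\sigma(u)=\widetilde{y}$, and $z_N:=0.0^{Nk}u$ produces preimages of $y$ tending to $0^+$.

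The principal technical obstacle will be the treatment of dyadic-rational $y\in\mathrm{int}_{\mathbb{I}}[v_{(h_1)}]$: the canonical $\widetilde{y}$ ends in $1^\infty$, yet the OC-derived $u$ with $\sigma(u)=r$ may end in $0^\infty$, so that the naive $z_N=0.w_\epsilon^N w'u$ is a dyadic rational whose canonical expansion is not the one written, which would corrupt the identity $f_\sigma(z_N)=y$. The $w_\epsilon$-padding trick resolves this: since $w_\epsilon$ contains a $1$ whenever $w_\epsilon\ne 0^k$, interleaving $w_\epsilon$ blocks into the tail of $u$ injects infinitely many $1$s into $\widetilde{z_N}$ (forcing it to be the canonical expansion) while leaving the $\sigma$-image unchanged because each inserted block is erased.
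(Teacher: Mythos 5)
Your argument is correct and is essentially the paper's own proof, spelled out in more detail: the paper likewise takes an OC-preimage $v$ of $w_\epsilon^\infty$, compares its first $k$-block with $w_\epsilon$ lexicographically, uses points of the form $0.w_\epsilon^m v$ (your $z_N$) as nearby preimages, and concludes that the whole cylinder $[\sigma(v_1\dots v_k)]$ (your $[v_{(h_1)}]$) lies in $R^+(f_\sigma,x_0)$ or $R^-(f_\sigma,x_0)$, with the endpoint cases absorbed by taking interiors in $\mathbb{I}$. The one small omission is in your $w_\epsilon=0^k$ case: to put $0$ in the interior of $R^+(f_\sigma,0)$ you also need $0\in R^+(f_\sigma,0)$ itself, which follows by the same construction applied to the expansion $0^\infty$ of $y=0$ (OC gives $u$ built from non-$0^k$ blocks with $\sigma(u)=0^\infty$, so the points $0.0^{Nk}u\to 0^+$ are preimages of $0$).
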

\begin{proof}
For every $\varepsilon>0$, there is a positive integer $m$ so large that every point in $[w_\epsilon^m]$ is less than $\varepsilon$ apart from $x_0$. By the optimality condition, there is $v\in\{0,1\}^\omega$ such that $\sigma(v)=\sigma(w_\epsilon^m v)=w_\epsilon^\infty$. We can suppose that $v_1\dots v_k$ is strictly larger than $w_\epsilon$ in lexicographic order (the argument is analogous if it is strictly smaller), so that $x_0\in R^+(f_\sigma,x_0)$. Since the optimality condition also ensures that $[\sigma(v_1\dots v_k)]$ belongs to $R^+(f_\sigma,x_0)$, the point $x_0$ belongs to its topological interior. Notice that this also applies if $w_\epsilon=0^k$ or $1^k$, as the interior is meant with respect to the natural topology on $\mathbb{I}$.
\end{proof}
\noindent Notice that, by Remark \ref{fixedd}, when $\sigma$ is completely erasing, in any open neighborhood of $0.w_\epsilon^\infty$ there is a fixed point for $f_\sigma$.

\noindent It seems no coincidence that (some of) the properties guaranteed by the existence of an almost fixed point in the Darboux case are verified also for the maps $f_\sigma$ when $\sigma$ is completely erasing. This seems to suggest that the concept of almost fixed point has a dynamical significance also for Baire-1, non-Darboux interval maps. On the other hand, the fact that a further assumption is used herein on $\epsilon(\cdot)$ to get infinite entropy probably means that there is no straightforward strengthening of the results of  \cite{pawlak2009entropy}, but rather some suitable weakening of the Darboux property has to be invoked/introduced.
\vspace{0.3cm}

\begin{center}
\begin{tabular}{| p{4.5cm} p{0.3cm} p{7.5cm}|}
\hline
\vspace{13mm}\center{Erasing + OC} &\vspace{11mm} \center{$\implies$} & 
\begin{itemize}
\begin{footnotesize}
 \item $f_\sigma$ is continuous (non uniformly) on a set $\mathcal{C}^\sigma$ such that $\left\vert\mathbb{I}\setminus\mathcal{C}^\sigma\right\vert=\aleph_0$
 \item $f_\sigma^{-1}(x)$ is uncountable for every $x\in\mathbb{I}\setminus\mathcal{Q}_2^0$
 \item $f_\sigma$ is not bi-measurable
 \item $f_\sigma$ is Baire-1 and (in general) not Darboux
 \end{footnotesize}
   \end{itemize}\\
\hline
\vspace{12mm}\center{Strongly erasing + OC} & \vspace{10mm}\center{$\implies$} & 
\begin{itemize}
\begin{footnotesize}
\item The set of points with a dense $f_\sigma$-orbit is 

uncountable and dense in $\mathbb{I}$
\item $f_\sigma$ has 1/2-sensitive dependence on initial 

conditions
\item $f_\sigma$ is topologically mixing
\end{footnotesize}
\end{itemize}\\
\hline
\vspace{12mm}\center{Completely erasing + OC} & \vspace{10mm}\center{$\implies$} &
\begin{itemize}
\begin{footnotesize}
\item $f_\sigma$  exhibits Devaney chaos
\item $f_\sigma$ exhibits  Li-Yorke chaos
\item $f_\sigma$ has infinite topological entropy as soon as $\epsilon(\cdot)$ is asymptotically sublinear
\item $f_\sigma$ has almost fixed points
\end{footnotesize}
\end{itemize}\\
\hline
\vspace{3mm}\center{Boundedly erasing} & \vspace{1mm}\center{$\implies$} & 
\begin{itemize}
\begin{footnotesize}
\item $f_\sigma$ has trivial dynamics: there is $n$ such that 0 attracts in $n$ iterates every $x\in\mathbb{I}$ whose orbit does not intersect $\mathcal{Q}_2$
\end{footnotesize}
\end{itemize}\\
\hline
\end{tabular}
\end{center}
\vspace{0.15cm}
\textbf{Summary of the relations between erasing class of $\sigma$ and properties of $f_\sigma$.}
\vspace{0.3cm}

\section{Some further problems}
\label{furtherr}
\noindent Some possible generalizations of the questions addressed in this paper appear natural from either an analytical or a dynamical point of view. First of all, if we drop the assumption of uniqueness of the sequence $(h_i)$ made in Lemma \ref{limit_}, then the topological structure of the fibers becomes more intricate, because in general a fiber can be an uncountable union of sets each of which has a Cantor closure. It is not clear how this can affect the dynamics of the map $f_\sigma$. More generally, one can ask which are the minimal assumptions on an erasing substitution $\sigma$ to obtain the same dynamical properties proved here in the completely erasing case. 

\noindent Some natural questions also arise from the following simple argument: by construction, $f_\sigma(0.w)=f_\sigma(0.w_\epsilon w)$, so that the functional relation $f_\sigma(x)=f_\sigma\left(x2^{-k}+0.w_\epsilon\right)$ always holds. This induces a fractal structure in the graph of $f_\sigma$, because its restriction to $[w_\epsilon]$ is a horizontal compression plus a translation (the latter unless $w_\epsilon=0^k$) of the whole graph. Since a fractal structure arises, it seems natural to ask what is the link between the substitution $\sigma$ and the Hausdorff dimension of the graph of $f_\sigma$. We point out that the estimate of the Hausdorff dimension of the graph is not trivial even in the model case represented by the substitution $\sigma_3$ defined in Section \ref{prelim} (see \cite{dellacorte2021simplest}).




\bibliographystyle{elsarticle-num-names}
\bibliography{biblio_erasing_interval}

\begin{thebibliography}{26}
\expandafter\ifx\csname natexlab\endcsname\relax\def\natexlab#1{#1}\fi
\providecommand{\url}[1]{\texttt{#1}}
\providecommand{\href}[2]{#2}
\providecommand{\path}[1]{#1}
\providecommand{\DOIprefix}{doi:}
\providecommand{\ArXivprefix}{arXiv:}
\providecommand{\URLprefix}{URL: }
\providecommand{\Pubmedprefix}{pmid:}
\providecommand{\doi}[1]{\href{http://dx.doi.org/#1}{\path{#1}}}
\providecommand{\Pubmed}[1]{\href{pmid:#1}{\path{#1}}}
\providecommand{\bibinfo}[2]{#2}
\ifx\xfnm\relax \def\xfnm[#1]{\unskip,\space#1}\fi
\bibitem[{Fogg(2002)}]{fogg2002substitutions}
\bibinfo{author}{N.~P. Fogg}, \bibinfo{title}{Substitutions in dynamics,
  arithmetics and combinatorics}, \bibinfo{publisher}{Springer Science \&
  Business Media}, \bibinfo{year}{2002}.
\bibitem[{Durand(2010)}]{durand2010cobham}
\bibinfo{author}{F.~Durand},
\newblock \bibinfo{title}{Cobham's theorem for substitutions},
\newblock \bibinfo{journal}{arXiv preprint arXiv:1010.4009}
  (\bibinfo{year}{2010}).
\bibitem[{Durand and Rigo(2009)}]{durand2009syndeticity}
\bibinfo{author}{F.~Durand}, \bibinfo{author}{M.~Rigo},
\newblock \bibinfo{title}{Syndeticity and independent substitutions},
\newblock \bibinfo{journal}{Advances in Applied Mathematics}
  \bibinfo{volume}{42} (\bibinfo{year}{2009}) \bibinfo{pages}{1--22}.
\bibitem[{Reidenbach and Schneider(2011)}]{reidenbach2011restricted}
\bibinfo{author}{D.~Reidenbach}, \bibinfo{author}{J.~C. Schneider},
\newblock \bibinfo{title}{Restricted ambiguity of erasing morphisms},
\newblock \bibinfo{journal}{Theoretical Computer Science} \bibinfo{volume}{412}
  (\bibinfo{year}{2011}) \bibinfo{pages}{3510--3523}.
\bibitem[{Corte et~al.(1674)Corte, Isola, and
  Piergallini}]{dellacorte2021simplest}
\bibinfo{author}{A.~D. Corte}, \bibinfo{author}{S.~Isola},
  \bibinfo{author}{R.~Piergallini}, \bibinfo{title}{The simplest erasing
  substitution}, \bibinfo{year}{2021, http://arxiv.org/abs/2102.01674}.
\bibitem[{Korczak-Kubiak et~al.(2015)Korczak-Kubiak, Loranty, and
  Pawlak}]{korczak2015}
\bibinfo{author}{E.~Korczak-Kubiak}, \bibinfo{author}{A.~Loranty},
  \bibinfo{author}{R.~J. Pawlak},
\newblock \bibinfo{title}{On the topological entropy of discontinuous
  functions. {S}trong entropy points and {Z}ahorski classes},
\newblock \bibinfo{journal}{Monograph on the Occasion of 100th Birthday
  Anniversary of Zygmunt Zahorski}  (\bibinfo{year}{2015})
  \bibinfo{pages}{109--124}.
\bibitem[{Natkaniec and Szuca(2010)}]{natkaniec2010pawlak}
\bibinfo{author}{T.~Natkaniec}, \bibinfo{author}{P.~Szuca},
\newblock \bibinfo{title}{On pawlak's problem concerning entropy of almost
  continuous functions},
\newblock in: \bibinfo{booktitle}{Colloq. Math}, volume \bibinfo{volume}{121},
  \bibinfo{year}{2010}, pp. \bibinfo{pages}{107--111}.
\bibitem[{Loranty et~al.(2019)Loranty, Pawlak, and
  Terepeta}]{loranty2019almost}
\bibinfo{author}{A.~Loranty}, \bibinfo{author}{R.~J. Pawlak},
  \bibinfo{author}{M.~Terepeta},
\newblock \bibinfo{title}{On almost continuous functions and peculiar points},
\newblock \bibinfo{journal}{European Journal of Mathematics}
  \bibinfo{volume}{5} (\bibinfo{year}{2019}) \bibinfo{pages}{106--115}.
\bibitem[{Steele(2019)}]{steele2019}
\bibinfo{author}{T.~Steele},
\newblock \bibinfo{title}{Dynamics of {B}aire-1 functions on the interval},
\newblock \bibinfo{journal}{European Journal of Mathematics}
  \bibinfo{volume}{5} (\bibinfo{year}{2019}) \bibinfo{pages}{138--149}.
\bibitem[{Steele(2017)}]{steele2017}
\bibinfo{author}{T.~Steele},
\newblock \bibinfo{title}{Dynamics of typical {B}aire-1 functions on the
  interval},
\newblock \bibinfo{journal}{Journal of Applied Analysis} \bibinfo{volume}{23}
  (\bibinfo{year}{2017}) \bibinfo{pages}{59--64}.
\bibitem[{Steele(2018)}]{steele2018}
\bibinfo{author}{T.~Steele},
\newblock \bibinfo{title}{The space of $\omega$-limit sets for {B}aire-1
  functions on the interval},
\newblock \bibinfo{journal}{Topology and its Applications}
  \bibinfo{volume}{248} (\bibinfo{year}{2018}) \bibinfo{pages}{59--63}.
\bibitem[{Culik and Karhum{\"a}ki(1992)}]{culik1992iterative}
\bibinfo{author}{K.~Culik}, \bibinfo{author}{J.~Karhum{\"a}ki},
\newblock \bibinfo{title}{Iterative devices generating infinite words},
\newblock in: \bibinfo{booktitle}{Annual Symposium on Theoretical Aspects of
  Computer Science}, \bibinfo{organization}{Springer}, \bibinfo{year}{1992},
  pp. \bibinfo{pages}{529--543}.
\bibitem[{Van~Rooij and Schikhof(1982)}]{van1982second}
\bibinfo{author}{A.~C. Van~Rooij}, \bibinfo{author}{W.~H. Schikhof},
  \bibinfo{title}{A second course on real functions}, \bibinfo{publisher}{CUP
  Archive}, \bibinfo{year}{1982}.
\bibitem[{Korczak-Kubiak et~al.(2015)Korczak-Kubiak, Loranty, and
  Pawlak}]{korczak2015topological}
\bibinfo{author}{E.~Korczak-Kubiak}, \bibinfo{author}{A.~Loranty},
  \bibinfo{author}{R.~J. Pawlak},
\newblock \bibinfo{title}{On the topological entropy of discontinuous
  functions. strong entropy points and {Z}ahorski classes},
\newblock \bibinfo{journal}{Monograph on the Occasion of 100th Birthday
  Anniversary of Zygmunt Zahorski}  (\bibinfo{year}{2015})
  \bibinfo{pages}{109--124}.
\bibitem[{Niven et~al.(1951)Niven, Zuckerman et~al.}]{niven1951definition}
\bibinfo{author}{I.~Niven}, \bibinfo{author}{H.~S. Zuckerman}, et~al.,
\newblock \bibinfo{title}{On the definition of normal numbers},
\newblock \bibinfo{journal}{Pacific J. Math} \bibinfo{volume}{1}
  (\bibinfo{year}{1951}) \bibinfo{pages}{103--109}.
\bibitem[{Harman(2003)}]{harman2003one}
\bibinfo{author}{G.~Harman},
\newblock \bibinfo{title}{One hundred years of normal numbers},
\newblock in: \bibinfo{booktitle}{Surveys in Number Theory: Papers from the
  millennial conference on Number theory}, \bibinfo{year}{2003}, pp.
  \bibinfo{pages}{57--74}.
\bibitem[{Purves(1966)}]{purves1966bimeasurable}
\bibinfo{author}{R.~Purves},
\newblock \bibinfo{title}{Bimeasurable functions},
\newblock \bibinfo{journal}{Fundamenta Mathematicae} \bibinfo{volume}{58}
  (\bibinfo{year}{1966}) \bibinfo{pages}{149--157}.
\bibitem[{Banks et~al.(1992)Banks, Brooks, Cairns, Davis, and
  Stacey}]{banks1992devaney}
\bibinfo{author}{J.~Banks}, \bibinfo{author}{J.~Brooks},
  \bibinfo{author}{G.~Cairns}, \bibinfo{author}{G.~Davis},
  \bibinfo{author}{P.~Stacey},
\newblock \bibinfo{title}{On {D}evaney's definition of chaos},
\newblock \bibinfo{journal}{The American mathematical monthly}
  \bibinfo{volume}{99} (\bibinfo{year}{1992}) \bibinfo{pages}{332--334}.
\bibitem[{Vellekoop and Berglund(1994)}]{vellekoop1994intervals}
\bibinfo{author}{M.~Vellekoop}, \bibinfo{author}{R.~Berglund},
\newblock \bibinfo{title}{On intervals, transitivity= chaos},
\newblock \bibinfo{journal}{The American Mathematical Monthly}
  \bibinfo{volume}{101} (\bibinfo{year}{1994}) \bibinfo{pages}{353--355}.
\bibitem[{Huang and Ye(2002)}]{huang2002devaney}
\bibinfo{author}{W.~Huang}, \bibinfo{author}{X.~Ye},
\newblock \bibinfo{title}{Devaney's chaos or 2-scattering implies
  {L}i--{Y}orke's chaos},
\newblock \bibinfo{journal}{Topology and its Applications}
  \bibinfo{volume}{117} (\bibinfo{year}{2002}) \bibinfo{pages}{259--272}.
\bibitem[{Kuchta and Sm{\i}tal(1989)}]{kuchta1989two}
\bibinfo{author}{M.~Kuchta}, \bibinfo{author}{J.~Sm{\i}tal},
\newblock \bibinfo{title}{Two point scrambled set implies chaos},
\newblock in: \bibinfo{booktitle}{European Conference on Iteration Theory
  ECIT}, volume~\bibinfo{volume}{87}, \bibinfo{year}{1989}.
\bibitem[{Misiurewicz(2010)}]{misiurewicz2010horseshoes}
\bibinfo{author}{M.~Misiurewicz},
\newblock \bibinfo{title}{Horseshoes for continuous mappings of an interval},
\newblock in: \bibinfo{booktitle}{Dynamical systems},
  \bibinfo{publisher}{Springer}, \bibinfo{year}{2010}, pp.
  \bibinfo{pages}{125--135}.
\bibitem[{Dinaburg(1970)}]{dinaburg1970correlation}
\bibinfo{author}{E.~I. Dinaburg},
\newblock \bibinfo{title}{A correlation between topological entropy and metric
  entropy},
\newblock in: \bibinfo{booktitle}{Doklady Akademii Nauk}, volume
  \bibinfo{volume}{190}, \bibinfo{organization}{Russian Academy of Sciences},
  \bibinfo{year}{1970}, pp. \bibinfo{pages}{19--22}.
\bibitem[{Bowen(1971)}]{bowen1971entropy}
\bibinfo{author}{R.~Bowen},
\newblock \bibinfo{title}{Entropy for group endomorphisms and homogeneous
  spaces},
\newblock \bibinfo{journal}{Transactions of the American Mathematical Society}
  \bibinfo{volume}{153} (\bibinfo{year}{1971}) \bibinfo{pages}{401--414}.
\bibitem[{Ye and Zhang(2007)}]{ye2007entropy}
\bibinfo{author}{X.~Ye}, \bibinfo{author}{G.~Zhang},
\newblock \bibinfo{title}{Entropy points and applications},
\newblock \bibinfo{journal}{Transactions of the American Mathematical Society}
  \bibinfo{volume}{359} (\bibinfo{year}{2007}) \bibinfo{pages}{6167--6186}.
\bibitem[{Pawlak(2009)}]{pawlak2009entropy}
\bibinfo{author}{R.~J. Pawlak},
\newblock \bibinfo{title}{On the entropy of {D}arboux functions},
\newblock in: \bibinfo{booktitle}{Colloquium Mathematicum}, volume
  \bibinfo{volume}{116}, \bibinfo{organization}{Instytut Matematyczny Polskiej
  Akademii Nauk}, \bibinfo{year}{2009}, pp. \bibinfo{pages}{227--241}.

\end{thebibliography}




\end{document}